\documentclass[12pt]{amsart}
\vfuzz2pt 
\hfuzz2pt 
\addtolength{\textwidth}{+4cm} \addtolength{\textheight}{+2cm}
\hoffset-2cm \voffset-1cm
\setlength{\parskip}{5pt} 
\usepackage{amssymb,verbatim,url,mathrsfs,tikz,extarrows,enumerate}
\usepackage[pdfpagelabels]{hyperref}
\usepackage[nameinlink]{cleveref}
\usetikzlibrary{matrix,arrows}
\setlength{\parskip}{\smallskipamount}
\numberwithin{equation}{subsection}
\newtheorem{theorem}[subsection]{Theorem}
\newtheorem{corollary}[subsection]{Corollary}
\newtheorem{proposition}[subsection]{Proposition}
\newtheorem{lemma}[subsection]{Lemma}

\newtheoremstyle{mystyle}
  {}
  {}
  {}
  {0pt}
  {}
  {.}
  { }
  {\textbf{\thmname{#1}\thmnumber{ #2}}\thmnote{ (#3)}}
\theoremstyle{mystyle}
\newtheorem{definition}[subsection]{Definition}

\newtheorem{note}[subsection]{Note}

\newtheorem{remark}[subsection]{Remark}

\newtheoremstyle{pgstyle}
  {}
  {}
  {}
  {}
  {}
  {.}
  { }
  {\textbf{\thmname{#1}\thmnumber{#2}}\thmnote{ (#3)}}
\theoremstyle{pgstyle}
\newtheorem{pg}[subsection]{}

\newcommand{\mc}[1]{\mathcal{#1}}
\newcommand{\ms}[1]{\mathscr{#1}}

\newcommand{\mr}[1]{\mathrm{#1}}
\newcommand{\mb}[1]{\mathbf{#1}}
\newcommand{\on}[1]{\operatorname{#1}}

\newcommand{\et}{\mathrm{\acute et}}
\DeclareMathOperator{\Aut}{Aut}
\DeclareMathOperator{\Br}{Br}
\DeclareMathOperator{\GL}{GL}
\DeclareMathOperator{\SL}{SL}
\DeclareMathOperator{\PGL}{PGL}
\DeclareMathOperator{\Spec}{Spec}
\DeclareMathOperator{\Proj}{Proj}

\DeclareMathOperator{\Hom}{Hom}
\DeclareMathOperator{\id}{id}
\DeclareMathOperator{\Pic}{Pic}
\DeclareMathOperator{\Mat}{Mat}
\DeclareMathOperator{\Mor}{Mor}

\DeclareMathOperator{\im}{im}
\def\A{\mathbb{A}}
\def\Z{\mathbb{Z}}
\def\G{\mathbb{G}}
\def\H{\mathrm{H}}

\def\F{\mathbb{F}}
\def\N{\mathbb{N}}

\begin{document}
\title[$\Br \ms{M}_{1,1,k} = \Z/(2)$ for $k = \overline{k}$ and $\on{char} k = 2$]{The Brauer group of $\ms{M}_{1,1}$ over algebraically closed fields of characteristic $2$}
\author{Minseon Shin}
\email{shinms@math.berkeley.edu}
\urladdr{\url{http://math.berkeley.edu/~shinms}}
\begin{abstract} We prove that the Brauer group of the moduli stack of elliptic curves $\ms{M}_{1,1,k}$ over an algebraically closed field $k$ of characteristic $2$ is isomorphic to $\Z/(2)$. We also compute the Brauer group of $\ms{M}_{1,1,k}$ where $k$ is a finite field of characteristic $2$. \end{abstract}
\date{\today}
\maketitle

\section{Introduction} \label{sec-01}

Let $\ms{M}_{1,1,\Z}$ denote the moduli stack of elliptic curves over $\Z$. For any scheme $S$, we denote by $\ms{M}_{1,1,S} := S \times_{\Z} \ms{M}_{1,1,\Z}$ the restriction of $\ms{M}_{1,1,\Z}$ to the category of schemes over $S$. \par Antieau and Meier \cite[11.2]{ANTIEAU-MEIER-TBGOTMSOEC} computed the Brauer group $\Br \ms{M}_{1,1,S}$ for various base schemes $S$, and in particular proved that for any algebraically closed field $k$ of characteristic not $2$ the Brauer group $\Br \ms{M}_{1,1,k}$ is trivial. The purpose of this note is to compute $\Br \ms{M}_{1,1,k}$ in the characteristic $2$ case. This then completes the calculation of $\Br \ms{M}_{1,1,k}$ over algebraically closed fields $k$. We summarize the result in the following theorem.

\begin{theorem}[{\cite[11.2]{ANTIEAU-MEIER-TBGOTMSOEC}} in $\on{char} k \ne 2$] \label{48} Let $k$ be an algebraically closed field. Then $\Br \ms{M}_{1,1,k}$ is $0$ unless $\on{char} k = 2$, in which case $\Br \ms{M}_{1,1,k} = \Z/(2)$. \end{theorem}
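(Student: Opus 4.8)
The plan is to compute $\H^2_{\mathrm{fppf}}(\ms M_{1,1,k}, \G_m)$ and read off its torsion. Since $\ms M_{1,1,k}$ is a smooth separated Deligne--Mumford stack admitting a presentation $[X/G]$ by a quasi-affine scheme modulo a linear algebraic group, it has the resolution property, so $\Br \ms M_{1,1,k} = \Br' \ms M_{1,1,k} = \H^2(\ms M_{1,1,k}, \G_m)_{\mathrm{tors}}$; because $\G_m$ is smooth this agrees with its fppf counterpart, and it is the fppf topology that will expose the characteristic-$2$ class. I would arrange the computation so that the entire answer comes from a single gerbe.

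The structural input special to characteristic $2$ is that the hyperelliptic involution $[-1]$ becomes \emph{unipotent}: in a Weierstrass presentation $\ms M_{1,1,k} = [\A^5_\Delta/(\G_m \ltimes \G_a^3)]$ the change of coordinates $[-1]$ has $u = -1 = 1$, so it lies in the unipotent part $\G_a^3$ rather than in $\G_m$. Correspondingly the Hasse invariant $a_1$ is a modular form of weight $1$ (there is no such form when $\on{char} k \ne 2$, where the lowest weights are $4$ and $6$), and the graded ring of modular forms mod $2$ is $k[a_1, \Delta]$ with $j = a_1^{12}/\Delta$. I would use this to exhibit $\ms M_{1,1,k}$ as a $\Z/(2)$-gerbe $\pi \colon \ms M_{1,1,k} \to \ms X$, banded by the étale group $\langle[-1]\rangle \cong \Z/(2)$, where $\ms X = [(\A^1 \times \G_m)/\G_m]$ with $\G_m$ acting on $(a_1, \Delta)$ with weights $(1,12)$; equivalently $\ms X$ is the twelfth root stack of $(\A^1_j, \{j=0\})$. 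Its supersingular point $j = 0$ carries inertia $\mu_{12}$, and the gerbe supplies the extra factor of $2$, matching the order-$24$ automorphism group of the supersingular curve.

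I would then run the Leray spectral sequence for $\pi$ in the fppf topology. Since the band is $\Z/(2)$, one has $R^1\pi_*\G_m = \mu_2$ (nontrivial as an fppf sheaf, though with trivial étale stalks because $\mu_2(\bar k) = 1$) and $R^2\pi_*\G_m = \H^2(B\Z/(2), \G_m) = 0$ (trivial Schur multiplier). As $\ms X$ is a tame root stack over $\A^1_k$ one checks $\Br \ms X = \H^2(\ms X, \G_m)_{\mathrm{tors}} = 0$ and that the relevant Leray differentials vanish, so the sequence collapses to $\H^2(\ms M_{1,1,k}, \G_m)_{\mathrm{tors}} \cong \H^1_{\mathrm{fppf}}(\ms X, \mu_2)$. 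Finally the Kummer sequence $1 \to \mu_2 \to \G_m \xrightarrow{2} \G_m \to 1$ gives
\[ 0 \to \mc O^\times(\ms X)/2 \to \H^1_{\mathrm{fppf}}(\ms X, \mu_2) \to \Pic(\ms X)[2] \to 0, \]
and since the invariant units of $\A^1 \times \G_m$ are just $k^\times$ (which is $2$-divisible), while $\Pic \ms X = \Z/(12)$ (the class of $\Delta$ kills $12$ times the weight-one generator), this yields $\H^1_{\mathrm{fppf}}(\ms X, \mu_2) = \Pic(\ms X)[2] = (\Z/(12))[2] = \Z/(2)$, whence $\Br \ms M_{1,1,k} = \Z/(2)$. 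The generator is obtained by transporting the order-$2$ element $6\lambda \in \Pic \ms X$ through Kummer and the gerbe.

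The step I expect to be the main obstacle is making this gerbe picture rigorous in the fppf topology and explaining cleanly why the same bookkeeping gives $0$ in \cref{48}: there $[-1] = -1$ lives in $\G_m$, so $\ms M_{1,1,k}$ is instead a \emph{$\mu_2$}-gerbe over $[\A^2_\Delta/\G_m]_{(2,3)}$, and the non-neutrality of that gerbe forces the analogous differential to be nonzero and to annihilate the would-be class. Pinning down $R^q\pi_*\G_m$, the vanishing of the Leray differentials and of $\Br \ms X$, and the precise behaviour of the $\Z/(2)$- versus $\mu_2$-gerbe class across the characteristic, is where the real work lies; everything else reduces to the Picard-group computation $\Pic \ms M_{1,1,k} = \Z/(12)$ of Fulton--Olsson and to roots-of-unity bookkeeping that is insensitive to the characteristic away from $2$.
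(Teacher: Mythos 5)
Your proposal has a genuine gap, and it sits exactly where all the difficulty of the theorem is concentrated: the supersingular point in characteristic $2$. The claimed fibration $\pi \colon \ms{M}_{1,1,k} \to \ms{X}$ --- a gerbe banded by $\Z/(2) = \langle [-1] \rangle$ over the twelfth root stack $\ms{X} = [(\A^{1} \times \G_{m})/\G_{m}]$ --- does not exist. In characteristic $2$ the curve with $j = 0$ ($= 1728$) has automorphism group the constant group scheme $\SL_{2}(\F_{3}) \cong \mathrm{Q}_{8} \rtimes \Z/(3)$ of order $24$, whose center is $\langle [-1] \rangle$. If $\pi$ were a $\Z/(2)$-gerbe, then $\ms{X}$ would be the rigidification of $\ms{M}_{1,1,k}$ along $\langle [-1] \rangle$, so its inertia at $j=0$ would be the central quotient $\SL_{2}(\F_{3})/\{\pm \mathsf{e}\} \cong A_{4}$; but the root stack has inertia $\mu_{12}$ there. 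These cannot agree: any central extension of a cyclic group by $\Z/(2)$ is abelian, and $\SL_{2}(\F_{3})$ is not; moreover $A_{4}$ is \'etale with $12$ geometric points, while $\mu_{12} = \mu_{4} \times \mu_{3}$ has only three geometric points in characteristic $2$. Your count ``$2 \times 12 = 24$'' matches orders but not group structures. What does exist is the morphism $\ms{M}_{1,1,k} \to \ms{X}$ classified by $(a_{1},\Delta)$ (the modular-forms statement you quote is correct), but on inertia at the supersingular point it is the character $(u,r,s,t) \mapsto u \in \mu_{3}$, whose kernel is $\mathrm{Q}_{8}$, not $\Z/(2)$; it is a $\Z/(2)$-gerbe only after deleting $j = 0$, which is \Cref{50}. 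Consequently the Leray collapse, the identification $R^{1}\pi_{\ast}\G_{m} = \mu_{2}$ over all of $\ms{X}$, and the conclusion $\Br \ms{M}_{1,1,k} \simeq \Pic(\ms{X})[2] = \Z/(2)$ are unsupported.

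What your sketch does prove is the upper bound coming from the generic gerbe: restriction to $U = \A_{k}^{1} \setminus \{0\}$ embeds $\Br' \ms{M}_{1,1,k}$ into $\Br'\,\mathrm{B}(\Z/(2))_{U} \simeq (\G_{m}(U))/(2) \simeq \Z/(2)$, which is \Cref{49} and \Cref{51}; and your observation that $[-1]$ is unipotent in characteristic $2$, so that the band is the \'etale group $\Z/(2)$ rather than $\mu_{2}$, is indeed the correct explanation of why this bound is $\Z/(2)$ rather than $0$. The hard half --- producing a nonzero class, i.e.\ showing the generic class extends across the supersingular point --- is governed by the non-abelian order-$24$ group, and that is precisely what the paper computes by passing to the level-$3$ Hesse cover $\ms{M}_{1,1,k} \simeq [S_{\mr{H},k}/\GL_{2,3}]$ and evaluating $\H^{1}(\GL_{2,3},(\Z/(2))^{\oplus 6})$ in \Cref{04-03}. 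If you repair your approach by replacing $\ms{X}$ with the honest rigidification (inertia $A_{4}$ at $j = 0$), then $\Pic$, $\H^{1}_{\mr{fppf}}(-,\mu_{2})$ and $\Br'$ of that stack are no longer given by root-stack bookkeeping, and computing them re-imports exactly the group cohomology you hoped to avoid. The same defect occurs in your proposed treatment of characteristic $3$, which \Cref{48} also covers: there $\Aut(E_{j=0}) \cong \Z/(3) \rtimes \Z/(4)$ has non-abelian central quotient, so $\ms{M}_{1,1,k}$ is not a $\mu_{2}$-gerbe over a stack with cyclic inertia either; the paper instead handles that case by the d\'evissage of \Cref{sec-02}.
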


To prove the theorem, we calculate the cohomology groups $\H^{2}_{\et}(\ms{M}_{1,1,k} , \mu_{n})$ for varying $n$. There are essentially two ways to approach this calculation: (1) using the coarse moduli space; (2) using a presentation of $\ms{M}_{1,1,k}$ as a quotient stack. In this paper we give a new proof of the Antieau-Meier result using approach (1), and calculate in characteristic 2 using approach (2). 

We also compute the Brauer group of $\ms{M}_{1,1,k}$ where $k$ is a finite field of characteristic $2$:

\begin{theorem} \label{20170322-02} Let $k$ be a finite field of characteristic $2$. Then \[ \Br \ms{M}_{1,1,k} = \begin{cases} \Z/(12) \oplus \Z/(2) &\text{if }x^{2}+x+1 \text{ has a root in }k \\ \Z/(24) &\text{otherwise.} \end{cases} \] \end{theorem}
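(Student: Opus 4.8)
The plan is to deduce the finite-field statement from \Cref{48} by Galois descent. Write $G = \on{Gal}(\overline{k}/k) \cong \hat{\Z}$, topologically generated by the $q$-power Frobenius, and consider the Hochschild--Serre spectral sequence $E_{2}^{p,q} = \H^{p}(G, \H^{q}_{\et}(\ms{M}_{1,1,\overline{k}}, \G_{m})) \Rightarrow \H^{p+q}_{\et}(\ms{M}_{1,1,k}, \G_{m})$ attached to the cover $\ms{M}_{1,1,\overline{k}} \to \ms{M}_{1,1,k}$; since the abutment in total degree $2$ will turn out to be finite, it computes $\Br \ms{M}_{1,1,k}$. The three inputs I need are the $G$-modules $\H^{0}_{\et}(\ms{M}_{1,1,\overline{k}}, \G_{m}) = \overline{k}^{\times}$, $\H^{1}_{\et} = \Pic \ms{M}_{1,1,\overline{k}} = \Z/(12)$, and $\H^{2}_{\et} = \Br \ms{M}_{1,1,\overline{k}} = \Z/(2)$ by \Cref{48}. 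The Picard group is generated by the Hodge bundle $\lambda$, which is defined over the prime field, so $G$ acts trivially on $\Z/(12)$; and $G$ acts trivially on $\Z/(2)$ for lack of automorphisms.

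Because $\overline{k}^{\times}$ is a torsion $G$-module in characteristic $2$, so are all three groups above, and as $\on{cd} \hat{\Z} = 1$ the spectral sequence collapses onto the columns $p = 0, 1$; in particular the potentially relevant differentials vanish for degree reasons (e.g.\ $d_{2} \colon E_{2}^{0,2} \to E_{2}^{2,1} = 0$). Hence in total degree $2$ only $E_{2}^{0,2} = (\Z/(2))^{G} = \Z/(2)$ and $E_{2}^{1,1} = \H^{1}(G, \Z/(12)) = \Z/(12)$ contribute, and I obtain a short exact sequence $0 \to \Z/(12) \to \Br \ms{M}_{1,1,k} \to \Z/(2) \to 0$. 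In particular $\Br \ms{M}_{1,1,k}$ has order $24$, and its prime-to-$2$ part is $\H^{1}(\hat{\Z}, \Z/(3)) = \Z/(3)$ independently of $k$; the two groups in the statement differ only in their $2$-primary parts, namely $\Z/(8)$ versus $\Z/(4) \oplus \Z/(2)$.

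The heart of the matter, and the step I expect to be hardest, is therefore the extension problem. Since $\on{Ext}^{1}(\Z/(2), \Z/(12)) = \Z/(2)$, the sequence is either split, giving $\Z/(12) \oplus \Z/(2)$, or non-split, giving $\Z/(24)$; equivalently, I must decide whether the generator of $(\Br \ms{M}_{1,1,\overline{k}})^{G}$ lifts to a $2$-torsion class over $k$, which amounts to computing $\Br \ms{M}_{1,1,k}[2]$ (of order $4$ if split, $2$ if not). I would extract this from the flat Kummer sequence, computing $\H^{2}_{\mathrm{fl}}(\ms{M}_{1,1,k}, \mu_{2})$ together with the image of $\Pic/2$; here the quotient-stack presentation (approach (2)) is essential, since $\mu_{2}$ is not \'etale in characteristic $2$. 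I expect the dependence on $k$ to originate from the Frobenius action at the supersingular point $j = 0$: the curve $y^{2} + y = x^{3}$ is defined over $\F_{2}$, but its automorphism group $\SL_{2}(\F_{3})$ of order $24$ contains order-$3$ automorphisms $x \mapsto \zeta x$ defined only over $\F_{4}$, so the relevant Frobenius twist --- and hence the splitting --- should be governed by whether $\mu_{3} \subseteq k$, i.e.\ whether $x^{2}+x+1$ has a root in $k$. Making this last link precise is the crux.
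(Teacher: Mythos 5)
Your first half reproduces the paper's argument exactly: the Leray/Hochschild--Serre spectral sequence for $\ms{M}_{1,1,k} \to \Spec k$ with $\G_{m}$-coefficients, the trivial Galois action on $\Pic(\ms{M}_{1,1,k^{\mr{sep}}}) \simeq \Z/(12)$ (Hodge bundle) and on $\H^{2}_{\et}(\ms{M}_{1,1,k^{\mr{sep}}},\G_{m}) \simeq \Z/(2)$ (from \Cref{48}), the vanishing of $\mr{E}_{2}^{p,q}$ for $p \ge 2$, $q \le 1$ since those coefficient modules are torsion and $\on{cd}\widehat{\Z} = 1$, and the resulting extension $0 \to \Z/(12) \to \Br \ms{M}_{1,1,k} \to \Z/(2) \to 0$. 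Your reduction of the splitting question to the order of $(\Br \ms{M}_{1,1,k})[2]$, computed via the fppf Kummer sequence and the quotient presentation, is also precisely the paper's route. (One small gloss: finiteness of $\H^{2}_{\et}(\ms{M}_{1,1,k},\G_{m})$ identifies it with $\Br'\ms{M}_{1,1,k}$, but to conclude it equals $\Br \ms{M}_{1,1,k}$ you still need surjectivity of the Brauer map, i.e. \Cref{20170321-05}.)

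The genuine gap is that you stop exactly where the real work begins, and the heuristic you offer for the decisive step is not the mechanism that makes it true. Deciding between $\Z/(12)\oplus\Z/(2)$ and $\Z/(24)$ requires computing $\H^{0}(\mr{G}_{k},\H^{2}_{\mr{fppf}}(\ms{M}_{1,1,k^{\mr{sep}}},\mu_{2}))$, i.e. the Frobenius action on the group $\Z/(2)\oplus\Z/(2)$ of \Cref{04-03} --- and this is what you defer as ``the crux.'' In the paper this step occupies most of the proof and needs input beyond the computation over $\overline{k}$: explicit inhomogeneous cocycle representatives $f_{\GL_{2,3}}$ for classes in $\H^{1}(\GL_{2,3},M) \simeq (\Z/(2))^{2}$ (constructed in \Cref{22} via an explicit form of the Shapiro isomorphism, \Cref{20151029-05}), the explicit action of a Frobenius lift $\lambda$ with $\lambda(\xi)=\xi^{2}$ on $M = \H^{1}_{\mr{fppf}}(S_{\mr{H},k^{\mr{sep}}},\mu_{2})$, the compatibility \labelcref{20170322-02-eqn-05} ensuring $\lambda \cdot f_{\GL_{2,3}}$ is again a cocycle, and finally a coboundary computation showing $f_{\GL_{2,3}}$ and $\lambda \cdot f_{\GL_{2,3}}$ are cohomologous if and only if the coordinate $s$ vanishes, so the invariants are $\Z/(2)$ when $\xi \notin k$. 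Note where the $k$-dependence actually enters: not through automorphisms of the supersingular curve at $j=0$, but through the fact that Frobenius swaps the two geometric connected components of the level-$3$ Hesse scheme $S_{\mr{H},k^{\mr{sep}}}$ (the two factors in \labelcref{04-eqn-05}, indexed by the choice of primitive cube root of unity) precisely when $x^{2}+x+1$ has no root in $k$; the induced involution on $\H^{1}(\GL_{2,3},M)$ fixes only the subgroup $\{s=0\}$. It is also doubtful that your supersingular-point heuristic can be promoted to a proof by localization: in characteristic $2$ the coarse-space/residual-gerbe d\'evissage of \Cref{sec-02} is unavailable for $\mu_{2}$-coefficients (as you yourself note, $\mu_{2}$ is not \'etale, and proper base change in the flat topology is missing), which is exactly why the paper switches to the global quotient presentation for this case.
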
 

An outline of the paper is as follows.

In \Cref{sec-05} we state definitions and recall general facts about the Brauer group of algebraic stacks.

In \Cref{sec-04} we record some general remarks regarding $\Br \ms{M}_{1,1,S}$. We show that if $S$ is a quasi-compact scheme admitting an ample line bundle and if at least one prime is invertible on $S$, then $\Br \ms{M}_{1,1,S} \simeq \Br' \ms{M}_{1,1,S}$. The restriction of $\ms{M}_{1,1,\Z}$ to the dense open substack of elliptic curves $E/S$ with $j$-invariant $j(E) \in \Gamma(S,\mc{O}_{S})$ for which $j(E)$ and $j(E)-1728$ are invertible is a trivial $\Z/(2)$-gerbe over the coarse space $\A_{\Z}^{1} \setminus \{0,1728\}$, and we use this fact to conclude that $\Br \ms{M}_{1,1,k}$ is a subgroup of $\Z/(2) \oplus \Z/(2)$ for an algebraically closed field $k$ of arbitrary characteristic. 

In \Cref{sec-02} we give a second proof of Antieau and Meier's result above (that $\Br \ms{M}_{1,1,k} = 0$ if $k = \overline{k}$ and $\on{char} k \ne 2$). Using a d\'evissage argument, we study the relationship between the cohomology of $\mu_{n}$ on the stack $\ms{M}_{1,1,k}$ and on $\A_{k}^{1}$, in terms of the stabilizer groups of elliptic curves with $j$-invariant $0,1728 \in \A_{k}^{1}$. This may be of independent interest for computing the Brauer groups of other separated Deligne-Mumford stacks whose coarse moduli space is a smooth curve over an algebraically closed field with vanishing Picard group.

In \Cref{sec-03} we prove \Cref{48} and \Cref{20170322-02}. Antieau and Meier suggest in \cite[11.3]{ANTIEAU-MEIER-TBGOTMSOEC} that the characteristic $2$ case can be settled using the $\GL_{2}(\Z/(3))$-cover $Y(3) \to \ms{M}_{1,1,k}$, where $Y(3)$ denotes the moduli stack of elliptic curves with full level 3 structure, and indeed we use this presentation $\ms{M}_{1,1,k} \simeq [Y(3) / \GL_{2}(\Z/(3))]$ as a global quotient stack to show that its Brauer group is in fact nonzero. We use the ``Hesse presentation'' of $Y(3)$ as in \cite{FULTON-OLSSON-PICARD}; it is shown in \Cref{sec-08} that this presentation coincides with the usual Weierstrass presentation as in \cite{KATZ-MAZUR-AMOEC}. The cohomological descent spectral sequence associated to the covering $Y(3) \to \ms{M}_{1,1,k}$ reduces our task to a computation of the first group cohomology of a $6$-dimensional representation of $\GL_{2}(\Z/(3))$ over $\F_{2}$.

\begin{pg}[Acknowledgements] I thank my advisor Martin Olsson for suggesting this research topic and for his generosity in sharing his ideas. I am also grateful to Benjamin Antieau, Siddharth Mathur, and Lennart Meier for helpful discussions. During this project, I received support from the Raymond H. Sciobereti Fellowship. \end{pg}

\section{The Brauer group of algebraic stacks} \label{sec-05}

Let $(X,\mc{O}_{X})$ be a locally ringed site \cite[V, \S4]{GIRAUD-CN}, \cite[04EU]{STACKS-PROJECT}. For any quasi-coherent $\mc{O}_{X}$-module $\ms{E}$, we set $\GL(\ms{E}) := \underline{\Aut}_{\mc{O}_{X}\text{-mod}}(\ms{E})$ and let $\PGL(\ms{E})$ be the sheaf quotient of $\GL(\ms{E})$ by $\G_{m,X}$ via the diagonal embedding. We denote $\GL_{n}(\mc{O}_{X}) := \GL(\mc{O}_{X}^{\oplus n})$ and $\PGL_{n}(\mc{O}_{X}) := \PGL(\mc{O}_{X}^{\oplus n})$. A basic fact about these groups is the Skolem-Noether theorem, which states that the morphism \begin{align*} \PGL_{n}(\mc{O}_{X}) \to \underline{\Aut}_{\mc{O}_{X}\textnormal{-alg}}(\Mat_{n \times n}(\mc{O}_{X})) \end{align*} is an isomorphism (see \cite[V.4.1]{GIRAUD-CN}).

\begin{definition}[Azumaya algebras] \label{19} \cite[\S2]{Gro68a}, \cite[V, \S4]{GIRAUD-CN} Let $(X,\mc{O}_{X})$ be a locally ringed site. An \emph{Azumaya $\mc{O}_{X}$-algebra} is a quasi-coherent (non-commutative, unital) $\mc{O}_{X}$-algebra $\ms{A}$ such that there exists a covering $\{X_{i} \to X\}_{i \in I}$, positive integers $n_{i}$, and $\mc{O}_{X_{i}}$-algebra isomorphisms $\ms{A}|_{X_{i}} \simeq \Mat_{n_{i} \times n_{i}}(\mc{O}_{X_{i}})$. \par Two Azumaya algebras $\ms{A}_{1}$ and $\ms{A}_{2}$ are \emph{Morita equivalent} if there exist finite type locally free $\mc{O}_{X}$-modules $\mc{E}_{1}$ and $\mc{E}_{2}$, everywhere of positive rank, and an isomorphism \[ \ms{A}_{1} \otimes_{\mc{O}_{X}} \underline{\on{End}}_{\mc{O}_{X}\textnormal{-mod}}(\mc{E}_{1}) \simeq \ms{A}_{2} \otimes_{\mc{O}_{X}} \underline{\on{End}}_{\mc{O}_{X}\textnormal{-mod}}(\mc{E}_{2}) \] of $\mc{O}_{X}$-algebras. Under tensor product of Azumaya algebras, Morita equivalence classes of Azumaya algebras form an abelian group $\Br X$ called the \emph{(Azumaya) Brauer group} of $X$ in which $[\ms{A}]^{-1} = [\ms{A}^{\on{op}}]$ and the identity element is the class of trivial Azumaya algebras $[\underline{\on{End}}_{\mc{O}_{X}\textnormal{-mod}}(\mc{E})]$. \end{definition}

\begin{definition}[Gerbe of trivializations] \label{18} \cite[IV, \S4.2]{GIRAUD-CN}, \cite[12.3.5]{OLSSON} There is a natural way to associate, to every Azumaya $\mc{O}_{X}$-algebra $\ms{A}$, a $\G_{m,X}$-gerbe $\mc{G}_{\ms{A}}$ called the \emph{gerbe of trivializations of $\ms{A}$}. An object of $\mc{G}_{\ms{A}}$ is a triple \[ (U,\mc{E},\sigma) \] consisting of an object $U \in X$, a finite type locally free $\mc{O}_{U}$-module $\mc{E}$ (necessarily everywhere positive rank), and an isomorphism $\sigma : \underline{\on{End}}_{\mc{O}_{U}\textnormal{-mod}}(\mc{E}) \to \ms{A}|_{U}$ of $\mc{O}_{U}$-algebras. A morphism \[ (f,f^{\sharp}) : (U_{1},\mc{E}_{1},\sigma_{1}) \to (U_{2},\mc{E}_{2},\sigma_{2}) \] consists of a morphism $f \in \Mor_{X}(U_{1},U_{2})$ and an isomorphism $f^{\sharp} : f^{\ast}\mc{E}_{2} \to \mc{E}_{1}$ of $\mc{O}_{U_{1}}$-modules such that $\sigma_{2} = \sigma_{1} \circ \rho_{f^{\sharp}}$ where $\rho_{f^{\sharp}}$ denotes conjugation by $f^{\sharp}$. For any object $(U,\mc{E},\sigma) \in \mc{G}_{\ms{A}}$, there is a canonical injection \[ \iota_{(U,\mc{E},\sigma)} : \G_{m,U} \to \underline{\on{Aut}}_{(U,\mc{E},\sigma)} \] of sheaves on $X/U$, sending $u \mapsto (\id_{U},u)$; this is in fact an isomorphism, since if $(\id_{U},f^{\sharp}) \in \on{Aut}_{\mc{G}_{\ms{A}}(U)}((U,\mc{E},\sigma))$ then $f^{\sharp} \in Z(\on{End}_{\mc{O}_{U}\textnormal{-mod}}(\mc{E}))$, which coincides with $\mc{O}_{U}$ since $Z(\Mat_{n \times n}(A)) = A$ for any commutative, unital ring $A$. \par By the Skolem-Noether theorem, any two local trivializations of $\ms{A}$ are locally related by an automorphism of the trivializing vector bundle $\mc{E}$, i.e. any two objects of $\mc{G}_{\ms{A}}$ are locally isomorphic. Furthermore, according to the definition, an Azumaya algebra is locally trivial, i.e. for any $U \in X$ there exists a covering $\{U_{i} \to U\}$ such that the fiber category $\mc{G}_{\ms{A}}(U_{i})$ is nonempty. These considerations show that $\mc{G}_{\ms{A}}$ is a $\G_{m,X}$-gerbe. \par The assignment $\ms{A} \mapsto \mc{G}_{\ms{A}}$ induces a group homomorphism \begin{align} \label{eqn-02} \alpha_{X}' : \Br X \to \H^{2}(X,\G_{m,X}) \end{align} which is injective since a $\G_{m,X}$-gerbe $\mc{G}$ is trivial if and only if $\mc{G}(X)$ is nonempty. \par For a morphism \[ (f,f^{\sharp}) : (X,\mc{O}_{X}) \to (Y,\mc{O}_{Y}) \] of locally ringed sites, the diagram \begin{equation} \label{eqn-10} \begin{tikzpicture}[>=angle 90, baseline=(current bounding box.center)] 
\matrix[matrix of math nodes,row sep=2em, column sep=2em, text height=1.5ex, text depth=0.25ex] { 
|[name=11]| \Br X & |[name=12]| \H^{2}(X,\G_{m,X}) \\ 
|[name=21]| \Br Y & |[name=22]| \H^{2}(Y,\G_{m,Y}) \\
}; 
\draw[->,font=\scriptsize]
(11) edge node[above=0pt] {$\alpha_{X}'$} (12) (21) edge node[below=0pt] {$\alpha_{Y}'$} (22) (21) edge node[left=-1pt] {$f^{\ast}$} (11) (22) edge node[right=-1pt] {$f^{\ast}$} (12); \end{tikzpicture} \end{equation} is commutative. \end{definition}

\begin{lemma} \label{03} Let $\ms{X}$ be a $\G_{m,X}$-gerbe over a locally ringed site $X$. The class $[\ms{X}] \in \H^{2}(X,\G_{m,X})$ is in the image of $\alpha_{X}'$ if and only if $\ms{X}$ admits a $1$-twisted finite locally free sheaf of everywhere positive rank. \end{lemma}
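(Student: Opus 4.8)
The plan is to prove both implications by producing, on one side, a distinguished twisted sheaf living on a gerbe of trivializations, and, on the other, an Azumaya algebra obtained from a twisted sheaf as its descended endomorphism algebra. Throughout I call a quasi-coherent sheaf $\mc{F}$ on a $\G_{m,X}$-gerbe \emph{$1$-twisted} if, for every local object $x$ with its canonical inertial identification $\iota_{x} : \G_{m} \xrightarrow{\sim} \underline{\on{Aut}}(x)$, the automorphism $\iota_{x}(u)$ acts on $\mc{F}$ by multiplication by $u$. The mechanism underlying both directions is that if $\mc{E}$ is $1$-twisted and finite locally free, then $\underline{\on{End}}_{\mc{O}\textnormal{-mod}}(\mc{E}) \simeq \mc{E} \otimes \mc{E}^{\vee}$ is $0$-twisted, i.e. inertia acts trivially on it, so it descends along $\ms{X} \to X$ to an $\mc{O}_{X}$-algebra.

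For the forward implication, suppose $[\ms{X}] = \alpha_{X}'([\ms{A}])$ for some Azumaya $\mc{O}_{X}$-algebra $\ms{A}$. Since $\alpha_{X}'$ sends $[\ms{A}]$ to the class of its gerbe of trivializations $\mc{G}_{\ms{A}}$, and $\G_{m,X}$-gerbes are classified up to equivalence by $\H^{2}(X,\G_{m,X})$, the equality $[\ms{X}] = [\mc{G}_{\ms{A}}]$ yields an equivalence $\ms{X} \simeq \mc{G}_{\ms{A}}$ of $\G_{m,X}$-gerbes. On $\mc{G}_{\ms{A}}$ there is a tautological sheaf $\mc{E}^{\mr{taut}}$ assigning to each object $(U,\mc{E},\sigma)$ the module $\mc{E}$, with comparison isomorphisms the maps $f^{\sharp}$; by \Cref{18} it is finite locally free of everywhere positive rank, and by the definition of $\iota_{(U,\mc{E},\sigma)}$ the inertial automorphism $(\id_{U},u)$ acts as $f^{\sharp} = u$, so $\mc{E}^{\mr{taut}}$ is $1$-twisted. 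Transporting it along the equivalence produces the required $1$-twisted sheaf on $\ms{X}$.

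For the reverse implication, let $\mc{E}$ be a $1$-twisted finite locally free sheaf on $\ms{X}$ of everywhere positive rank. By the observation above, $\underline{\on{End}}_{\mc{O}\textnormal{-mod}}(\mc{E})$ is $0$-twisted and so descends to a quasi-coherent $\mc{O}_{X}$-algebra $\ms{A}$. On a covering over which $\ms{X}$ admits sections and $\mc{E}$ becomes free of rank $n \ge 1$, the algebra $\underline{\on{End}}_{\mc{O}\textnormal{-mod}}(\mc{E})$ is isomorphic to $\Mat_{n\times n}(\mc{O})$, whence $\ms{A}$ is Azumaya. It then remains to identify $\ms{X}$ with $\mc{G}_{\ms{A}}$. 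I would define a morphism of gerbes $\Phi : \ms{X} \to \mc{G}_{\ms{A}}$ sending a local object $x \in \ms{X}(U)$ to the triple $(U, x^{\ast}\mc{E}, \sigma_{x})$, where $\sigma_{x} : \underline{\on{End}}_{\mc{O}\textnormal{-mod}}(x^{\ast}\mc{E}) \to \ms{A}|_{U}$ is the canonical isomorphism furnished by the descent datum for $\underline{\on{End}}_{\mc{O}\textnormal{-mod}}(\mc{E})$, and sending a morphism $x_{1} \to x_{2}$ to the induced comparison on pullbacks. One checks that $\Phi$ carries the inertial $\G_{m}$-action on $\ms{X}$ to that on $\mc{G}_{\ms{A}}$, so it is a morphism, hence an equivalence, of $\G_{m,X}$-gerbes; therefore $[\ms{X}] = [\mc{G}_{\ms{A}}] = \alpha_{X}'([\ms{A}])$ lies in the image.

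The main obstacle is the bookkeeping in the reverse direction at the level of a general locally ringed site: verifying that $0$-twisted quasi-coherent sheaves descend faithfully along $\ms{X} \to X$, that the descended $\ms{A}$ is genuinely Azumaya (this is exactly where everywhere positive rank is essential, a vanishing rank producing the zero algebra), and that $\Phi$ is compatible with the two $\G_{m}$-gerbe structures. The forward direction is comparatively formal once the tautological sheaf and the classification of $\G_{m,X}$-gerbes by $\H^{2}(X,\G_{m,X})$ are in hand.
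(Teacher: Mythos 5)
Your proposal is correct and is essentially the proof the paper has in mind: the paper defers to the standard references (de Jong--Gabber, Lieblich, Olsson), whose argument is exactly yours --- the tautological $1$-twisted sheaf on the gerbe of trivializations $\mc{G}_{\ms{A}}$ for the forward direction, and descent of the $0$-twisted algebra $\underline{\on{End}}_{\mc{O}\textnormal{-mod}}(\mc{E})$ to an Azumaya algebra $\ms{A}$ with $\ms{X} \simeq \mc{G}_{\ms{A}}$ for the converse. Nothing further to add.
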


The usual proof (c.f. \cite[2.14]{DEJONG-GABBER}, \cite[3.1.2.1]{LIEBLICH-TSAPIP}, \cite[12.3.11]{OLSSON}) of \Cref{03} applies more generally to the case of $\G_{m}$-gerbes over an arbitrary locally ringed site.

We will only consider locally ringed sites $(X,\mc{O}_{X})$ whose underlying site $X$ is quasi-compact \cite[090G]{STACKS-PROJECT}. For such $X$, the Brauer group $\Br X$ is a torsion group.

\begin{definition} The torsion subgroup of $\H^{2}(X,\G_{m,X})$, denoted $\Br' X$, is called the \emph{cohomological Brauer group} and the restriction \begin{align} \alpha_{X} : \Br X \to \Br' X \end{align} of $\alpha_{X}'$ to $\Br' X$ is called the \emph{Brauer map}. \end{definition}

We will consider algebraic stacks using the \'etale topology except in \Cref{sec-03} (the case of characteristic $2$) in which we will require the flat topology.

Surjectivity of the Brauer map may be checked on a finite flat surjective covering (c.f. \cite[II, Lemma 4]{GABBER-THESIS}, \cite[2.15]{DEJONG-GABBER}, \cite[3.1.3.5]{LIEBLICH-TSAPIP}):

\begin{proposition} \label{01} Let $f : X \to Y$ be a finitely presented, finite, flat, surjective morphism of algebraic stacks. A class $\beta \in \H^{2}(Y,\G_{m,Y})$ is in the image of $\alpha_{Y}'$ if and only if its pullback $f^{\ast}\beta \in \H^{2}(X,\G_{m,X})$ is in the image of $\alpha_{X}'$. \end{proposition}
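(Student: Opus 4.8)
The plan is to prove the two implications separately; the reverse implication is where the real content lies. For the forward direction, suppose $\beta = \alpha_{Y}'([\ms{A}])$ for an Azumaya $\mc{O}_{Y}$-algebra $\ms{A}$. Then $f^{\ast}\ms{A}$ is again an Azumaya $\mc{O}_{X}$-algebra (the defining local triviality pulls back), and the commutativity of \eqref{eqn-10} gives $f^{\ast}\beta = \alpha_{X}'([f^{\ast}\ms{A}])$, so $f^{\ast}\beta \in \im \alpha_{X}'$. This requires no finiteness or flatness hypothesis on $f$.

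For the converse I would reformulate the problem using \Cref{03}, trading membership in $\im \alpha_{Y}'$ for the existence of a $1$-twisted sheaf. Let $\pi : \ms{Y} \to Y$ be a $\G_{m,Y}$-gerbe whose class in $\H^{2}(Y,\G_{m,Y})$ is $\beta$, and form the fiber product
\[
\begin{array}{ccc}
\ms{X} & \xrightarrow{\ g\ } & \ms{Y} \\
\downarrow & & \downarrow \\
X & \xrightarrow{\ f\ } & Y
\end{array}
\]
so that $\ms{X} \to X$ is a $\G_{m,X}$-gerbe of class $f^{\ast}\beta$. Since $g$ is the base change of $f$ along $\pi$, it is again finitely presented, finite, flat, and surjective, and moreover $g$ is a $1$-morphism of gerbes over $Y$, hence compatible with the two inertial $\G_{m}$-bandings. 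By hypothesis and \Cref{03}, $\ms{X}$ admits a $1$-twisted finite locally free $\mc{O}_{\ms{X}}$-module $\mc{E}$ of everywhere positive rank. The key move is to push it forward: I set $\mc{F} := g_{\ast}\mc{E}$ and claim that $\mc{F}$ is a $1$-twisted finite locally free $\mc{O}_{\ms{Y}}$-module of everywhere positive rank. Granting this, \Cref{03} applied to $\ms{Y}$ yields $\beta \in \im \alpha_{Y}'$, which finishes the proof.

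It remains to verify the three properties of $\mc{F}$, and this is the main obstacle. For finite local freeness, I would pass to a smooth presentation of $\ms{Y}$ by a scheme, along which $g$ becomes a genuine finite locally free morphism of schemes; pushforward of a locally free sheaf along such a morphism is locally free, and $g$ being finite (hence affine) means $g_{\ast}$ computes correctly and commutes with this base change. For positivity of the rank, I use that a finite flat surjective morphism has everywhere positive degree, so $g_{\ast}$ sends a bundle of everywhere positive rank to one of everywhere positive rank. The most delicate point is the preservation of the twisting: because $g$ intertwines the inertial $\G_{m}$-actions of the two gerbes, the pushforward respects the weight decomposition with respect to the band, so $g_{\ast}$ carries weight-$1$ sheaves to weight-$1$ sheaves. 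I expect this equivariance of $g_{\ast}$, together with the reduction of local freeness to the scheme case, to be the step that needs the most care.
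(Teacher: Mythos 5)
Your proof is correct and follows essentially the same route as the paper: pull the gerbe back along $f$, use \Cref{03} to obtain a $1$-twisted finite locally free sheaf of positive rank upstairs, push it forward along the induced finite morphism of gerbes, and apply \Cref{03} again, with the forward direction coming from commutativity of \labelcref{eqn-10}. The only difference is that you spell out the verification that the pushforward is again $1$-twisted, finite locally free, and of positive rank, which the paper simply asserts.
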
 \begin{proof} Let $\ms{Y}$ be the $\G_{m,Y}$-gerbe corresponding to $\beta$. Set $\ms{X} := X \times_{Y} \ms{Y}$ and let $F : \ms{X} \to \ms{Y}$ be the induced morphism of algebraic stacks. If $\ms{X}$ is in the image of $\alpha_{X}'$, then there exists a $1$-twisted finite locally free $\mc{O}_{\ms{X}}$-module $\ms{E}$ of everywhere positive rank. The pushforward $F_{\ast}\ms{E}$ is a $1$-twisted, finite locally free $\mc{O}_{\ms{Y}}$-module of everywhere positive rank. Hence $\ms{Y}$ is in the image of $\alpha_{Y}'$. \par The other direction follows from commutativity of the diagram \labelcref{eqn-10}. \end{proof}

\begin{corollary} \label{11} Let $f : X \to Y$ be a finitely presented, finite, flat, surjective morphism of algebraic stacks. If $\alpha_{X}$ is an isomorphism, then $\alpha_{Y}$ is an isomorphism. \end{corollary}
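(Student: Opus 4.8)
The plan is to deduce this immediately from \Cref{01}, using that the maps $\alpha_X'$ and $\alpha_Y'$ are injective (as recorded in \Cref{18}) and that the Brauer groups in question are torsion. Since $\alpha_Y$ is by definition the corestriction of the injective map $\alpha_Y'$ to the subgroup $\Br' Y$, injectivity of $\alpha_Y$ is automatic; the only real content is surjectivity, i.e.\ showing that every class $\beta \in \Br' Y$ lies in the image of $\alpha_Y'$.

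First I would take an arbitrary torsion class $\beta \in \Br' Y = \H^{2}(Y,\G_{m,Y})_{\mathrm{tors}}$ and pull it back along $f$. Because $f^{\ast}$ is a group homomorphism, $f^{\ast}\beta$ is again torsion, so $f^{\ast}\beta \in \Br' X$. The hypothesis that $\alpha_X$ is an isomorphism gives in particular that $\alpha_X$ is surjective onto $\Br' X$, so $f^{\ast}\beta$ lies in the image of $\alpha_X'$. Now I would invoke \Cref{01}: since $f$ is finitely presented, finite, flat, and surjective and $f^{\ast}\beta \in \im \alpha_X'$, the class $\beta$ itself lies in $\im \alpha_Y'$. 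Finally, because the underlying site of $Y$ is quasi-compact, $\Br Y$ is torsion, whence $\im \alpha_Y' \subseteq \Br' Y$ and therefore $\im \alpha_Y' = \im \alpha_Y$; thus $\beta \in \im \alpha_Y$. This shows that $\alpha_Y$ is surjective, and together with the automatic injectivity we conclude that $\alpha_Y$ is an isomorphism.

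I do not expect a serious obstacle here: the entire argument is a formal consequence of \Cref{01} and the injectivity of the gerbe-class map from \Cref{18}. The only point requiring care is the bookkeeping of the torsion conditions — verifying that $f^{\ast}\beta$ remains torsion so that surjectivity of $\alpha_X$ applies, and that the image of $\alpha_Y'$ is contained in the torsion subgroup $\Br' Y$ so that membership in $\im \alpha_Y'$ coincides with membership in $\im \alpha_Y$.
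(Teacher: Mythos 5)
Your proposal is correct and is exactly the argument the paper intends: the paper states \Cref{11} as an immediate consequence of \Cref{01}, and your write-up simply spells out that deduction (automatic injectivity from \Cref{18}, surjectivity via pullback of a torsion class, \Cref{01}, and the fact that $\im \alpha_Y' \subseteq \Br' Y$ by quasi-compactness). The torsion bookkeeping you flag is precisely the only content, and you handle it correctly.
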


\begin{corollary} \label{13} Let $X$ be a smooth separated generically tame Deligne-Mumford stack over a field $k$ with quasi-projective coarse moduli space. Then the Brauer map $\alpha_{X}$ is surjective. \end{corollary}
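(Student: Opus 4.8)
The plan is to reduce the statement to the case of a quasi-projective scheme, for which surjectivity (indeed bijectivity) of the Brauer map is a theorem of Gabber, and then to transfer this conclusion along a finite flat cover using \Cref{11} (equivalently \Cref{01}).

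First I would invoke the covering theorem of Kresch and Vistoli: a smooth separated generically tame Deligne-Mumford stack $X$ over a field $k$ with quasi-projective coarse moduli space admits a finite, flat, surjective morphism $p \colon Y \to X$ from a quasi-projective $k$-scheme $Y$. This is the essential geometric input, and it is precisely where all of the hypotheses are consumed: smoothness, separatedness, generic tameness, and quasi-projectivity of the coarse space are exactly the conditions under which such a cover is known to exist.

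Since $Y$ is quasi-projective, it is quasi-compact and carries an ample line bundle, so by Gabber's theorem on surjectivity of the Brauer map for schemes admitting an ample line bundle the map $\alpha_{Y}$ is an isomorphism. Applying \Cref{11} to the finitely presented, finite, flat, surjective morphism $p \colon Y \to X$ then shows that $\alpha_{X}$ is an isomorphism, in particular surjective. One could equally argue directly from \Cref{01}: any torsion class $\beta \in \Br' X \subseteq \H^{2}(X,\G_{m,X})$ pulls back to a torsion class $p^{\ast}\beta \in \Br' Y$, which lies in the image of $\alpha_{Y}'$ by Gabber's theorem, whence \Cref{01} places $\beta$ in the image of $\alpha_{X}'$; as $\beta$ was arbitrary, $\alpha_{X}$ is surjective.

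The main obstacle is entirely contained in the first step, namely the existence of the finite flat cover $Y \to X$ by a quasi-projective scheme. This is a substantial result, and I would cite Kresch-Vistoli for it rather than attempt to reprove it here. Once the cover is in hand the remainder of the argument is purely formal, resting only on \Cref{11} together with Gabber's theorem, and the generic-tameness hypothesis plays no further role beyond guaranteeing the cover.
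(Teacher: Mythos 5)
Your proposal is correct and follows essentially the same route as the paper: the Kresch--Vistoli finite flat cover by a quasi-projective scheme, Gabber's theorem for that scheme, and then transfer of surjectivity along the cover via \Cref{01} (equivalently \Cref{11}). The only cosmetic difference is that you note $\alpha_{Y}$ is in fact an isomorphism (injectivity being automatic), while the paper only invokes surjectivity, which is all that is needed.
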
 \begin{proof} By Kresch-Vistoli \cite[2.1,2.2]{KV2004}, such $X$ has a finite flat surjection $Z \to X$ where $Z$ is a quasi-projective $k$-scheme. By Gabber's theorem (see \cite[1.1]{DEJONG-GABBER}), the Brauer map is surjective for $Z$. Thus the Brauer map is surjective for $X$ by \Cref{01}. \end{proof}

\begin{remark} \label{14} If $\on{char} k \ne 2$, the stack $\ms{M}_{1,1,k}$ is generically tame and so \Cref{13} implies surjectivity of the Brauer map $\alpha_{\ms{M}_{1,1,k}}$. For the case $\on{char} k = 2$, see \Cref{20170321-05}. \end{remark}

\section{Preliminary observations} \label{sec-04}

The purpose of this section is to prove \Cref{51} below. Let us start, however, with a few preliminary observations about the stack $\ms{M}_{1,1}$ and its Brauer group.

The stack $\ms{M}_{1,1,\Z}$ is a Deligne-Mumford stack smooth and separated over $\Z$ \cite[13.1.2]{OLSSON}; hence if $S$ is a regular Noetherian scheme then $\ms{M}_{1,1,S}$ is a regular Noetherian stack. For any locally Noetherian scheme $S$, the morphism \[ \pi : \ms{M}_{1,1,S} \to \A_{S}^{1} \] sending an elliptic curve to its $j$-invariant identifies $\A_{S}^{1}$ with the coarse moduli space of $\ms{M}_{1,1,S}$ \cite[4.4]{FULTON-OLSSON-PICARD}. \par In general, if $\ms{X}$ is a separated Deligne-Mumford stack and $\pi : \ms{X} \to X$ is its coarse moduli space, then $\pi$ is initial among maps from $\ms{X}$ to an algebraic space, so the map $X(\mc{G}) \to \ms{X}(\mc{G})$ is an isomorphism for any group scheme $\mc{G}$; moreover if $U \to X$ is an etale morphism, then $\pi_{U} : \ms{X} \times_{X} U \to U$ is a coarse moduli space. Applying these observations to $\mc{G} = \G_{a},\G_{m},\mu_{n}$ implies that the canonical maps $\mc{O}_{X} \to \pi_{\ast}\mc{O}_{\ms{X}}$, $\G_{m,X} \to \pi_{\ast}\G_{m,\ms{X}}$, $\mu_{n,X} \to \pi_{\ast}\mu_{n,\ms{X}}$ are isomorphisms; thus we will omit subscripts and denote $\mu_{n},\G_{m}$ for the corresponding sheaves on either $\ms{M}_{1,1,S}$ or $\A_{S}^{1}$. 

\begin{lemma} \label{20170321-05} Let $S$ be a quasi-compact scheme admitting an ample line bundle, and suppose that at least one prime $p$ is invertible in $S$. Then the Brauer map $\alpha_{\ms{M}_{1,1,S}} : \Br \ms{M}_{1,1,S} \to \Br' \ms{M}_{1,1,S}$ is an isomorphism. \end{lemma}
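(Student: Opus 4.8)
Since the map $\alpha'_{\ms{M}_{1,1,S}}$ of \labelcref{eqn-02} is injective, so is $\alpha_{\ms{M}_{1,1,S}}$, and the only issue is surjectivity. The plan is to reduce to Gabber's theorem for schemes by producing a finite flat surjective cover of $\ms{M}_{1,1,S}$ by a scheme with an ample line bundle, and then appealing to \Cref{11}. The hypothesis provides a prime $p$ invertible on $S$; I would fix an integer $n = p^{k} \geq 3$, which is then also invertible on $S$ (taking $n = 4$ when $p = 2$ and $n = p$ otherwise), and consider the forgetful morphism $Y(n)_{S} \to \ms{M}_{1,1,S}$ from the moduli stack of elliptic curves equipped with a full level-$n$ structure.

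For $n \geq 3$ the stack $Y(n)$ over $\Z[1/n]$ is represented by a smooth affine curve, and the morphism $Y(n) \to \ms{M}_{1,1,\Z[1/n]}$ is finite \'etale with group $\GL_{2}(\Z/(n))$. Base-changing along $S \to \Spec \Z[1/n]$, I would conclude that $Y(n)_{S}$ is a scheme which is affine over $S$, and that $Y(n)_{S} \to \ms{M}_{1,1,S}$ is finitely presented, finite, flat, and surjective. By \Cref{11} it then suffices to prove that $\alpha_{Y(n)_{S}}$ is an isomorphism; as this map is automatically injective, I need only its surjectivity, which I would obtain from Gabber's theorem (see \cite[1.1]{DEJONG-GABBER}) once I have checked that $Y(n)_{S}$ is a quasi-compact scheme carrying an ample line bundle.

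The technical heart is this last verification. Writing $f \colon Y(n)_{S} \to S$ for the structure morphism, which is affine and hence quasi-compact, and $L$ for an ample line bundle on $S$, I expect $f^{\ast}L$ to be ample on $Y(n)_{S}$: covering $S$ by the affine opens $S_{s} = \{\, s \neq 0 \,\}$ associated to sections $s \in \Gamma(S, L^{\otimes m})$, the preimages $f^{-1}(S_{s}) = \{\, f^{\ast}s \neq 0 \,\}$ are affine because $f$ is affine, so the affine loci attached to sections of powers of $f^{\ast}L$ cover $Y(n)_{S}$, which is the criterion for ampleness on a quasi-compact scheme. Granting this, $Y(n)_{S}$ satisfies exactly the hypotheses imposed on $S$ in the statement, so Gabber's theorem applies to it and the argument closes. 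I expect the genuine work to lie here rather than in any cohomological input: the cohomological comparison is entirely delegated to the cited form of Gabber's theorem, while the remaining steps — that a single invertible prime yields a suitable $n \geq 3$, and that representability, finiteness, and flatness of $Y(n)_{S} \to \ms{M}_{1,1,S}$ survive base change — are standard once the level-$n$ moduli problem is set up.
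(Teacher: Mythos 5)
Your proposal is correct and follows essentially the same route as the paper's proof: pass to the level cover $Y(N)_{S} \to \ms{M}_{1,1,S}$ for an $N \ge 3$ invertible on $S$, note that $Y(N)_{S}$ is affine over $S$ and hence quasi-compact with an ample line bundle, apply Gabber's theorem there, and descend surjectivity via \Cref{11}. The only difference is that you spell out details the paper leaves implicit — the explicit choice $n = p^{k} \ge 3$ and the verification that the pullback of an ample line bundle along an affine morphism is ample — both of which are handled correctly.
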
 \begin{proof} By \cite[4.7.2]{KATZ-MAZUR-AMOEC}, for $N \ge 3$ the moduli stack of full level $N$ structures is representable by an affine $\Z[\frac{1}{N}]$-scheme $Y(N)$. Set $Y(N)_{S} := Y(N) \times_{\Z[\frac{1}{N}]} S$; the projection $Y(N)_{S} \to S$ is an affine morphism, hence $Y(N)_{S}$ is quasi-compact and admits an ample line bundle, hence the Brauer map $\alpha_{Y(N)_{S}}$ is surjective by Gabber's theorem (see \cite{DEJONG-GABBER}), and, since the map $Y(N)_{S} \to \ms{M}_{1,1,S}$ is finite locally free, we have by \Cref{11} that $\alpha_{\ms{M}_{1,1,S}}$ is surjective. \end{proof}

\begin{lemma} \label{50} Let $U := \Spec \Z[t,(t(t-1728))^{-1}] \subset \A_{\Z}^{1}$ and let $\ms{M}_{1,1,\Z}^{\circ} := U \times_{\A_{\Z}^{1}} \ms{M}_{1,1,\Z}$. Then the restriction $\pi^{\circ} : \ms{M}_{1,1,\Z}^{\circ} \to U$ of $\pi$ to $U$ is a trivial $\Z/(2)$-gerbe, i.e. $\ms{M}_{1,1,\Z}^{\circ} \simeq \mathrm{B}(\Z/(2))_{U}$. \end{lemma}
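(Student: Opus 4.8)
The plan is to exhibit an explicit Weierstrass family over $U$ that trivializes the gerbe, and to verify that the residual automorphisms are exactly $\Z/(2)$. Away from $j = 0, 1728$ every elliptic curve has automorphism group exactly $\{\pm 1\} \simeq \Z/(2)$, so the inertia of $\ms{M}_{1,1,\Z}^{\circ}$ over $U$ is the constant group $\Z/(2)$, and $\pi^{\circ}$ is \emph{a priori} a $\Z/(2)$-gerbe; the content of the lemma is that this gerbe is \emph{trivial}. By the general principle that a gerbe is trivial precisely when it admits a global section, it suffices to construct an elliptic curve $E/U$ whose $j$-invariant realizes the tautological coordinate $t \in \Gamma(U, \mc{O}_{U})$, since such a curve gives a section $U \to \ms{M}_{1,1,\Z}^{\circ}$ of $\pi^{\circ}$.

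First I would write down the universal Weierstrass equation with prescribed $j$-invariant. Over $U = \Spec \Z[t, (t(t-1728))^{-1}]$, the standard curve
\begin{equation*}
E \colon \quad y^{2} = x^{3} - \frac{3t}{t-1728}\, x - \frac{2t}{t-1728}
\end{equation*}
has $j$-invariant equal to $t$; here $t$ and $t - 1728$ are invertible on $U$ precisely so that the coefficients are defined and the discriminant is a unit. (The coefficients $c_{4} = \tfrac{3t}{t-1728}$, $c_{6} = \tfrac{2t}{t-1728}$ are arranged so that $j = c_{4}^{3}/\Delta = t$.) This $E$ defines a morphism $s \colon U \to \ms{M}_{1,1,\Z}^{\circ}$ with $\pi^{\circ} \circ s = \id_{U}$, so $\ms{M}_{1,1,\Z}^{\circ}(U)$ is nonempty and the gerbe is trivial.

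Next I would identify the gerbe with $\mathrm{B}(\Z/(2))_{U}$ rather than merely asserting triviality as a $\Z/(2)$-gerbe. A trivial gerbe banded by a group $G$ is equivalent to $\mathrm{B}G$ over the base, and the banding is determined by the automorphism sheaf of any section. For the section $s$ above, the relevant sheaf is $\underline{\Aut}_{U}(E)$, the automorphisms of the universal curve; since $j \ne 0, 1728$ everywhere on $U$ this is the constant group scheme $(\Z/(2))_{U}$, generated by the inversion $[-1] \colon (x,y) \mapsto (x,-y)$. Thus $\ms{M}_{1,1,\Z}^{\circ} \simeq \mathrm{B}(\Z/(2))_{U}$ as claimed.

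The main obstacle is the characteristic-$2$ (and characteristic-$3$) behavior hidden in the denominators: the coefficients I wrote have $3$ and $2$ in the numerators, and one must check that the formula still defines a smooth genuine elliptic curve, with $\underline{\Aut}_{U}(E)$ exactly $\Z/(2)$, over all residue fields of $U$, including those of characteristic $2$ and $3$. The cleanest remedy is to verify directly that the discriminant $\Delta$ is a unit in $\Z[t, (t(t-1728))^{-1}]$ (it will be a unit times a power of $t(t-1728)$), so that $E$ is smooth over all of $U$; and to confirm that, because $1728 = 2^{6}\cdot 3^{3}$ becomes $0$ in characteristics $2$ and $3$, the locus $t = 1728$ coincides with $t = 0$ there, so inverting $t(t-1728)$ still removes exactly the supersingular/extra-automorphism points and leaves inversion as the only nontrivial automorphism. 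Once smoothness and the automorphism computation are checked uniformly over $\Z$, the identification with $\mathrm{B}(\Z/(2))_{U}$ follows formally.
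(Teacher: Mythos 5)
There is a genuine gap, and it sits exactly at the step your whole argument rests on: the explicit curve you write down is not an elliptic curve over all of $U$. For the short Weierstrass family $y^{2} = x^{3} - \frac{3t}{t-1728}x - \frac{2t}{t-1728}$ one computes $c_{4} = \frac{144\,t}{t-1728}$ and $\Delta = \frac{1728^{2}\,t^{2}}{(t-1728)^{3}}$; so while the $j$-invariant is indeed $t$, the discriminant is \emph{not} a unit of $\Z[t,(t(t-1728))^{-1}]$ — it carries the factor $1728^{2} = 2^{12}\cdot 3^{6}$, which is a non-unit in $\Z$. Your proposed "remedy" (checking that $\Delta$ is a unit times a power of $t(t-1728)$) therefore fails when actually carried out. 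The failure is not an artifact of the computation: in residue characteristic $2$ \emph{any} equation of the form $y^{2} = f(x)$ is singular (the partial derivative in $y$ vanishes identically, and $\partial_x f$ has a zero), so your family degenerates to singular cubics precisely over the characteristic-$2$ and characteristic-$3$ fibers of $U$ — and characteristic $2$ is the whole point of this paper. The paper avoids this by choosing a Weierstrass equation with $a_{1} = 1$, namely $Y^{2}Z + XYZ = X^{3} - \frac{36}{t-1728}XZ^{2} - \frac{1}{t-1728}Z^{3}$, whose discriminant is $\frac{t^{2}}{(t-1728)^{3}}$, a genuine unit of $\Z[t,(t(t-1728))^{-1}]$; the nonvanishing of $a_{1}$ modulo $2$ is exactly what makes smoothness possible in characteristic $2$. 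Your argument can be repaired by substituting this curve, but as written the section $s$ of $\pi^{\circ}$ does not exist.

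A secondary point: you assert that $\pi^{\circ}$ is "a priori a $\Z/(2)$-gerbe" because the automorphism groups are all $\Z/(2)$, but constancy of the stabilizers alone does not give the gerbe condition — one also needs that any two elliptic curves over a base with the same (everywhere non-special) $j$-invariant are isomorphic \'etale-locally on the base. The paper supplies this via Deligne's result (\cite[5.3]{DELIGNE-COURBES-ELLIPTIQUES}), together with \cite[(8.4.2)]{KATZ-MAZUR-AMOEC} for $\Aut(E/S) \simeq \Z/(2)$. You could alternatively invoke the theorem that a separated Deligne--Mumford stack is a gerbe over its coarse space wherever its inertia is finite \'etale of locally constant degree, but some such input must be cited or proved; it is not automatic from the pointwise computation of automorphism groups.
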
 \begin{proof} Let $S$ be a scheme and let $E_{1},E_{2}$ be two elliptic curves over $S$. If $j(E_{1}) = j(E_{2}) \in \Gamma(S,\mc{O}_{S})$ and $j(E_{i}),j(E_{i})-1728$ are units of $\Gamma(S,\mc{O}_{S})$, then by \cite[5.3]{DELIGNE-COURBES-ELLIPTIQUES} one can find a finite \'etale cover $S' \to S$ such that there is an isomorphism $S' \times_{S} E_{1} \simeq S' \times_{S} E_{2}$ of elliptic curves over $S'$. For any connected scheme $S$ and an elliptic curve $E/S$ for which $j(E)$ and $j(E)-1728$ are invertible, we have $\Aut(E/S) \simeq \Z/(2)$ by \cite[(8.4.2)]{KATZ-MAZUR-AMOEC}. It suffices now to show that there is an elliptic curve $E_{U}$ over $U$ with $j$-invariant $t$. For this we may take the elliptic curve $E_{U}$ defined by the Weierstrass equation \[ \textstyle Y^{2}Z + XYZ = X^{3} - \frac{36}{t-1728} XZ^{2} - \frac{1}{t-1728} Z^{3} \] which satisfies $\Delta(E_{U}) = \frac{t^{2}}{(t-1728)^{3}}$ and $j(E_{U}) = t$ (see \cite[Proposition III.1.4(c)]{SILVERMAN}). \end{proof}

\begin{lemma} \label{49} Let $k$ be an algebraically closed field and let $U$ be a smooth curve over $k$. If $\Pic(U) = 0$, then $\Br' \mathrm{B}(\Z/(2))_{U} \simeq (\G_{m}(U))/(2)$. \end{lemma}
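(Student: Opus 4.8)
The plan is to compute $\H^{2}(\mathrm{B}(\Z/(2))_{U},\G_{m})$ directly via the Leray spectral sequence $E_{2}^{p,q} = \H^{p}(U, R^{q}\pi_{\ast}\G_{m}) \Rightarrow \H^{p+q}(\mathrm{B}(\Z/(2))_{U},\G_{m})$ for the coarse map $\pi : \mathrm{B}(\Z/(2))_{U} \to U$, and then to identify its torsion subgroup. First I would pin down the higher direct images $R^{q}\pi_{\ast}\G_{m}$. For a strictly henselian local ring $A$ over $U$ one has $\H^{q}_{\et}(\Spec A,\G_{m}) = 0$ for $q > 0$ (as $\Pic$ and $\Br$ of such vanish), so the local-to-global spectral sequence collapses and the stalk of $R^{q}\pi_{\ast}\G_{m}$ at a geometric point is the group cohomology $\H^{q}(\Z/(2), A^{\times})$ for the trivial action. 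The periodic resolution of the cyclic group $\Z/(2)$ then yields $R^{0}\pi_{\ast}\G_{m} = \G_{m}$, $R^{1}\pi_{\ast}\G_{m} = \G_{m}[2]$ (the subsheaf of square roots of unity), and $R^{2}\pi_{\ast}\G_{m} = \on{coker}(\G_{m}\xrightarrow{2}\G_{m})$.

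Second, I would exploit the section $s : U \to \mathrm{B}(\Z/(2))_{U}$ supplied by the triviality of the gerbe. Since $\pi \circ s = \id$, the pullback $\pi^{\ast}$ on $\G_{m}$-cohomology is split injective, which forces every differential mapping into the bottom row $E_{2}^{\ast,0} = \H^{\ast}(U,\G_{m})$ to vanish; in particular $E_{\infty}^{p,0} = E_{2}^{p,0}$. Combined with the standard fact that $\H^{2}(U,\G_{m}) = 0$ for a smooth curve $U$ over an algebraically closed field (Tsen), this eliminates the $E_{\infty}^{2,0}$ contribution to $\H^{2}$, leaving only $E_{\infty}^{1,1}$ and $E_{\infty}^{0,2}$.

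Third, I would treat the two characteristics separately, since this is exactly where the computation bifurcates. If $\on{char}k \neq 2$, the Kummer sequence shows $\G_{m}\xrightarrow{2}\G_{m}$ is surjective on the \'etale site, so $R^{2}\pi_{\ast}\G_{m} = 0$ while $R^{1}\pi_{\ast}\G_{m} = \mu_{2} \cong \underline{\Z/(2)}$; the Kummer long exact sequence together with $\Pic U = 0$ then gives $E_{2}^{1,1} = \H^{1}(U,\mu_{2}) \cong \G_{m}(U)/2$. If $\on{char}k = 2$, squaring has trivial \'etale kernel on reduced schemes, so $R^{1}\pi_{\ast}\G_{m} = 0$; this kills $E_{2}^{1,1}$ and, crucially, makes $E_{2}^{2,1} = \H^{2}(U,0) = 0$, so the only differential that could leave $E_{2}^{0,2}$ vanishes. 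The short exact sequence $0 \to \G_{m}\xrightarrow{2}\G_{m}\to R^{2}\pi_{\ast}\G_{m}\to 0$ of \'etale sheaves, with $\Pic U = 0$, then gives $E_{2}^{0,2} = \H^{0}(U,R^{2}\pi_{\ast}\G_{m}) \cong \G_{m}(U)/2$. In either case $\H^{2}(\mathrm{B}(\Z/(2))_{U},\G_{m}) \cong \G_{m}(U)/2$, which is $2$-torsion, so it coincides with $\Br'$.

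The main obstacle I anticipate is the bookkeeping in characteristic $2$: the "expected" term $\H^{1}(U,\mu_{2})$ degenerates to $0$, and the Brauer class instead appears in $E_{\infty}^{0,2}$, so I must identify $R^{2}\pi_{\ast}\G_{m}$ correctly and verify that the $d_{2}$-differential out of $E_{2}^{0,2}$ dies --- which it does precisely because $R^{1}\pi_{\ast}\G_{m} = 0$ forces $E_{2}^{2,1} = 0$. It is a pleasant coincidence that both characteristics yield the same group $\G_{m}(U)/2$ even though the cohomological source of the class is different, and I would flag this explicitly to avoid the impression that a single uniform argument suffices.
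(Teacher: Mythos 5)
Your proof is correct, but it runs the computation through the opposite spectral sequence from the one the paper uses, so the two arguments are genuinely different. The paper applies the cohomological descent spectral sequence of the cover $U \to \mathrm{B}(\Z/(2))_{U}$, namely $\on{E}_{2}^{p,q} = \H^{p}(\Z/(2), \H_{\et}^{q}(U,\G_{m})) \Rightarrow \H_{\et}^{p+q}(\mathrm{B}(\Z/(2))_{U},\G_{m})$; the hypotheses $\Pic(U)=0$ and the Tsen-type vanishing $\H_{\et}^{q}(U,\G_{m})=0$ for $q \ge 2$ \cite[III.2.22 (d)]{MILNE-ETALE-COH-BOOK} kill every row except $q=0$, so the answer $\H^{2}(\Z/(2),\G_{m}(U)) \simeq (\G_{m}(U))/(2)$ falls out in one step, uniformly in the characteristic. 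You instead use the Leray spectral sequence of $\pi : \mathrm{B}(\Z/(2))_{U} \to U$, which forces you to identify the sheaves $R^{q}\pi_{\ast}\G_{m}$ as sheafified group cohomology --- this identification is exactly the paper's \Cref{45}, which the paper deploys for the analogous d\'evissage in \Cref{38}, though not in its proof of \Cref{49} --- and then to split into characteristics, since $R^{1}\pi_{\ast}\G_{m}$ is $\mu_{2}$ when $2$ is invertible but vanishes in characteristic $2$, where the class migrates from $\on{E}^{1,1}$ to $\on{E}^{0,2}$. Your handling of both branches is sound: the kernel/cokernel analysis of the squaring map on $\G_{m}$, the use of $\Pic(U)=0$ in the resulting long exact sequences, and the section argument killing all differentials into the bottom row are each correct, and both branches land on $(\G_{m}(U))/(2)$, which is $2$-torsion and therefore equals $\Br'$. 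What the paper's route buys is brevity and uniformity (no case split, no computation of pushforward sheaves, no differentials to chase); what your route buys is a finer picture of where the Brauer class actually lives --- as a Kummer class in $\H^{1}(U,\mu_{2})$ away from characteristic $2$, and as a unit modulo squares in $\H^{0}(U,\G_{m}/\G_{m}^{2})$ in characteristic $2$ --- which is the same structural dichotomy that underlies the paper's separate treatments of the two characteristics in \Cref{sec-02} and \Cref{sec-03}.
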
 \begin{proof} The cohomological descent spectral sequence associated to the cover $U \to \mathrm{B}(\Z/(2))_{U}$ is of the form \begin{align} \label{49-eqn-13} \on{E}_{2}^{p,q} = \H^{p}(\Z/(2) , \H_{\et}^{q}(U,\G_{m})) \implies \H_{\et}^{p+q}(\mathrm{B}(\Z/(2))_{U} , \G_{m}) \end{align} with differentials $\on{E}_{2}^{p,q} \to \on{E}_{2}^{p+2,q-1}$. We have by \cite[III.2.22 (d)]{MILNE-ETALE-COH-BOOK} that $\H_{\et}^{q}(U,\G_{m}) = 0$ for all $q \ge 2$. Moreover, we have $\H_{\et}^{1}(U,\G_{m}) = \Pic(U) = 0$ by assumption. Thus the only row of the $\on{E}_{2}$-page of \labelcref{49-eqn-13} containing nonzero entries is $q=0$, which gives an isomorphism \[ \H_{\et}^{2}(\mathrm{B}(\Z/(2))_{U} , \G_{m}) \simeq \H^{2}(\Z/(2) , \H_{\et}^{0}(U,\G_{m})) \simeq (\G_{m}(U))/(2) \] of abelian groups. \end{proof}

\begin{lemma} \label{51} Let $k$ be an algebraically closed field. If $\on{char} k \ne 2,3$, then $\Br' \ms{M}_{1,1,k}$ is a subgroup of $\Z/(2) \oplus \Z/(2)$. If $\on{char} k$ is $2$ or $3$, then $\Br' \ms{M}_{1,1,k}$ is a subgroup of $\Z/(2)$. \end{lemma}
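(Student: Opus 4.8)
The plan is to bound $\Br' \ms{M}_{1,1,k}$ from above by restricting to the dense open locus on which the $j$-invariant avoids $0$ and $1728$, where \Cref{50} trivializes the gerbe and \Cref{49} computes the Brauer group. Set $U_{k} := \Spec k[t,(t(t-1728))^{-1}]$ and let $\ms{M}_{1,1,k}^{\circ} := U_{k} \times_{\A_{k}^{1}} \ms{M}_{1,1,k}$ be the preimage of $U_{k}$ under $\pi$. Base changing \Cref{50} along $\Z \to k$ gives $\ms{M}_{1,1,k}^{\circ} \simeq \mathrm{B}(\Z/(2))_{U_{k}}$. Since $U_{k}$ is an open subscheme of $\A_{k}^{1}$, it is a smooth curve over $k$ with $\Pic(U_{k}) = 0$, so \Cref{49} applies and gives $\Br' \ms{M}_{1,1,k}^{\circ} \simeq (\G_{m}(U_{k}))/(2)$.

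The crux is to show the restriction map $\Br' \ms{M}_{1,1,k} \to \Br' \ms{M}_{1,1,k}^{\circ}$ is injective. Let $i : \ms{Z} \hookrightarrow \ms{M}_{1,1,k}$ be the reduced closed complement of $\ms{M}_{1,1,k}^{\circ}$; it is the finite disjoint union of the residual gerbes over the points of $\{0,1728\}$ in $\A_{k}^{1}$, each a gerbe $\mathrm{B}G$ over $\Spec k$ banded by the (finite) automorphism group scheme $G$ of the corresponding elliptic curve. Now $\ms{M}_{1,1,k}$ is smooth over $k$, hence regular, and $\ms{Z}$ is regular of codimension $1$; I would feed this into the localization sequence
\[ \H^{2}_{\ms{Z}}(\ms{M}_{1,1,k},\G_{m}) \to \H^{2}_{\et}(\ms{M}_{1,1,k},\G_{m}) \to \H^{2}_{\et}(\ms{M}_{1,1,k}^{\circ},\G_{m}) \]
whose exactness identifies the desired kernel with the image of $\H^{2}_{\ms{Z}}(\ms{M}_{1,1,k},\G_{m})$. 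Absolute purity for $\G_{m}$ along the regular codimension-$1$ pair $(\ms{M}_{1,1,k},\ms{Z})$—an \'etale-local statement that may be checked on a smooth atlas, where it is the classical computation on regular schemes—identifies $R^{q}i^{!}\G_{m}$ with the constant sheaf $\Z$ on $\ms{Z}$ when $q=1$ and with $0$ otherwise, so the spectral sequence $\H^{p}_{\et}(\ms{Z},R^{q}i^{!}\G_{m}) \Rightarrow \H^{p+q}_{\ms{Z}}(\ms{M}_{1,1,k},\G_{m})$ yields $\H^{2}_{\ms{Z}}(\ms{M}_{1,1,k},\G_{m}) \simeq \H^{1}_{\et}(\ms{Z},\Z)$. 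Since $k$ is algebraically closed, this last group is a finite sum of copies of $\Hom(G,\Z)$, each of which vanishes because a finite group scheme admits no nonzero homomorphism to the torsion-free constant sheaf $\Z$; hence $\H^{2}_{\ms{Z}}(\ms{M}_{1,1,k},\G_{m}) = 0$ and the restriction map is injective.

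It remains to compute $(\G_{m}(U_{k}))/(2)$. As $k$ is algebraically closed, $k^{\ast}$ is $2$-divisible, so this group is generated by the classes of the coordinate units cutting out the punctures. If $\on{char} k \ne 2,3$ then $0 \ne 1728$, so $U_{k} = \A_{k}^{1} \setminus \{0,1728\}$ and $\G_{m}(U_{k}) \simeq k^{\ast} \oplus \Z \oplus \Z$ with the free factors generated by $t$ and $t-1728$, giving $(\G_{m}(U_{k}))/(2) \simeq \Z/(2) \oplus \Z/(2)$. If $\on{char} k$ is $2$ or $3$ then $1728 = 0$, so $U_{k} = \G_{m,k}$ and $\G_{m}(U_{k}) \simeq k^{\ast} \oplus \Z$ with free factor generated by $t$, giving $(\G_{m}(U_{k}))/(2) \simeq \Z/(2)$. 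Composing with the injection above yields the two claimed bounds. I expect the only real obstacle to be the purity input in the injectivity step; once the local cohomology is identified with $\H^{1}_{\et}(\ms{Z},\Z)$, its vanishing is immediate.
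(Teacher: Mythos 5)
Your proposal is correct and follows the same skeleton as the paper: restrict to the dense open locus $\ms{M}_{1,1,k}^{\circ}$, apply \Cref{50} and \Cref{49}, and compute $(\G_{m}(U_{k}))/(2)$ exactly as you do (the unit computation, including the collapse $1728 = 0$ in characteristics $2$ and $3$, matches the paper verbatim). The one place you diverge is the injectivity of $\Br' \ms{M}_{1,1,k} \to \Br' \ms{M}_{1,1,k}^{\circ}$: the paper simply cites \cite[2.5(iv)]{ANTIEAU-MEIER-TBGOTMSOEC}, which gives this injectivity for any dense open substack of a regular Noetherian stack, whereas you re-prove it in the case at hand via the localization sequence with supports in the residual gerbes $\ms{Z}$ and a purity computation of $R^{q}i^{!}\G_{m}$. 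Your argument is sound, with two points deserving care that your sketch glosses over: (1) what you need is not Gabber-style ``absolute purity'' but the classical codimension-one statement — $R^{0}i^{!}\G_{m}=0$ and $R^{1}i^{!}\G_{m}=\Z$ come from the divisor sequence on a regular scheme, while $R^{2}i^{!}\G_{m}=0$ comes from the vanishing of $\Pic$ of opens in $\Spec$ of a strictly henselian regular local ring (here a DVR), and these are the only degrees entering $\H^{2}_{\ms{Z}}$, so your blanket claim ``$0$ otherwise'' is an irrelevant overstatement for $q \ge 3$; (2) the identification of $R^{1}i^{!}\G_{m}$ with the \emph{constant} sheaf $\Z$ on the gerbe $\ms{Z} = \mathrm{B}G$ requires that the stabilizer action on $\Z$ be trivial, which holds because the generator is the canonical valuation class of the reduced divisor — this matters, since $\H^{1}(G,\Z)$ with a sign action need not vanish. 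What the two treatments buy: your route is self-contained and makes explicit why the stabilizer groups at $j=0,1728$ are invisible to $\H^{2}(-,\G_{m})$ (only $\Hom(G,\Z)=0$ is needed), while the paper's citation is shorter, avoids setting up $i^{!}$ and cohomology with supports on stacks, and applies uniformly to arbitrary dense opens.
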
 \begin{proof} We have that $\ms{M}_{1,1,k}$ is regular Noetherian and that $\ms{M}_{1,1,k}^{\circ} := \ms{M}_{1,1,\Z}^{\circ} \times_{\Z} k$ is a dense open substack; thus by \cite[2.5(iv)]{ANTIEAU-MEIER-TBGOTMSOEC} the map \[ \Br' \ms{M}_{1,1,k} \to \Br' \ms{M}_{1,1,k}^{\circ} \] induced by restriction is an injection. Here \Cref{50} implies $\Br' \ms{M}_{1,1,k}^{\circ} = \Br' \mathrm{B}(\Z/(2))_{U}$ for $U = \Spec k[t,(t(t-1728))^{-1}]$, and \Cref{49} implies $\Br' \mathrm{B}(\Z/(2))_{U}$ is $\Z/(2) \oplus \Z/(2)$ if $\on{char} k \ne 2,3$ and $\Z/(2)$ otherwise (here we use that $k^{\times} = (k^{\times})^{2}$ since $k$ is algebraically closed). \end{proof}

\section{The case $\on{char} k$ is not $2$} \label{sec-02}

Antieau and Meier \cite{ANTIEAU-MEIER-TBGOTMSOEC} compute the Brauer group $\Br \ms{M}_{1,1,S}$ for various base schemes $S$, including algebraically closed fields $k$ of odd characteristic \cite[11.2]{ANTIEAU-MEIER-TBGOTMSOEC} (the case $\on{char} k \ne 2$ in \Cref{48}). In this section we give a proof via a d\'evissage argument, using the fact that the coarse moduli space morphism $\pi : \ms{M} \to \A_{k}^{1}$ is a trivial $\Z/(2)$-gerbe away from $0,1728 \in \A_{k}^{1}$ (see \Cref{50}). Our proof is divided into two cases, depending on whether $\on{char} k = 3$ or $\on{char} k \ne 3$ (this will determine whether we puncture $\A_{k}^{1}$ at one or two points, respectively). We first fix notation and record some observations that apply to both cases.

\begin{pg} We abbreviate $\ms{M} := \ms{M}_{1,1,k}$. By \Cref{20170321-05}, the Brauer map $\alpha_{\ms{M}} : \Br \ms{M} \to \Br' \ms{M}$ is an isomorphism. By \Cref{51}, the main task is to show that the $2$-torsion in $\Br \ms{M}$ is $0$. \par For any integer $n \ge 1$, the \'etale Kummer sequence \[ 1 \to \mu_{2^{n}} \to \G_{m} \stackrel{\times 2^{n}}{\to} \G_{m} \to 1 \] gives an exact sequence \begin{align} \label{39-eqn-09} 0 \to (\Pic \ms{M}) / (2^{n}) \to \H^{2}(\ms{M},\mu_{2^{n}}) \to \H^{2}(\ms{M} , \G_{m})[2^{n}] \to 0 \end{align} of abelian groups. Since we have $\Pic \ms{M} \simeq \Z/(12)$ by \cite{FULTON-OLSSON-PICARD}, we wish to compute $\H^{2}(\ms{M} , \mu_{2^{n}})$. \par Set \[ U := \Spec k[t,(t(t-1728))^{-1}] = \A_{k}^{1} \setminus \{0,1728\} \] with inclusion $j : U \to \A_{k}^{1}$ and let $i : Z \to \A_{k}^{1}$ be the complement with reduced induced closed subscheme structure. (Thus, if $\on{char} k$ is $2$ or $3$ then $Z \simeq \Spec k$, otherwise $Z \simeq \Spec k \amalg \Spec k$.) Set \begin{align*} \ms{M}^{\circ} &:= U \times_{\A_{k}^{1}} \ms{M} \\ \ms{M}_{Z} &:= Z \times_{\A_{k}^{1}} \ms{M} \end{align*} with projections $\pi^{\circ} : \ms{M}^{\circ} \to U$ and $\pi_{Z} : \ms{M}_{Z} \to Z$. We have a commutative diagram \begin{equation} \label{39-eqn-13} \begin{tikzpicture}[>=angle 90, baseline=(current bounding box.center)] 
\matrix[matrix of math nodes,row sep=2em, column sep=2em, text height=1.5ex, text depth=0.25ex] { 
|[name=11]| \ms{M}^{\circ} & |[name=12]| \ms{M} & |[name=13]| \ms{M}_{Z} \\ 
|[name=21]| U & |[name=22]| \A_{k}^{1} & |[name=23]| Z \\
}; 
\draw[->,font=\scriptsize]
(11) edge (12) (21) edge node[below=-1pt] {$j$} (22) (11) edge node[left=-1pt] {$\pi^{\circ}$} (21) (12) edge node[right=-1pt] {$\pi$} (22) (23) edge node[below=-1pt] {$i$} (22) (13) edge (12) (13) edge node[right=-1pt] {$\pi_{Z}$} (23); \end{tikzpicture} \end{equation} with cartesian squares.

We have a distinguished triangle \begin{align} \label{39-eqn-10} j_{!}j^{\ast}\mathbf{R}\pi_{\ast}\mu_{2^{n}} \to \mathbf{R}\pi_{\ast}\mu_{2^{n}} \to i_{\ast}i^{\ast}\mathbf{R}\pi_{\ast}\mu_{2^{n}} \stackrel{+1}{\to} \end{align} in the derived category of bounded-below complexes of abelian sheaves on the \'etale site of $\A_{k}^{1}$, whose associated long exact sequence has the form \begin{equation} \label{39-eqn-07} \begin{tikzpicture}[>=angle 90, trim left,trim right=0pt, baseline=(current bounding box.center)] 
\matrix[matrix of math nodes,row sep=1em, column sep={8em,between origins}, text height=1.5ex, text depth=0.25ex] { 
|[name=11]| \H^{0}(\A_{k}^{1},j_{!}\mathbf{R}\pi^{\circ}_{\ast}\mu_{2^{n}}) & |[name=12]| \H^{0}(\ms{M},\mu_{2^{n}}) & |[name=13]| \H^{0}(Z,i^{\ast}\mathbf{R}\pi_{\ast}\mu_{2^{n}}) \\ 
|[name=21]| \H^{1}(\A_{k}^{1},j_{!}\mathbf{R}\pi^{\circ}_{\ast}\mu_{2^{n}}) & |[name=22]| \H^{1}(\ms{M},\mu_{2^{n}}) & |[name=23]| \H^{1}(Z,i^{\ast}\mathbf{R}\pi_{\ast}\mu_{2^{n}}) \\ 
|[name=31]| \H^{2}(\A_{k}^{1},j_{!}\mathbf{R}\pi^{\circ}_{\ast}\mu_{2^{n}}) & |[name=32]| \H^{2}(\ms{M},\mu_{2^{n}}) & |[name=33]| \H^{2}(Z,i^{\ast}\mathbf{R}\pi_{\ast}\mu_{2^{n}}) \\ 
}; 
\draw[->,font=\scriptsize]
(11) edge (12) (12) edge (13) (13) edge[gray,out=355,in=175] (21)
(21) edge (22) (22) edge (23) (23) edge[gray,out=355,in=175] (31)
(31) edge (32) (32) edge (33); \end{tikzpicture} \end{equation} since $j^{\ast}\mathbf{R}\pi_{\ast}\mu_{2^{n}} \simeq \mathbf{R}\pi^{\circ}_{\ast}\mu_{2^{n}}$ and \begin{align*} \H^{s}(\A_{k}^{1},\mathbf{R}\pi_{\ast}\mu_{2^{n}}) &\simeq \H^{s}(\ms{M},\mu_{2^{n}}) \\ \H^{s}(\A_{k}^{1},i_{\ast}i^{\ast}\mathbf{R}\pi_{\ast}\mu_{2^{n}}) &\simeq \H^{s}(Z,i^{\ast}\mathbf{R}\pi_{\ast}\mu_{2^{n}}) \end{align*} for all $s$. We will first compute the groups $\H^{s}(\A_{k}^{1},j_{!}j^{\ast}\mathbf{R}\pi_{\ast}\mu_{2^{n}})$ in the left column of \labelcref{39-eqn-07}. \end{pg} 

\begin{lemma} \label{43} Let $k$ be an algebraically closed field, let $x_{1},\dotsc,x_{r} \in \A_{k}^{1}$ be $r$ distinct $k$-points, set \[ Z := \Spec k(x_{1}) \amalg \dotsb \amalg \Spec k(x_{r}) \] and let $U = \A_{k}^{1} \setminus Z$ be the complement with inclusion $j : U \to \A_{k}^{1}$. For any positive integer $\ell$ invertible in $k$, we have \[ \H^{s}(\A_{k}^{1},j_{!}\mu_{\ell}) = \begin{cases} 0 &s \ne 1 \\ (\mu_{\ell}(k))^{\oplus (r-1)}&s = 1 \end{cases}. \] \end{lemma}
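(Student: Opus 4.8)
The plan is to compute $\H^s(\A_k^1, j_!\mu_\ell)$ by relating the cohomology of the extension-by-zero sheaf $j_!\mu_\ell$ on $\A_k^1$ to the cohomology of $\mu_\ell$ on the open curve $U$ and on the whole affine line, using the standard excision long exact sequence. First I would invoke the short exact sequence of étale sheaves on $\A_k^1$
\begin{equation*}
0 \to j_!\mu_\ell \to \mu_\ell \to i_*\mu_\ell \to 0,
\end{equation*}
where $i : Z \to \A_k^1$ is the closed immersion of the $r$ distinct $k$-points; here I use that $\mu_\ell$ is locally constant (since $\ell$ is invertible in $k$) so that its restriction $i^*\mu_\ell$ is just $\mu_\ell$ on $Z$, whose cohomology is $\mu_\ell(k)^{\oplus r}$ in degree $0$ and vanishes in higher degrees because $Z$ is a disjoint union of copies of $\Spec k$ with $k$ algebraically closed. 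The associated long exact sequence then reduces the computation of $\H^s(\A_k^1, j_!\mu_\ell)$ to understanding the restriction map $\H^s(\A_k^1, \mu_\ell) \to \H^s(Z, \mu_\ell)$.

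The key inputs are the known cohomology groups $\H^s(\A_k^1,\mu_\ell)$ and $\H^s(U,\mu_\ell)$. Since $\A_k^1$ is a smooth affine rational curve over an algebraically closed field, one has $\H^0(\A_k^1,\mu_\ell) = \mu_\ell(k)$ and $\H^s(\A_k^1,\mu_\ell)=0$ for $s \ge 1$ (the Picard group is trivial, $\G_m(\A_k^1)=k^\times$ is $\ell$-divisible, and higher cohomology of $\G_m$ vanishes). Feeding these into the long exact sequence, the only possibly nonzero connecting map is $\H^0(Z,\mu_\ell) \to \H^1(\A_k^1, j_!\mu_\ell)$, and the sequence degenerates to
\begin{equation*}
0 \to \mu_\ell(k) \xrightarrow{\;\mathrm{diag}\;} \mu_\ell(k)^{\oplus r} \to \H^1(\A_k^1, j_!\mu_\ell) \to 0,
\end{equation*}
with all other groups $\H^s(\A_k^1, j_!\mu_\ell)$ vanishing. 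The map $\H^0(\A_k^1,\mu_\ell) \to \H^0(Z,\mu_\ell)$ is the diagonal restriction of a global $\ell$-th root of unity to each of the $r$ points, which is injective with cokernel $\mu_\ell(k)^{\oplus(r-1)}$; this yields $\H^1(\A_k^1, j_!\mu_\ell) \simeq \mu_\ell(k)^{\oplus(r-1)}$ and completes the computation.

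The main obstacle is keeping track of the precise form of the connecting maps and confirming that the restriction $\H^0(\A_k^1,\mu_\ell)\to\H^0(Z,\mu_\ell)$ is genuinely the diagonal embedding, so that the cokernel is $\mu_\ell(k)^{\oplus(r-1)}$ rather than the full $\mu_\ell(k)^{\oplus r}$; this is where the hypothesis that $\A_k^1$ is \emph{connected} (so that a global section of $\mu_\ell$ is a single root of unity, determining the same value at every point of $Z$) is essential. A minor point to verify is the vanishing of $\H^s(\A_k^1,\mu_\ell)$ for $s \ge 1$, which I would cite from the Kummer sequence together with the vanishing of $\Pic(\A_k^1)$ and $\H^{\ge 2}_\et(\A_k^1,\G_m)$ on a smooth affine curve over an algebraically closed field, exactly as used in \Cref{49}.
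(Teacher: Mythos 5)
Your proposal is correct and follows essentially the same route as the paper's proof: the extension-by-zero short exact sequence $0 \to j_{!}\mu_{\ell} \to \mu_{\ell} \to i_{\ast}i^{\ast}\mu_{\ell} \to 0$ (the paper phrases it as a distinguished triangle), the identification of $\H^{0}(\A_{k}^{1},\mu_{\ell}) \to \H^{0}(Z,\mu_{\ell})$ with the diagonal $\mu_{\ell}(k) \to (\mu_{\ell}(k))^{\oplus r}$, and the vanishing of $\H^{s}(Z,\mu_{\ell})$ for $s \ge 1$ and of $\H^{s}(\A_{k}^{1},\mu_{\ell})$ for $s \ge 1$ via the Kummer sequence, $\Pic(\A_{k}^{1}) = 0$, and divisibility of $k^{\times}$. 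The only cosmetic difference is the citation for higher-degree vanishing (the paper quotes SGA 4$\frac{1}{2}$ directly for $\H^{s}(\A_{k}^{1},\mu_{\ell}) = 0$, $s \ge 2$, while you deduce it from vanishing of higher $\G_{m}$-cohomology on a smooth affine curve), which is immaterial.
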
 \begin{proof} Let $i : Z \to \A_{k}^{1}$ be the inclusion. We have a distinguished triangle \[ j_{!}\mu_{\ell}|_{U} \to \mu_{\ell} \to i_{\ast}i^{\ast}\mu_{\ell} \stackrel{+1}{\to} \] in the derived category of bounded-below complexes of abelian sheaves on the big \'etale site of $\A_{k}^{1}$, which gives a long exact sequence \begin{center} \begin{tikzpicture}[>=angle 90, trim left,trim right=0pt] 
\matrix[matrix of math nodes,row sep=1.3em, column sep={7.5em,between origins}, text height=1.5ex, text depth=0.25ex] { 
|[name=11]| \H^{0}(\A_{k}^{1},j_{!}\mu_{\ell}|_{U}) & |[name=12]| \H^{0}(\A_{k}^{1},\mu_{\ell}) & |[name=13]| \H^{0}(Z,\mu_{\ell}) \\ 
|[name=21]| \H^{1}(\A_{k}^{1},j_{!}\mu_{\ell}|_{U}) & |[name=22]| \H^{1}(\A_{k}^{1},\mu_{\ell}) & |[name=23]| \H^{1}(Z,\mu_{\ell}) \\ 
|[name=31]| \H^{2}(\A_{k}^{1},j_{!}\mu_{\ell}|_{U}) & |[name=32]| \H^{2}(\A_{k}^{1},\mu_{\ell}) & |[name=33]| \H^{2}(Z,\mu_{\ell}) \\ 
|[name=41]| \H^{3}(\A_{k}^{1},j_{!}\mu_{\ell}|_{U}) & |[name=42]| \dotsb & |[name=43]|  \\ 
}; 
\draw[->,font=\scriptsize]
(11) edge (12) (12) edge (13) (13) edge[gray,out=355,in=175] (21)
(21) edge (22) (22) edge (23) (23) edge[gray,out=355,in=175] (31)
(31) edge (32) (32) edge (33) (33) edge[gray,out=355,in=175] (41)
(41) edge (42); \end{tikzpicture} \end{center} in cohomology. The map $\H^{0}(\A_{k}^{1},\mu_{\ell}) \to \H^{0}(Z,\mu_{\ell})$ is identified with the diagonal map $\mu_{\ell}(k) \to (\mu_{\ell}(k))^{\oplus r}$. Since $k$ is algebraically closed, the etale site of $Z$ is trivial, hence $\H^{s}(Z,\mu_{\ell}) = 0$ for $s \ge 1$. By \cite[Exp. 1, III, (3.6)]{SGA4.5} we have $\H^{s}(\A_{k}^{1},\mu_{\ell}) = 0$ for $s \ge 2$. We have $\G_{m}(\A_{k}^{1}) \simeq \G_{m}(k)$ and the multiplication-by-$\ell$ map $\times \ell : \G_{m}(k) \to \G_{m}(k)$ is surjective; thus $\H^{1}(\A_{k}^{1},\mu_{\ell}) = \H^{1}(\A_{k}^{1},\G_{m})[\ell] = (\Pic \A_{k}^{1})[\ell] = 0$ by the Kummer sequence. \end{proof}

\begin{lemma} \label{38} In the setup of \Cref{43}, let $n$ be any positive integer and let $\pi^{\circ} : \mathrm{B}(\Z/(n))_{U} \to U$ be the trivial $\Z/(n)$-gerbe over $U$. Then \[ \H^{s}(\A_{k}^{1} , j_{!}\mathbf{R}\pi^{\circ}_{\ast}\mu_{\ell}) = \begin{cases} 0 &\text{if }s = 0\;, \\ (\mu_{\ell}(k))^{\oplus (r-1)} &\text{if }s = 1\;, \\ (\mu_{\gcd(n,\ell)}(k))^{\oplus (r-1)} &\text{if }s = 2\;. \end{cases} \] \end{lemma}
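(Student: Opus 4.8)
The plan is to compute the cohomology sheaves $R^{q}\pi^{\circ}_{\ast}\mu_{\ell}$ on $U$, apply the exact functor $j_{!}$, and then run the hypercohomology spectral sequence over $\A_{k}^{1}$, feeding each row into \Cref{43}. Since $\pi^{\circ}$ is a \emph{trivial} gerbe, I expect its higher direct images to be the group cohomology of $\Z/(n)$ acting trivially on $\mu_{\ell}$; \Cref{43} should then force the spectral sequence to collapse onto a single column, after which the three claimed values can be read off directly.

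First I would identify $\mathbf{R}\pi^{\circ}_{\ast}\mu_{\ell}$. The atlas $a : U \to \mathrm{B}(\Z/(n))_{U}$ is the trivial $\Z/(n)$-torsor; it is a section of $\pi^{\circ}$, it is finite \'etale since $\Z/(n)$ is a constant group scheme, and $U \times_{\mathrm{B}(\Z/(n))_{U}} U \simeq (\Z/(n)) \times U$. Relative cohomological descent along this covering identifies $\mathbf{R}\pi^{\circ}_{\ast}\mu_{\ell}$ with the group-cochain complex $C^{\bullet}(\Z/(n),\mu_{\ell})$ of sheaves on $U$, where $\Z/(n)$ acts trivially on $\mu_{\ell}$ (the roots of unity are pulled back from the base and so are fixed by the gerbe automorphisms). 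Hence $R^{q}\pi^{\circ}_{\ast}\mu_{\ell} \simeq \H^{q}(\Z/(n),\mu_{\ell})$ with trivial coefficients: this is $\mu_{\ell}$ for $q = 0$, the kernel $\mu_{\ell}[n]$ of multiplication by $n$ for $q$ odd, and the cokernel $\mu_{\ell}/(n)$ for $q$ even and positive. Since $\ell$ is invertible and $k = \overline{k}$, the sheaf $\mu_{\ell}$ is (noncanonically) $\Z/(\ell)$, and both the kernel and cokernel of $\times n$ on $\Z/(\ell)$ are cyclic of order $\gcd(n,\ell)$; thus $R^{0}\pi^{\circ}_{\ast}\mu_{\ell} \simeq \mu_{\ell}$ and $R^{q}\pi^{\circ}_{\ast}\mu_{\ell} \simeq \mu_{\gcd(n,\ell)}$ for every $q \ge 1$. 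Note $\gcd(n,\ell)$ is again invertible in $k$.

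Next I would apply $j_{!}$ and take hypercohomology on $\A_{k}^{1}$. As $j_{!}$ is exact it commutes with taking cohomology sheaves, so the hypercohomology spectral sequence reads
\begin{align*}
\on{E}_{2}^{p,q} = \H^{p}(\A_{k}^{1} , j_{!}R^{q}\pi^{\circ}_{\ast}\mu_{\ell}) \implies \H^{p+q}(\A_{k}^{1} , j_{!}\mathbf{R}\pi^{\circ}_{\ast}\mu_{\ell}) \;.
\end{align*}
By \Cref{43} (applicable since $\ell$ and $\gcd(n,\ell)$ are invertible), every term with $p \ne 1$ vanishes, while $\on{E}_{2}^{1,0} = (\mu_{\ell}(k))^{\oplus (r-1)}$ and $\on{E}_{2}^{1,q} = (\mu_{\gcd(n,\ell)}(k))^{\oplus (r-1)}$ for $q \ge 1$. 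The $\on{E}_{2}$-page is concentrated in the single column $p = 1$, so all differentials vanish and the sequence degenerates, giving $\H^{s} \simeq \on{E}_{2}^{1,s-1}$. Reading off $s = 0,1,2$ yields $0$, $(\mu_{\ell}(k))^{\oplus (r-1)}$, and $(\mu_{\gcd(n,\ell)}(k))^{\oplus (r-1)}$ respectively, exactly as claimed.

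The main obstacle is the first step: justifying cleanly that the derived pushforward along the trivial gerbe is computed by the group cohomology of $\Z/(n)$ with trivial action on $\mu_{\ell}$, and correctly identifying $\mu_{\ell}[n]$ and $\mu_{\ell}/(n)$ with $\mu_{\gcd(n,\ell)}$. Once these cohomology sheaves are in hand, the rest is a formal consequence of \Cref{43} and the collapse of the spectral sequence onto the column $p=1$.
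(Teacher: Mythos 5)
Your proposal is correct and matches the paper's argument in all essentials: the paper likewise identifies $\mathbf{R}^{s}\pi^{\circ}_{\ast}\mu_{\ell}$ with the group cohomology of $\Z/(n)$ acting trivially on $\mu_{\ell}$ (via \Cref{45}, its version of your descent-along-the-atlas computation), notes that $\mu_{\ell}[n]$ and $\mu_{\ell}/(n)$ are both isomorphic to $\mu_{\gcd(n,\ell)}$ over $k = \overline{k}$, and then feeds each cohomology sheaf $j_{!}\mathbf{R}^{s}\pi^{\circ}_{\ast}\mu_{\ell}$ into \Cref{43}. The only difference is packaging: where you invoke the hypercohomology spectral sequence (concentrated in the column $p=1$, hence degenerate), the paper unwinds the same d\'evissage by hand using the canonical truncation triangles $\tau_{\le s-1}\mc{C} \to \tau_{\le s}\mc{C} \to (h^{s}\mc{C})[-s]$ and their long exact sequences.
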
 \begin{proof} We set \[ \mc{C} := j_{!}\mathbf{R}\pi^{\circ}_{\ast}\mu_{\ell} \] for convenience. We will compute the groups $\H^{s}(\A_{k}^{1},\mc{C})$ using the fact that the canonical truncations $\tau_{\le s}\mc{C}$ satisfy \begin{align} \label{39-eqn-12} \H^{s}(\A_{k}^{1},\tau_{\le t}\mc{C}) \simeq \H^{s}(\A_{k}^{1},\mc{C}) \end{align} for $s \le t$. For any $s \in \Z$, the distinguished triangle \begin{align} \label{39-eqn-06} \tau_{\le s-1}\mc{C} \to \tau_{\le s}\mc{C} \to (h^{s}\mc{C})[-s] \stackrel{+1}{\to} \end{align} gives a long exact sequence \begin{equation} \label{39-eqn-05} \begin{tikzpicture}[>=angle 90, trim left,trim right=0pt, baseline=(current bounding box.center)] 
\matrix[matrix of math nodes,row sep=1.3em, column sep=2em, text height=1.5ex, text depth=0.25ex] { 
|[name=11]| \H^{0}(\A_{k}^{1},\tau_{\le s-1}\mc{C}) & |[name=12]| \H^{0}(\A_{k}^{1},\tau_{\le s}\mc{C}) & |[name=13]| \H^{0-s}(\A_{k}^{1},j_{!}\mathbf{R}^{s}\pi^{\circ}_{\ast}\mu_{\ell}) \\ 
|[name=21]| \H^{1}(\A_{k}^{1},\tau_{\le s-1}\mc{C}) & |[name=22]| \H^{1}(\A_{k}^{1},\tau_{\le s}\mc{C}) & |[name=23]| \H^{1-s}(\A_{k}^{1},j_{!}\mathbf{R}^{s}\pi^{\circ}_{\ast}\mu_{\ell}) \\ 
|[name=31]| \H^{2}(\A_{k}^{1},\tau_{\le s-1}\mc{C}) & |[name=32]| \H^{2}(\A_{k}^{1},\tau_{\le s}\mc{C}) & |[name=33]| \H^{2-s}(\A_{k}^{1},j_{!}\mathbf{R}^{s}\pi^{\circ}_{\ast}\mu_{\ell}) \\ 
}; 
\path[->,font=\scriptsize]
(11) edge (12) (12) edge (13) (13.east) edge[gray,out=350,in=170,looseness=0.8] (21.west)
(21) edge (22) (22) edge (23) (23.east) edge[gray,out=350,in=170,looseness=0.8] (31.west)
(31) edge (32) (32) edge (33); \end{tikzpicture} \end{equation} where \[ h^{s}\mc{C} \simeq j_{!}\mathbf{R}^{s}\pi^{\circ}_{\ast}\mu_{\ell} \] since $j_{!}$ is exact. \par Since $\pi^{\circ} : \mathrm{B}(\Z/(n))_{U} \to U$ is a trivial $\Z/(n)$-gerbe, by \Cref{45} we have \begin{align} \label{39-eqn-11} \mathbf{R}^{s}\pi^{\circ}_{\ast}\mu_{\ell} \simeq \begin{cases} \mu_{\ell} & s = 0 \\ \mu_{\ell}[n] & s = 1,3,5,\dotsc \\ \mu_{\ell}/(n) & s = 2,4,6,\dotsc \end{cases} \end{align} where $\mu_{\ell}[n]$ and $\mu_{\ell}/(n)$ are defined by the exact sequence \[ 1 \to \mu_{\ell}[n] \to \mu_{\ell} \stackrel{\times n}{\to} \mu_{\ell} \to \mu_{\ell}/(n) \to 1 \] of abelian sheaves. Since $k$ is algebraically closed of characteristic prime to $\ell$, the sheaves $\mu_{\ell}[n]$ and $\mu_{\ell}/(n)$ are both isomorphic to $\mu_{\gcd(n,\ell)}$, but for us the difference is important for reasons of functoriality (as $\ell$ is allowed to vary). More precisely, if $\ell_{1}$ divides $\ell_{2}$, then the inclusion $\mu_{\ell_{1}} \to \mu_{\ell_{2}}$ induces an inclusion \[ \mu_{\ell_{1}}[n] \to \mu_{\ell_{2}}[n] \] whereas \begin{align} \label{38-eqn-01} \mu_{\ell_{1}}/(n) \to \mu_{\ell_{2}}/(n) \end{align} is not necessarily injective since an element $x \in \mu_{\ell_{1}}$ which is not an $n$th power of any $y_{1} \in \mu_{\ell_{1}}$ may be an $n$th power of some $y_{2} \in \mu_{\ell_{2}}$ (in particular, if $\ell_{2} = n \ell_{1}$, then \labelcref{38-eqn-01} is the zero morphism). \par We have \[ \tau_{\le 0}\mc{C} \simeq h^{0}\mc{C} \simeq j_{!}\mathbf{R}^{0}\pi^{\circ}_{\ast}\mu_{\ell} \simeq j_{!}\pi^{\circ}_{\ast}\mu_{\ell} \simeq j_{!}\mu_{\ell} \] since $\pi^{\circ}$ is a coarse moduli space morphism and $\mathbf{R}^{1}\pi^{\circ}_{\ast}\mu_{\ell} \simeq \mu_{\gcd(n,\ell)}$ by \labelcref{39-eqn-11}. Applying \Cref{43} to the case $s = 1$ in \labelcref{39-eqn-05} implies $\H^{0}(\A_{k}^{1},\tau_{\le 1}\mc{C}) = 0$ and gives isomorphisms $\H^{1}(\A_{k}^{1},j_{!}\mu_{\ell}) \simeq \H^{1}(\A_{k}^{1},\tau_{\le 1}\mc{C})$ and $\H^{2}(\A_{k}^{1},\tau_{\le 1}\mc{C}) \simeq \H^{1}(\A_{k}^{1},j_{!}\mu_{\gcd(n,\ell)})$. \par Since $\mathbf{R}^{2}\pi^{\circ}_{\ast}\mu_{\ell} \simeq \mu_{\gcd(n,\ell)}$ by \labelcref{39-eqn-11} and $\H^{s}(\A_{k}^{1},j_{!}\mu_{\gcd(n,\ell)}) = 0$ for $s = -2,-1,0$, the case $s = 2$ in \labelcref{39-eqn-05} gives isomorphisms $\H^{s}(\A_{k}^{1},\tau_{\le 1}\mc{C}) \simeq \H^{s}(\A_{k}^{1},\tau_{\le 2}\mc{C})$ for $s = 0,1,2$, which implies the desired result. \end{proof}

\begin{pg}[Proof of {\Cref{48}} for $\on{char} k = 3$] If $\on{char} k = 3$, then $Z$ consists of one point, so taking $r = 1$ in \Cref{38} implies \begin{align} \label{39-eqn-14} \H^{s}(\A_{k}^{1},j_{!}\mathbf{R}\pi^{\circ}_{\ast}\mu_{2^{n}}) = 0 \end{align} for $s = 0,1,2$. Therefore, to compute $\H^{2}(\ms{M},\mu_{2^{n}})$, it now remains to compute $\H^{2}(Z,i^{\ast}\mathbf{R}\pi_{\ast}\mu_{2^{n}})$ in \labelcref{39-eqn-07}. The stabilizer of any object of $\ms{M}$ of lying over $i : Z \to \A_{k}^{1}$ is the automorphism group of an elliptic curve with $j$-invariant $0$, which is the semidirect product $\Gamma = \Z/(3) \rtimes \Z/(4)$ since $k$ has characteristic $3$. The underlying reduced stack $(\ms{M}_{Z})_{\on{red}}$ is the residual gerbe associated to the unique point of $|\ms{M}_{Z}|$ and is isomorphic to the classifying stack $\mathrm{B}\Gamma_{k}$. We have natural isomorphisms \[ \H^{2}(Z,i^{\ast}\mathbf{R}\pi_{\ast}\mu_{2^{n}}) \simeq i^{\ast}\mathbf{R}^{2}\pi_{\ast}\mu_{2^{n}} \stackrel{1}{\simeq} \H^{2}(\ms{M}_{Z} , \mu_{2^{n}}) \stackrel{2}{\simeq} \H^{2}(\mathrm{B}\Gamma_{k} , \mu_{2^{n}}) \stackrel{3}{\simeq} \H^{2}(\Gamma , \mu_{2^{n}}(k)) \] where isomorphism 1 follows from proper base change \cite[1.3]{OLSSON-ONPROPERCOVERINGSOFARTINSTACKS}, isomorphism 2 is by invariance of \'etale site for nilpotent thickenings and the fact that $2^{n}$ is invertible on $\ms{M}_{Z}$, and isomorphism 3 is by the cohomological descent spectral sequence for the covering $\Spec k \to \mathrm{B}\Gamma_{k}$ (and the fact that $\H^{i}(\Spec k , \mu_{2^{n}}) = 0$ for $i > 0$ since $k$ is algebraically closed). The Hochschild-Serre spectral sequence for the exact sequence \[ 1 \to \Z/(3) \to \Gamma \to \Z/(4) \to 1 \] gives an isomorphism \[ \H^{2}(\Gamma , \mu_{2^{n}}(k)) \simeq \H^{2}(\Z/(4),\mu_{2^{n}}(k)) \simeq \mu_{2^{n}}(k)/(4) \] where $\H^{i}(\Z/(3) , \mu_{2^{n}}(k)) = 0$ for $i > 0$ since $3$ is coprime to the order of $\mu_{2^{n}}(k)$. Since the first term in the last row of the diagram \labelcref{39-eqn-07} is zero by \labelcref{39-eqn-14}, the above observations imply that we have natural inclusions \[ \H^{2}(\ms{M},\mu_{2^{n}}) \to \mu_{2^{n}}(k)/(4) \] compatible with the inclusions $\mu_{2^{n}} \subset \mu_{2^{n+1}}$ for all $n$. The inclusion  $\mu_{2^{n}} \subset \mu_{2^{n+2}}$ induces the zero map $\mu_{2^{n}}(k)/(4) \to \mu_{2^{n+2}}(k)/(4)$, so $\H^{2}(\ms{M},\mu_{2^{n}}) \to \H^{2}(\ms{M},\mu_{2^{n+2}})$ is the zero map as well, hence \[ \textstyle \varinjlim_{n \in \N} \H^{2}(\ms{M},\mu_{2^{n}}) = 0 \] which by \labelcref{39-eqn-09} gives $\H^{2}(\ms{M},\G_{m})[2^{n}] = 0$ for all $n$. \end{pg}

\begin{pg}[Proof of {\Cref{48}}, for $\on{char} k \ne 2,3$] We describe the terms in \labelcref{39-eqn-07}. For the right column, we have \[ \H^{s}(Z,i^{\ast}\mathbf{R}\pi_{\ast}\mu_{2^{n}}) \simeq \H^{s}(\Z/(4) , \mu_{2^{n}}(k)) \oplus \H^{s}(\Z/(6) , \mu_{2^{n}}(k)) \] by \cite[A.0.7]{ABRAMOVICH-CORTI-VISTOLI-TWISTED-BUNDLES-AND-ADMISSIBLE-COVERS}. For the middle column, we have \[ \H^{0}(\ms{M},\mu_{2^{n}}) \simeq \H^{0}(\A_{k}^{1},\mu_{2^{n}}) \simeq \mu_{2^{n}}(k) \] since $\A_{k}^{1}$ is the coarse moduli space of $\ms{M}$, and we have \[ \H^{1}(\ms{M},\mu_{2^{n}}) \stackrel{1}{\simeq} \H^{1}(\ms{M},\G_{m})[2^{n}] \stackrel{2}{\simeq} (\Z/(12))[2^{n}] \stackrel{3}{\simeq} \Z/(4) \] where isomorphism 1 follows since $k^{\times} = (k^{\times})^{2^{n}}$, isomorphism 2 is by \cite{MUMFORD-PGOMP}, and isomorphism 3 holds for $n \gg 0$. For the left column, we have \[ \H^{s}(\tau_{\le 1}j_{!}\mathbf{R}\pi^{\circ}_{\ast}\mu_{2^{n}}) = \begin{cases} 0 & s=0 \\ \mu_{2^{n}} & s=1 \\ \mu_{2} & s=2 \end{cases} \] by \Cref{38}. \par To summarize, \labelcref{39-eqn-07} simplifies to \begin{equation} \label{39-eqn-08} \begin{tikzpicture}[>=angle 90, trim left,trim right=0pt, baseline=(current bounding box.center)] 
\matrix[matrix of math nodes,row sep=1.3em, column sep={8.5em,between origins}, text height=1.5ex, text depth=0.25ex] { 
|[name=11]| 0 & |[name=12]| \mu_{2^{n}} & |[name=13]| \mu_{2^{n}} \oplus \mu_{2^{n}} \\ 
|[name=21]| \mu_{2^{n}} & |[name=22]| \Z/(4) & |[name=23]| \mu_{4} \oplus \mu_{2} \\ 
|[name=31]| \mu_{2} & |[name=32]| \H^{2}(\ms{M},\mu_{2^{n}}) & |[name=33]| \mu_{2^{n}}/(4) \oplus \mu_{2^{n}}/(6) \\ 
}; 
\draw[->,font=\scriptsize]
(11) edge (12) (12) edge (13) (13) edge[gray,out=355,in=175] (21)
(21) edge (22) (22) edge (23) (23) edge[gray,out=355,in=175] (31)
(31) edge (32) (32) edge (33); \end{tikzpicture} \end{equation} for $n \gg 0$, and counting the number of elements in each group in \labelcref{39-eqn-08} implies that the last morphism \[ \H^{2}(\ms{M},\mu_{2^{n}}) \to \mu_{2^{n}}/(4) \oplus \mu_{2^{n}}/(6) \] is injective. Furthermore, the inclusion \[ \mu_{2^{n}} \subset \mu_{2^{n+2}} \] induces the zero map \[ \mu_{2^{n}}/(4) \oplus \mu_{2^{n}}/(6) \to \mu_{2^{n+2}}/(4) \oplus \mu_{2^{n+2}}/(6) \] so the map $\H^{2}(\ms{M},\mu_{2^{n}}) \to \H^{2}(\ms{M},\mu_{2^{n+2}})$ is the zero map as well, hence \[ \textstyle \varinjlim_{n \in \N} \H^{2}(\ms{M},\mu_{2^{n}}) = 0 \] which by \labelcref{39-eqn-09} gives $\H^{2}(\ms{M},\G_{m})[2^{n}] = 0$ for all $n$. \end{pg}

\section{The case $\on{char} k$ is $2$} \label{sec-03}

In this section we prove \Cref{48} (in case $\on{char} k = 2$) and \Cref{20170322-02}. For convenience, we denote $\GL_{n,p} := \GL_{n}(\Z/(p))$ and $\SL_{n,p} := \SL_{n}(\Z/(p))$. We denote by $\mathsf{e}$ the identity element of $\GL_{n,p}$. 

\begin{pg}[Hesse presentation of $\ms{M}_{1,1,k}$] \label{04-01} By \cite[6.2]{FULTON-OLSSON-PICARD} (and explained in more detail in \Cref{25}), there is a left action of $\GL_{2,3}$ on the $\Z[\frac{1}{3}]$-algebra \[ \textstyle A_{\mr{H}} := \Z[\frac{1}{3},\mu,\omega,\frac{1}{\mu^{3}-1}]/(\omega^{2}+\omega+1) \] sending \begin{equation} \label{04-eqn-03} \begin{aligned} \begin{bmatrix} 1 & 0 \\ 0 & -1 \end{bmatrix} \ast (\mu,\omega) &= (\mu,\omega^{2}) \\ \begin{bmatrix} 1 & 0 \\ -1 & 1 \end{bmatrix} \ast (\mu,\omega) &= (\omega\mu,\omega) \\ \begin{bmatrix} 0 & -1 \\ 1 & 0 \end{bmatrix} \ast (\mu,\omega) &= \textstyle (\frac{\mu+2}{\mu-1} , \omega) \end{aligned} \end{equation} for which the corresponding right action of $\GL_{2,3}$ on the $\Z[\frac{1}{3}]$-scheme \[ S_{\mr{H}} := \Spec A_{\mr{H}} \] gives a presentation \begin{align} \label{04-eqn-01} \ms{M}_{1,1,\Z[\frac{1}{3}]} \simeq [S_{\mr{H}}/\GL_{2,3}] \end{align} of $\ms{M}_{1,1,\Z[\frac{1}{3}]}$ as a global quotient stack. The morphism \begin{align} \label{04-eqn-09} S_{\mr{H}} \to \ms{M}_{1,1,\Z[\frac{1}{3}]} \end{align} is given by the elliptic curve \[ X^{3} + Y^{3} + Z^{3} = 3\mu XYZ \] over $S_{\mr{H}}$. \end{pg}

\begin{pg}[Cohomological descent] \label{04-02} Let $k$ be an algebraically closed field of characteristic $2$. The Brauer map $\alpha_{\ms{M}_{1,1,k}} : \Br \ms{M}_{1,1,k} \to \Br' \ms{M}_{1,1,k}$ is an isomorphism by \Cref{20170321-05}. By \Cref{51}, there is only $2$-torsion in $\Br \ms{M}_{1,1,k}$. By Grothendieck's fppf-\'etale comparison theorem for smooth commutative group schemes \cite[(11.7)]{Gro68c}, it suffices to compute the $2$-torsion in $\H_{\on{fppf}}^{2}(\ms{M}_{1,1,k} , \G_{m})$. Since $\Spec k$ is a reduced scheme, we have \[ \H_{\on{fppf}}^{1}(\ms{M}_{1,1,k} , \G_{m}) = \Pic(\ms{M}_{1,1,k}) = \Z/(12) \] by \cite[1.1]{FULTON-OLSSON-PICARD}. Thus, for any integer $n$, the fppf Kummer sequence \begin{align} \label{04-eqn-20} 1 \to \mu_{2} \to \G_{m} \stackrel{\times 2}{\to} \G_{m} \to 1 \end{align} gives an exact sequence \begin{align} \label{04-eqn-02} 1 \to \Z/(2) \stackrel{\partial}{\to} \H_{\on{fppf}}^{2}(\ms{M}_{1,1,k} , \mu_{2}) \to \H_{\on{fppf}}^{2}(\ms{M}_{1,1,k} , \G_{m})[2] \to 1 \end{align} of abelian groups. It remains to compute the middle term $\H_{\on{fppf}}^{2}(\ms{M}_{1,1,k} , \mu_{2})$. \par The cohomological descent spectral sequence associated to the cover \labelcref{04-eqn-09} is of the form \begin{align} \label{04-eqn-04} \on{E}_{2}^{p,q} = \H^{p}(\GL_{2,3} , \H_{\on{fppf}}^{q}(S_{\mr{H},k},\mu_{2})) \implies \H_{\on{fppf}}^{p+q}(\ms{M}_{1,1,k} , \mu_{2}) \end{align} with differentials $\on{E}_{2}^{p,q} \to \on{E}_{2}^{p+2,q-1}$. \par Let \[ \xi \in k \] be a fixed primitive $3$rd root of unity. By the Chinese Remainder Theorem, there is a $k$-algebra isomorphism \begin{align} \label{04-eqn-05} \textstyle A_{\mr{H},k} = k[\mu,\omega,\frac{1}{\mu^{3}-1}]/(\omega^{2}+\omega+1) \to k[\nu_{1},\frac{1}{\nu_{1}^{3}-1}] \times k[\nu_{2},\frac{1}{\nu_{2}^{3}-1}] \end{align} sending $\mu \mapsto (\nu_{1},\nu_{2})$ and $\omega \mapsto (\xi,\xi^{2})$. Since $S_{\mr{H},k}$ is a smooth curve over an algebraically closed field, we have by \cite[III.2.22 (d)]{MILNE-ETALE-COH-BOOK} that $\H_{\et}^{q}(S_{\mr{H},k},\G_{m}) = 0$ for all $q \ge 2$; since $S_{\mr{H},k}$ is a disjoint union of two copies of a distinguished affine open subset of $\A_{k}^{1}$, we have $\H_{\et}^{1}(S_{\mr{H},k},\G_{m}) = \Pic(S_{\mr{H},k}) = 0$. By \cite[(11.7)]{Gro68c} we have $\H_{\on{fppf}}^{q}(S_{\mr{H},k},\G_{m}) = \H_{\et}^{q}(S_{\mr{H},k},\G_{m})$ for all $q \ge 0$; thus the fppf Kummer sequence implies $\H_{\on{fppf}}^{q}(S_{\mr{H},k},\mu_{2}) = 0$ for all $q \ge 2$. Furthermore, we have $\H_{\on{fppf}}^{0}(S_{\mr{H},k},\mu_{2}) = 0$ since $S_{\mr{H},k}$ is the product of two integral domains of characteristic $2$. Thus the only nonzero terms on the $\on{E}_{2}$-page of \labelcref{04-eqn-04} occur on the $q=1$ row, so we have an isomorphism \begin{align} \label{04-eqn-12} \H_{\on{fppf}}^{p+1}(\ms{M}_{1,1,k} , \mu_{2}) \simeq \H^{p}(\GL_{2,3} , \H_{\on{fppf}}^{1}(S_{\mr{H},k},\mu_{2})) \end{align} for all $p \ge 0$. We are interested in the case $p = 1$. \end{pg}

\begin{pg}[Description of the $\GL_{2,3}$-action on $\H_{\on{fppf}}^{1}(S_{\mr{H},k},\mu_{2})$] \label{04-04} We describe the abelian group \[ M := \H_{\on{fppf}}^{1}(S_{\mr{H},k},\mu_{2}) \] and the left $\GL_{2,3}$-module structure it inherits from \labelcref{04-eqn-03}. Since $k[\mu,(\mu^{3}-1)^{-1}]$ is a principal localization of the polynomial ring $k[\mu]$ by a polynomial $\mu^{3}-1 = (\mu-1)(\mu-\xi)(\mu-\xi^{2})$ splitting into three distinct irreducible factors, we have an isomorphism \begin{align} \label{04-eqn-06} \textstyle (k[\mu,\frac{1}{\mu^{3}-1}])^{\times} \simeq k^{\times} \cdot (\mu-1)^{\Z} \cdot (\mu-\xi)^{\Z} \cdot (\mu-\xi^{2})^{\Z} \end{align} of abelian groups. Thus \labelcref{04-eqn-05} and the Kummer sequence \labelcref{04-eqn-20} gives an isomorphism \begin{align} \label{04-eqn-07} M \simeq (\Z/(2))^{\oplus 6} \end{align} of abelian groups, with generators given by the classes of $\nu_{i} - \xi^{j}$ for $i=1,2$ and $j = 0,1,2$. \par The isomorphism \labelcref{04-eqn-05} is given by the map \begin{align} \label{04-02-eqn-22} s_{1}(\mu)\omega + s_{0}(\mu) \mapsto \big( s_{1}(\nu_{1})\xi+s_{0}(\nu_{1}) , s_{1}(\nu_{2})\xi^{2}+s_{0}(\nu_{2}) \big) \end{align} for $s_{0},s_{1} \in k[\mu,\frac{1}{\mu^{3}-1}]$. The inverse of \labelcref{04-eqn-05} is given by the map \begin{align} \label{04-02-eqn-21} (f_{1}(\nu_{1}),f_{2}(\nu_{2})) \mapsto f_{1}(\mu)\left(\frac{\omega}{\xi-\xi^{2}} + \frac{\xi}{\xi-1}\right) + f_{2}(\mu)\left(\frac{-\omega}{\xi-\xi^{2}} + \frac{-1}{\xi-1}\right) \end{align} where $f_{i}(\nu_{i}) \in k[\nu_{i},\frac{1}{\nu_{i}^{3}-1}]$. (Note that, if we set $\mr{A}_{1}(t) := \frac{t}{\xi-\xi^{2}} + \frac{\xi}{\xi-1}$ and $\mr{A}_{2}(t) := \frac{-t}{\xi-\xi^{2}} + \frac{-1}{\xi-1}$, then $\mr{A}_{i}(\xi^{j})$ is the Kronecker delta function.) \par A computation with \labelcref{04-eqn-03}, \labelcref{04-02-eqn-22}, \labelcref{04-02-eqn-21} shows that the action of $\GL_{2,3}$ on the right hand side of \labelcref{04-eqn-05} is given by \begin{equation} \label{04-02-eqn-20} \begin{aligned} \begin{bmatrix} 1 & 0 \\ 0 & -1 \end{bmatrix} \ast (f_{1}(\nu_{1}),f_{2}(\nu_{2})) &= (f_{2}(\nu_{1}) , f_{1}(\nu_{2})) \\ \begin{bmatrix} 1 & 0 \\ -1 & 1 \end{bmatrix} \ast (f_{1}(\nu_{1}),f_{2}(\nu_{2})) &= (f_{1}(\xi\nu_{1}) , f_{2}(\xi^{2}\nu_{2})) \\ \begin{bmatrix} 0 & -1 \\ 1 & 0 \end{bmatrix} \ast (f_{1}(\nu_{1}),f_{2}(\nu_{2})) &= \textstyle (f_{1}(\frac{\nu_{1}+2}{\nu_{1}-1}),f_{2}(\frac{\nu_{2}+2}{\nu_{2}-1})) \end{aligned} \end{equation} for $f_{i}(\nu_{i}) \in k[\nu_{i},\frac{1}{\nu_{i}^{3}-1}]$. A computation with \labelcref{04-02-eqn-20} (and using that $\on{char} k =2$) shows that the action of $\GL_{2,3}$ on \labelcref{04-eqn-07} is given by \labelcref{04-eqn-08}, where every element is considered up to multiplication by $k^{\times}$. \begin{equation} \label{04-eqn-08} {\small \begin{array}{r|c|c|c|c|c|c} & \nu_{1}-1 & \nu_{1}-\xi & \nu_{1}-\xi^{2} & \nu_{2}-1 & \nu_{2}-\xi & \nu_{2}-\xi^{2} \\[1em] \mathsf{M}_{1} := \begin{bmatrix} 1 & 0 \\ 0 & -1 \end{bmatrix} & \nu_{2}-1 & \nu_{2}-\xi & \nu_{2}-\xi^{2} & \nu_{1}-1 &\nu_{1}-\xi & \nu_{1}-\xi^{2} \\[1em] \mathsf{M}_{2} := \begin{bmatrix} 1 & 0 \\ -1 & 1 \end{bmatrix} & \nu_{1}-\xi^{2} & \nu_{1}-1 & \nu_{1}-\xi & \nu_{2}-\xi & \nu_{2}-\xi^{2} & \nu_{2}-1 \\[1em] \mathsf{i} := \begin{bmatrix} 0 & -1 \\ 1 & 0 \end{bmatrix} & \displaystyle \frac{1}{\nu_{1}-1} & \displaystyle \frac{\nu_{1}-\xi^{2}}{\nu_{1}-1} & \displaystyle \frac{\nu_{1}-\xi}{\nu_{1}-1} & \displaystyle \frac{1}{\nu_{2}-1} & \displaystyle \frac{\nu_{2}-\xi^{2}}{\nu_{2}-1} & \displaystyle \frac{\nu_{2}-\xi}{\nu_{2}-1} \end{array} } \end{equation} \end{pg}

\begin{pg} \label{04-03} We compute $\H^{1}(\GL_{2,3},M)$. (In \Cref{sec-07} we provide \textsc{Magma} code that can be used to verify this computation.) We have a filtration of groups \begin{align} \label{04-eqn-14} \mathrm{Q}_{8} \trianglelefteq \SL_{2,3} \trianglelefteq \GL_{2,3} \end{align} where each is a normal subgroup of the next. Here $\mathrm{Q}_{8}$ denotes the quaternion group \[ \mathrm{Q}_{8} = \{\pm \mathsf{e} , \pm \mathsf{i} , \pm \mathsf{j} , \pm \mathsf{k} \;:\; \mathsf{i}\mathsf{j}\mathsf{k} = \mathsf{i}^{2} = \mathsf{j}^{2} = \mathsf{k}^{2} = -\mathsf{e}\} \] and is identified with the subgroup of $\GL_{2,3}$ as follows: \[ \mathsf{i} = \begin{bmatrix} 0 & -1 \\ 1 & 0 \end{bmatrix} \qquad \mathsf{j} = \begin{bmatrix} -1 & -1 \\ -1 & 1 \end{bmatrix} \qquad \mathsf{k} = \begin{bmatrix} 1 & -1 \\ -1 & -1 \end{bmatrix} \] The quotient $\GL_{2,3}/\SL_{2,3}$ is cyclic of order $2$ and is generated by $\mathsf{M}_{1}$ in \labelcref{04-eqn-08}. The quotient $\SL_{2,3}/\mathrm{Q}_{8}$ is cyclic of order $3$ and is generated by $\mathsf{M}_{2}$ in \labelcref{04-eqn-08}. For $i=1,2$, let $\langle \mathsf{M}_{i} \rangle$ denote the subgroup of $\GL_{2,3}$ generated by $\mathsf{M}_{i}$. We note that $\SL_{2,3}$ is generated by $\mathsf{i}$ and $\mathsf{M}_{2}$. \par Let \[ F : (\Z[\GL_{2,3}]\on{-Mod}) \to (\Z[\SL_{2,3}]\on{-Mod}) \] be the forgetful functor. An inspection of \labelcref{04-eqn-08} implies that $F(M)$ is the direct sum $N_{1} \oplus N_{2}$ where $N_{i}$ is the $\SL_{2,3}$-submodule of $F(M)$ generated by the classes of $\nu_{i}-1,\nu_{i}-\xi,\nu_{i}-\xi^{2}$, and moreover $\mathsf{M}_{1}$ switches the summands $N_{1}$ and $N_{2}$. Under the adjunction \[ \Hom_{\SL_{2,3}}(F(M),N_{1}) \simeq \Hom_{\GL_{2,3}}(M,\on{Ind}_{\SL_{2,3}}^{\GL_{2,3}}(N_{1})) \] the projection map $F(M) \simeq N_{1} \oplus N_{2} \to N_{1}$ onto the first factor corresponds to a morphism \begin{align} \label{04-eqn-21} M \to \on{Ind}_{\SL_{2,3}}^{\GL_{2,3}}(N_{1}) \end{align} of $\GL_{2,3}$-modules. Given $m \in M$, write $m = n_{1} + n_{2}$ for $n_{i} \in N_{i}$; then the image of $m$ under \labelcref{04-eqn-21} is the function $\varphi_{m} \in \Hom_{\Z[\SL_{2,3}]}(\Z[\GL_{2,3}] , N_{1})$ such that $\varphi_{m}([\mathsf{e}]) = n_{1}$ and $\varphi_{m}([\mathsf{M}_{1}]) = \mathsf{M}_{1} \cdot n_{2}$; thus \labelcref{04-eqn-21} is an isomorphism. \par A computation using \labelcref{04-eqn-08} and the identities \begin{align} \label{04-eqn-16} \begin{aligned} \mathsf{k} &= \mathsf{M}_{2}^{-1} \cdot \mathsf{i} \cdot \mathsf{M}_{2} \\ \mathsf{i} &= \mathsf{M}_{2}^{-1} \cdot \mathsf{j} \cdot \mathsf{M}_{2} \\ \mathsf{j} &= \mathsf{M}_{2}^{-1} \cdot \mathsf{k} \cdot \mathsf{M}_{2} \end{aligned} \end{align} shows that the action of an element $\mathsf{g} \in \SL_{2,3}$ on $N_{1}$ is by left multiplication by the matrix $T_{\mathsf{g}}$ as in \labelcref{04-eqn-15}, with elements of $N_{1}$ being viewed as vertical vectors. We note $T_{-\mathsf{e}} = T_{\mathsf{i}}^{2} = T_{\mathsf{j}}^{2} = T_{\mathsf{k}}^{2} = \id_{N_{1}}$, i.e. $-\mathsf{e}$ acts trivially on $N_{1}$.\begin{equation} \label{04-eqn-15} \begin{array}{c|cccc} \mathsf{g} & \mathsf{M}_{2} & \mathsf{i} & \mathsf{j} & \mathsf{k} \\[0.5em] T_{\mathsf{g}} & \begin{bmatrix} 0 & 1 & 0 \\ 0 & 0 & 1 \\ 1 & 0 & 0 \end{bmatrix} & \begin{bmatrix} 1 & 1 & 1 \\ 0 & 0 & 1 \\ 0 & 1 & 0 \end{bmatrix} & \begin{bmatrix} 0 & 1 & 0 \\ 1 & 0 & 0 \\ 1 & 1 & 1 \end{bmatrix} & \begin{bmatrix} 0 & 0 & 1 \\ 1 & 1 & 1 \\ 1 & 0 & 0 \end{bmatrix} \end{array} \end{equation} Since $M$ is an induced module, the restriction map \begin{align} \label{04-eqn-13} \H^{1}(\GL_{2,3},M) \to \H^{1}(\SL_{2,3},N_{1}) \end{align} is an isomorphism so we reduce to computing $\H^{1}(\SL_{2,3},N_{1})$. \par The Hochschild-Serre spectral sequence for the inclusion $\mathrm{Q}_{8} \trianglelefteq \SL_{2,3}$ degenerates on the $\mathrm{E}_{2}$ page since the order of the quotient group $\langle \mathsf{M}_{2} \rangle$ is coprime to the order of $N_{1}$. In particular the restriction map \begin{align} \label{04-eqn-19} \H^{1}(\SL_{2,3} , N_{1}) \to \H^{0}(\langle \mathsf{M}_{2} \rangle , \H^{1}(\mathrm{Q}_{8},N_{1})) \end{align} is an isomorphism. \par Let $\mr{C}^{i}(\mathrm{Q}_{8},N_{1}) := \mr{Fun}((\mathrm{Q}_{8})^{i} , N_{1})$ denote the group of inhomogeneous $i$-cochains. By \Cref{06}, the group $\SL_{2,3}$ has a natural left action on $\mr{C}^{i}(\mathrm{Q}_{8},N_{1})$ (by entrywise conjugation on the source $(\mathrm{Q}_{8})^{i}$ and by its usual action on $N_{1}$) such that the differentials in the inhomogeneous cochain complex \[ \mr{C}^{0}(\mathrm{Q}_{8},N_{1}) \stackrel{d_{0}}{\to} \mr{C}^{1}(\mathrm{Q}_{8},N_{1}) \stackrel{d_{1}}{\to} \mr{C}^{2}(\mathrm{Q}_{8},N_{1}) \to \dotsb \] are $\SL_{2,3}$-linear. Since the order of the subgroup $\langle \mathsf{M}_{2} \rangle$ is coprime to the orders of $\mr{C}^{i}(\mathrm{Q}_{8},N_{1})$, we have that $\H^{0}(\langle \mathsf{M}_{2} \rangle , \H^{1}(\mathrm{Q}_{8},N_{1})) \simeq (\H^{1}(\mathrm{Q}_{8},N_{1}))^{\mathsf{M}_{2}}$ is isomorphic to the middle cohomology of the sequence \[ (\mr{C}^{0}(\mathrm{Q}_{8},N_{1}))^{\mathsf{M}_{2}} \stackrel{(d_{0})^{\mathsf{M}_{2}}}{\to} (\mr{C}^{1}(\mathrm{Q}_{8},N_{1}))^{\mathsf{M}_{2}} \stackrel{(d_{1})^{\mathsf{M}_{2}}}{\to} (\mr{C}^{2}(\mathrm{Q}_{8},N_{1}))^{\mathsf{M}_{2}}  \] i.e. cohomology commutes with taking $\mathsf{M}_{2}$-invariants. \par We now describe $\ker ((d_{1})^{\mathsf{M}_{2}})$ and $\im ((d_{0})^{\mathsf{M}_{2}})$. \par An element $f \in (\mr{C}^{1}(\mathrm{Q}_{8},N_{1}))^{\mathsf{M}_{2}}$ is a function $f : \mathrm{Q}_{8} \to N_{1}$ satisfying \begin{align} \label{04-eqn-17} f(\mathsf{g}) = \mathsf{M}_{2} \cdot f(\mathsf{M}_{2}^{-1} \mathsf{g} \mathsf{M}_{2}) \end{align} for all $\mathsf{g} \in \mathrm{Q}_{8}$. We have that $f \in \ker d_{1}$ if \begin{align} \label{04-eqn-18} f(\mathsf{g}_{1} \cdot \mathsf{g}_{2}) = \mathsf{g}_{1} \cdot f(\mathsf{g}_{2}) + f(\mathsf{g}_{1}) \end{align} for all $\mathsf{g}_{1},\mathsf{g}_{2} \in \mathrm{Q}_{8}$. \par Suppose $f \in \ker ((d_{1})^{\mathsf{M}_{2}}) = (\ker d_{1}) \cap (\mr{C}^{1}(\mathrm{Q}_{8},N_{1}))^{\mathsf{M}_{2}}$; taking $(\mathsf{g}_{1},\mathsf{g}_{2}) = (\mathsf{e},\mathsf{e})$ in \labelcref{04-eqn-18} implies $f(\mathsf{e}) = 0$; taking $\mathsf{g} = -\mathsf{e}$ in \labelcref{04-eqn-17} implies that \[ f(-\mathsf{e}) = (s,s,s) \] for some $s \in \Z/(2)$; taking $(\mathsf{g}_{1},\mathsf{g}_{2}) = (-\mathsf{e},-\mathsf{e})$ in \labelcref{04-eqn-18} and using the fact that $-\mathsf{e}$ acts trivially on $N_{1}$ implies that $2f(-\mathsf{e}) = 0$, which imposes no condition on $s$. We note that \[ \mathsf{g} \cdot f(-\mathsf{e}) = f(-\mathsf{e}) \] for any $\mathsf{g} \in \SL_{2,3}$. \par Setting $\mathsf{g} = \mathsf{i},\mathsf{j},\mathsf{k}$ in \labelcref{04-eqn-17} and using \labelcref{04-eqn-16} gives \begin{align} \label{04-eqn-22} \begin{aligned} f(\mathsf{i}) &= \mathsf{M}_{2} \cdot f(\mathsf{k}) \\ f(\mathsf{j}) &= \mathsf{M}_{2} \cdot f(\mathsf{i}) \\ f(\mathsf{k}) &= \mathsf{M}_{2} \cdot f(\mathsf{j}) \end{aligned} \end{align} respectively; thus we have \begin{align*} f(\mathsf{i}) &= (s_{1},s_{2},s_{3}) \\ f(\mathsf{j}) &= (s_{2},s_{3},s_{1}) \\ f(\mathsf{k}) &= (s_{3},s_{1},s_{2}) \end{align*} for some $s_{1},s_{2},s_{3} \in \Z/(2)$. \par Setting either $\mathsf{g}_{1} = -\mathsf{e}$ or $\mathsf{g}_{2} = -\mathsf{e}$ in \labelcref{04-eqn-18} implies \begin{align} \label{04-eqn-23} f(-\mathsf{g}) &= f(\mathsf{g}) + f(-\mathsf{e}) \end{align} for any $\mathsf{g} \in \mathrm{Q}_{8}$. \par Setting $(\mathsf{g}_{1},\mathsf{g}_{2}) = (\pm \mathsf{i}, \pm \mathsf{j}) , (\pm \mathsf{j}, \pm \mathsf{k}) , (\pm \mathsf{k}, \pm \mathsf{i})$ in \labelcref{04-eqn-18} (where the signs can vary independently of each other) all impose the condition \begin{align} \label{04-eqn-24} s_{2} &= 0 \end{align} for $s,s_{2}$ (check the case $(\mathsf{g}_{1},\mathsf{g}_{2}) = (\mathsf{i}, \mathsf{j})$, then use \labelcref{04-eqn-23} to show that changing the signs don't give new relations, then use \labelcref{04-eqn-22} to show that one can permute using left multiplication by $\mathsf{M}_{2}$). \par Setting $(\mathsf{g}_{1},\mathsf{g}_{2}) = (\pm \mathsf{j}, \pm \mathsf{i}) , (\pm \mathsf{k}, \pm \mathsf{j}) , (\pm \mathsf{i}, \pm \mathsf{k})$ in \labelcref{04-eqn-18} (where the signs can vary independently of each other) all impose the condition \begin{align} \label{04-eqn-25} s &= s_{3} \end{align} for $s_{3}$ (check the case $(\mathsf{g}_{1},\mathsf{g}_{2}) = (\mathsf{j}, \mathsf{i})$, then use \labelcref{04-eqn-23} to show that changing the signs don't give new relations, then use \labelcref{04-eqn-22} to show that one can permute using left multiplication by $\mathsf{M}_{2}$). \par Setting $(\mathsf{g}_{1},\mathsf{g}_{2}) = (\pm \mathsf{g}, \pm\mathsf{g})$ for $\mathsf{g} = \mathsf{i} , \mathsf{j} , \mathsf{k}$ (where the signs can vary independently of each other) all impose the condition \begin{align} \label{04-eqn-26} s = s_{2}+s_{3} \end{align} on $s,s_{2},s_{3}$ (check the case $\mathsf{g} = \mathsf{i}$, then use \labelcref{04-eqn-23} to show that changing the signs don't give new relations, then use \labelcref{04-eqn-22} to show that one can permute using left multiplication by $\mathsf{M}_{2}$), but \labelcref{04-eqn-26} is implied by \labelcref{04-eqn-24} and \labelcref{04-eqn-25}. \par These are the only relations satisfied by the $s,s_{1},s_{2},s_{3}$. Thus we have \[ \ker ((d_{1})^{\mathsf{M}_{2}}) \simeq \Z/(2) \oplus \Z/(2) \] since there are no relations on $s,s_{1} \in \Z/(2)$. \par An element of $(\mr{C}^{0}(\mathrm{Q}_{8},N_{1}))^{\mathsf{M}_{2}}$ corresponds to an element $(t,t,t) \in N_{1}$; since every element of $\SL_{2,3}$ fixes elements of this form (see \labelcref{04-eqn-15}), the image of $(t,t,t)$ under $(d_{0})^{\mathsf{M}_{2}}$ corresponds to the function $f : \mathrm{Q}_{8} \to N_{1}$ sending every element to $(0,0,0)$, in other words \[ \im ((d_{1})^{\mathsf{M}_{2}}) = 0 \] which implies \begin{align} \label{04-eqn-10} \H^{0}(\langle \mathsf{M}_{2} \rangle , \H^{1}(\mathrm{Q}_{8},N_{1})) \simeq \Z/(2) \oplus \Z/(2) \end{align} and so \begin{align} \label{04-eqn-11} \Br \ms{M}_{1,1,k} = \H_{\on{fppf}}^{2}(\ms{M}_{1,1,k} , \G_{m})[2] = \Z/(2) \end{align} by combining \labelcref{04-eqn-10} with \labelcref{04-eqn-19}, \labelcref{04-eqn-13}, \labelcref{04-eqn-12}, and \labelcref{04-eqn-02}. \qed \end{pg}

\begin{remark}[The inhomogeneous cochain complex admits a left $G$-action] \label{06} Let $G$ be a group, let $H \trianglelefteq G$ be a normal subgroup, and let $M$ be a left $G$-module. Set $P_{i} := \Z[H^{i+1}]$; we denote by $[\mathsf{h}_{0},\dotsc,\mathsf{h}_{i}]$ the canonical $\Z$-basis of $P_{i}$. We view $P_{i}$ as a left $H$-module via the diagonal action $\mathsf{h} \cdot [\mathsf{h}_{0},\dotsc,\mathsf{h}_{i}] = [\mathsf{h}\mathsf{h}_{0} , \dotsc , \mathsf{h}\mathsf{h}_{i}]$; then $P_{i}$ is a free left $\Z[H]$-module with basis consisting of elements of the form $[\mathsf{e},\mathsf{h}_{1},\dotsc,\mathsf{h}_{i}]$. Applying the functor $\Hom_{H}(-,M)$ to the bar resolution \[ \dotsb \to P_{2} \to P_{1} \to P_{0} \to \Z \to 0 \] gives the usual homogeneous cochain complex \[ \Hom_{\Z[H]}(P_{0} , M) \stackrel{\delta_{0}}{\to} \Hom_{\Z[H]}(P_{1} , M) \stackrel{\delta_{1}}{\to} \Hom_{\Z[H]}(P_{2} , M) \to \dotsb \] whose cohomology gives $\H^{i}(H,M)$. \par We note that there is a natural left $G$-action on $\Hom_{\Z[H]}(P_{i},M)$ for which the differential $\delta_{i} : \Hom_{\Z[H]}(P_{i},M) \to \Hom_{\Z[H]}(P_{i+1},M)$ is $G$-linear. Namely, the action of $\mathsf{g} \in G$ on $\varphi_{i} \in \Hom_{\Z[H]}(P_{i},M)$ is described by \[ (\mathsf{g}\varphi_{i})([\mathsf{h}_{0},\dotsc,\mathsf{h}_{i}]) := \mathsf{g} \cdot (\varphi_{i}([\mathsf{g}^{-1}\mathsf{h}_{0}\mathsf{g} , \dotsc , \mathsf{g}^{-1}\mathsf{h}_{i}\mathsf{g}])) \] for all $\mathsf{h}_{0},\dotsc,\mathsf{h}_{i} \in H$. Let \[ \mr{C}^{i}(H,M) := \mr{Fun}(H^{i},M) \] denote the abelian group of functions $H^{i} \to M$. Via the usual abelian group isomorphism \[ \Hom_{\Z[H]}(P_{i},M) \simeq \mr{C}^{i}(H,M) \] sending $\varphi_{i} \mapsto \{(\mathsf{h}_{1},\dotsc,\mathsf{h}_{i}) \mapsto \varphi_{i}(\mathsf{e},\mathsf{h}_{1},\mathsf{h}_{1}\mathsf{h}_{2},\dotsc,\mathsf{h}_{1} \dotsb \mathsf{h}_{i})\}$, the abelian group $\mr{C}^{i}(H,M)$ inherits a left action of $G$ described by \begin{align} \label{06-eqn-01} (\mathsf{g}f_{i})(\mathsf{h}_{1},\dotsc,\mathsf{h}_{i}) = \mathsf{g} \cdot (f_{i}(\mathsf{g}^{-1}\mathsf{h}_{1}\mathsf{g} , \dotsc , \mathsf{g}^{-1}\mathsf{h}_{i}\mathsf{g})) \end{align} for $\mathsf{g} \in G$ and $f_{i} \in \mr{C}^{i}(H,M)$. The inhomogeneous cochain complex \[ \mr{C}^{0}(H,M) \stackrel{d_{0}}{\to} \mr{C}^{1}(H,M) \stackrel{d_{1}}{\to} \mr{C}^{2}(H,M) \to \dotsb \] is $G$-linear as well. \par For $f_{0} \in \mr{C}^{0}(H,M)$, we have $(d_{0}f_{0})(\mathsf{h}_{1}) = \mathsf{h}_{1} \cdot f_{0}(\mathsf{e}) - f_{0}(\mathsf{e})$. \par For $f_{1} \in \mr{C}^{1}(H,M)$, we have $(d_{1}f_{1})(\mathsf{h}_{1},\mathsf{h}_{2}) = \mathsf{h}_{1} \cdot f_{1}(\mathsf{h}_{2}) - f_{1}(\mathsf{h}_{1}\mathsf{h}_{2}) + f_{1}(\mathsf{h}_{1})$. \par Let $\Sigma := G/H$ be the quotient; then there is an induced left action of $\Sigma$ on the cohomology $\mathrm{h}^{i}(\mr{C}^{\bullet}(H,M))$. In case $G \to \Sigma$ has a section, in which case $G$ is the semi-direct product $G \simeq H \rtimes \Sigma$, then this $\Sigma$-action coincides with the one obtained by restricting the $G$-action on $\mr{C}^{\bullet}(H,M)$ to $\Sigma$. \end{remark}

\begin{remark} \label{08} The arguments used in \Cref{04-04} and \Cref{04-03} are similar to those of Mathew and Stojanoska \cite[Appendix B]{MATHEW-STOJANOSKA-TPGOTMFVDT}, who show $\H^{1}(\GL_{2,3},(TMF(3)_{0})^{\times}) = \Z/(12)$ where $\GL_{2,3}$ acts on \begin{align} \label{08-eqn-01} \textstyle TMF(3)_{0} = \Z[\frac{1}{3} , \zeta , t , \frac{1}{t} , \frac{1}{1-\zeta t} , \frac{1}{1+\zeta^{2}t}]/(\zeta^{2}+\zeta+1) \end{align} as in \cite[\S4.3]{STOJANOSKA-CDF2PTMF}. \end{remark}

\begin{note}[Explicit description of inhomogeneous 1-cocycles] \label{22} We describe the 1-cocycles $\GL_{2,3} \to M$ obtained via the compositions \labelcref{04-eqn-19} and \labelcref{04-eqn-13}. By our computation in \Cref{04-03}, the 1-cocycles \[ f_{\mr{Q}_{8}} : \mr{Q}_{8} \to N_{1} \] are of the form \begin{align*} \mathsf{e} &\mapsto (0,0,0) & -\mathsf{e} &\mapsto (s,s,s) \\ \mathsf{i} &\mapsto (s_{1},0,s) & -\mathsf{i} &\mapsto (s_{1}+s,s,0) \\ \mathsf{j} &\mapsto (0,s,s_{1}) & -\mathsf{j} &\mapsto (s,0,s_{1}+s) \\ \mathsf{k} &\mapsto (s,s_{1},0) & -\mathsf{k} &\mapsto (0,s_{1}+s,s) \end{align*} for some $s,s_{1} \in \Z/(2)$. Suppose \[ f_{\SL_{2,3}} : \SL_{2,3} \to N_{1} \] is a 1-cocycle such that $f_{\SL_{2,3}}$ is fixed by the action of $\mathsf{M}_{2}$ (see \labelcref{06-eqn-01}) and which satisfies $f_{\SL_{2,3}}(\mathsf{g}) = f_{\mr{Q}_{8}}(\mathsf{g})$ for $\mathsf{g} \in \mr{Q}_{8}$. We have \[ \mathsf{M}_{2} \cdot f_{\SL_{2,3}}(\mathsf{M}_{2}^{-1} \cdot \mathsf{g} \cdot \mathsf{M}_{2}) = f_{\SL_{2,3}}(\mathsf{g}) \] for all $\mathsf{g} \in \SL_{2,3}$; taking $\mathsf{g} = \mathsf{M}_{2}$ gives $\mathsf{M}_{2} \cdot f_{\SL_{2,3}}(\mathsf{M}_{2}) = f_{\SL_{2,3}}(\mathsf{M}_{2})$. Taking $\mathsf{g}_{1} = \mathsf{g}_{2} = \mathsf{M}_{2}$ in the 1-cocycle condition \labelcref{04-eqn-18} then gives $f_{\SL_{2,3}}(\mathsf{M}_{2}) = 0$. Thus we have \begin{align} \label{22-eqn-01} f_{\SL_{2,3}}(\mathsf{g} \cdot \mathsf{M}_{2}) = f_{\SL_{2,3}}(\mathsf{g}) \end{align} for any $\mathsf{g} \in \SL_{2,3}$, again by \labelcref{04-eqn-18}. \par By Shapiro's lemma \labelcref{04-eqn-13}, there is a 1-cocycle \[ f_{\GL_{2,3}} : \GL_{2,3} \to \mr{Ind}_{\SL_{2,3}}^{\GL_{2,3}}(N_{1}) \] such that precomposing with the inclusion $\SL_{2,3} \subset \GL_{2,3}$ and postcomposing with the projection $\mr{Ind}_{\SL_{2,3}}^{\GL_{2,3}}(N_{1}) \to N_{1}$ gives $f_{\SL_{2,3}}$. After altering $f_{\GL_{2,3}}$ by a 1-coboundary, we may assume by \Cref{20151029-05} that $f_{\GL_{2,3}}$ is given by the formula \labelcref{20151029-05-eqn-01}, namely \begin{align} \label{22-eqn-02} f_{\GL_{2,3}}(\mathsf{g} \cdot \mathsf{M}_{1}^{i})([\mathsf{M}_{1}^{j}]) := f_{\SL_{2,3}}(\mathsf{M}_{1}^{j} \cdot \mathsf{g} \cdot \mathsf{M}_{1}^{-j}) \end{align} for any $i,j \in \{0,1\}$ and $\mathsf{g} \in \SL_{2,3}$. Any element $\mathsf{g} \in \GL_{2,3}$ may be expressed in the form \[ \mathsf{h} \cdot \mathsf{M}_{2}^{i_{2}} \cdot \mathsf{M}_{1}^{i_{1}} \] where $i_{1} \in \{0,1\}$ and $i_{2} \in \{0,1,2\}$ and $\mathsf{h} \in \mathrm{Q}_{8}$. We have formulas \begin{align} \label{22-eqn-03} \begin{aligned} \mathsf{M}_{1} \cdot \mathsf{M}_{2}^{-1} \cdot \mathsf{M}_{1} &= \mathsf{M}_{2}^{-1} \\ \mathsf{M}_{1} \cdot \mathsf{i} \cdot \mathsf{M}_{1}^{-1} &= -\mathsf{i} \\ \mathsf{M}_{1} \cdot \mathsf{j} \cdot \mathsf{M}_{1}^{-1} &= -\mathsf{k} \\ \mathsf{M}_{1} \cdot \mathsf{k} \cdot \mathsf{M}_{1}^{-1} &= -\mathsf{j} \end{aligned} \end{align} and so \begin{align*} f_{\GL_{2,3}}(\mathsf{h} \cdot \mathsf{M}_{2}^{i_{2}} \cdot \mathsf{M}_{1}^{i_{1}})([\mathsf{M}_{1}^{j}]) &\stackrel{1}{=} f_{\SL_{2,3}}(\mathsf{M}_{1}^{j} \cdot \mathsf{h} \cdot \mathsf{M}_{2}^{i_{2}} \cdot \mathsf{M}_{1}^{-j}) \\ &= f_{\SL_{2,3}}((\mathsf{M}_{1}^{j} \cdot \mathsf{h} \cdot \mathsf{M}_{1}^{-j}) \cdot (\mathsf{M}_{1}^{j} \cdot \mathsf{M}_{2}^{i_{2}} \cdot \mathsf{M}_{1}^{-j})) \\ &\stackrel{2}{=} f_{\SL_{2,3}}(\mathsf{M}_{1}^{j} \cdot \mathsf{h} \cdot \mathsf{M}_{1}^{-j}) \\ &\stackrel{3}{=} f_{\mr{Q}_{8}}(\mathsf{M}_{1}^{j} \cdot \mathsf{h} \cdot \mathsf{M}_{1}^{-j}) \end{align*} where equality 1 is by \labelcref{22-eqn-02} and equality 2 is by \labelcref{22-eqn-01} and \labelcref{22-eqn-03} and equality 3 is since $\mathsf{M}_{1}^{j} \cdot \mathsf{h} \cdot \mathsf{M}_{1}^{-j} \in \mr{Q}_{8}$ (see \labelcref{22-eqn-03}). This is summarized in \labelcref{22-eqn-04} below. \begin{align} \label{22-eqn-04} \begin{aligned} f_{\GL_{2,3}}(\mathsf{e}) &= ( f_{\mr{Q}_{8}}(\mathsf{e}) , f_{\mr{Q}_{8}}(\mathsf{e}) ) = ((0,0,0),(0,0,0)) \\ f_{\GL_{2,3}}(\mathsf{i}) &= ( f_{\mr{Q}_{8}}(\mathsf{i}) , f_{\mr{Q}_{8}}(-\mathsf{i}) ) = ((s_{1},0,s),(s_{1}+s,s,0)) \\ f_{\GL_{2,3}}(\mathsf{j}) &= ( f_{\mr{Q}_{8}}(\mathsf{j}) , f_{\mr{Q}_{8}}(-\mathsf{k}) ) = ((0,s,s_{1}),(0,s_{1}+s,s)) \\ f_{\GL_{2,3}}(\mathsf{k}) &= ( f_{\mr{Q}_{8}}(\mathsf{k}) , f_{\mr{Q}_{8}}(-\mathsf{j}) ) = ((s,s_{1},0),(s,0,s_{1}+s)) \end{aligned} \end{align} \end{note}

\begin{note}[The Shapiro isomorphism and inhomogeneous 1-cocycles] \label{20151029-05} \footnote{Ehud Meir's MathOverflow post \cite{MEIR-SLITLOGE} was helpful in working out the details of this section.} Let $G$ be a group, let $H \subseteq G$ be a normal subgroup of finite index such that the projection $G \to G/H$ has a section $G/H \to G$ whose image corresponds to a subgroup $\Sigma$ of $G$. Let $N$ be a left $H$-module and let $\mr{Ind}_{H}^{G}N := \Hom_{\Z[H]}(\Z[G],N)$ denote the associated induced left $G$-module. We recall that the left $G$-action on $\mr{Ind}_{H}^{G}N$ sends $\varphi \mapsto \mathsf{g}\varphi$ where $(\mathsf{g}\varphi)(x) = \varphi(x\mathsf{g})$. \par We describe the inverse of the Shapiro isomorphism $\H^{1}(G,\mr{Ind}_{H}^{G}N) \to \H^{1}(H,N)$ in terms of inhomogeneous cochains. Suppose given a function \[ f : H \to N \] which satisfies \[ f(\mathsf{h}_{1}\mathsf{h}_{2}) = \mathsf{h}_{1} \cdot f(\mathsf{h}_{2}) + f(\mathsf{h}_{1}) \] for all $\mathsf{h}_{1},\mathsf{h}_{2} \in H$. We construct a 1-cocycle \[ s : G \to \mr{Ind}_{H}^{G}(N) \] which restricts to $f$, i.e. satisfies $s(\mathsf{h})(1 \cdot [\mathsf{e}]) = f(\mathsf{h})$ for all $\mathsf{h} \in H$. Note that every element of $\mathsf{g} \in G$ may be written uniquely in the form \[ \mathsf{g} = \mathsf{h}\sigma \] for $\mathsf{h} \in H$ and $\sigma \in \Sigma$, hence the collection $\{[\sigma]\}_{\sigma \in \Sigma}$ forms a basis for $\Z[G]$ as a left $\Z[H]$-module. We set \begin{align} \label{20151029-05-eqn-01} s(\mathsf{h}\sigma)([\xi]) := f(\xi\mathsf{h}\xi^{-1}) \end{align} for $\mathsf{h} \in H$ and $\sigma,\xi \in \Sigma$ and extend $\Z[H]$-linearly. Given $\mathsf{g}_{1},\mathsf{g}_{2} \in G$ where $\mathsf{g}_{i} = \mathsf{h}_{i}\sigma_{i}$ with $\mathsf{h}_{i} \in H$ and $\sigma_{i} \in \Sigma$, for any $\xi \in \Sigma$ we have \begin{align*} s(\mathsf{g}_{1}\mathsf{g}_{2})([\xi]) &= s(\mathsf{h}_{1}\sigma_{1}\mathsf{h}_{2}\sigma_{2})([\xi]) \\ &= s(\mathsf{h}_{1}(\sigma_{1}\mathsf{h}_{2}\sigma_{1}^{-1})\sigma_{1}\sigma_{2})([\xi]) \\ &= f(\xi\mathsf{h}_{1}(\sigma_{1}\mathsf{h}_{2}\sigma_{1}^{-1})\xi^{-1}) \end{align*} and \begin{align*} (\mathsf{g}_{1} \cdot s(\mathsf{g}_{2}))([\xi]) &= s(\mathsf{h}_{2}\sigma_{2})([\xi\mathsf{h}_{1}\sigma_{1}]) \\ &= s(\mathsf{h}_{2}\sigma_{2})([(\xi\mathsf{h}_{1}\xi^{-1})\xi\sigma_{1}]) \\ &= (\xi\mathsf{h}_{1}\xi^{-1}) \cdot s(\mathsf{h}_{2}\sigma_{2})([\xi\sigma_{1}]) \\ &= (\xi\mathsf{h}_{1}\xi^{-1}) \cdot f((\xi\sigma_{1})\mathsf{h}_{2}(\xi\sigma_{1})^{-1}) \end{align*} and \begin{align*} s(\mathsf{g}_{1})([\xi]) &= s(\mathsf{h}_{1}\sigma_{1})([\xi]) = f(\xi\mathsf{h}_{1}\xi^{-1}) \end{align*} which implies \[ s(\mathsf{g}_{1}\mathsf{g}_{2}) = \mathsf{g}_{1} \cdot s(\mathsf{g}_{2}) + s(\mathsf{g}_{1}) \] by $\Z[H]$-linearity and since $f$ is a 1-cocycle; hence $s$ is a 1-cocycle. \qed \end{note}

\begin{pg}[{Proof of \Cref{20170322-02}}] Let $k^{\mr{sep}}$ be a fixed separable closure of $k$ and let $\mr{G}_{k} := \mr{Gal}(k^{\mr{sep}}/k) \simeq \widehat{\Z}$ be the absolute Galois group. Set $\ms{M} := \ms{M}_{1,1,k}$ and $\ms{M}^{\mr{sep}} := \ms{M}_{1,1,k^{\mr{sep}}}$. We have $\Br \ms{M} = \Br' \ms{M}$ by \Cref{20170321-05}. The Leray spectral sequence for the map $\ms{M} \to \Spec k$ is of the form \[ \mr{E}_{2}^{p,q} = \H^{p}(\mr{G}_{k} , \H^{q}_{\et}(\ms{M}^{\mr{sep}} , \G_{m})) \implies \H^{p+q}_{\et}(\ms{M} , \G_{m}) \] with differentials $\mr{E}_{2}^{p,q} \to \mr{E}_{2}^{p+2,q-1}$. Here we have $\Gamma(\ms{M}^{\mr{sep}},\G_{m}) = \Gamma(\A_{k^{\mr{sep}}}^{1},\G_{m}) = (k^{\mr{sep}})^{\times}$ since $\ms{M}^{\mr{sep}} \to \A_{k^{\mr{sep}}}^{1}$ is the coarse moduli space map. Since $k$ is a finite field, we have that $\H^{0}_{\et}(\ms{M}^{\mr{sep}},\G_{m})$ is a torsion group. Moreover $\H^{1}_{\et}(\ms{M}^{\mr{sep}} , \G_{m}) \simeq \Pic(\ms{M}^{\mr{sep}}) \simeq \Z/(12)$ is a torsion group by \cite{FULTON-OLSSON-PICARD}. Thus by e.g. \cite[4.3.7]{LEIFU} or \cite[6.1.3]{GILLE-SZAMUELY} we have $\mr{E}_{2}^{p,q} = 0$ for $(p,q) \in \Z_{\ge 2} \times \{0,1\}$. This means there is an exact sequence \begin{align} \label{20170322-02-eqn-01} 0 \to \mr{E}_{2}^{1,1} \to \H^{2}_{\et}(\ms{M} , \G_{m}) \to \mr{E}_{2}^{0,2} \to 0 \end{align} of abelian groups. \par By \cite{FULTON-OLSSON-PICARD}, we have that $\Pic(\ms{M}^{\mr{sep}}) \simeq \Z/(12)$ is generated by the class of the Hodge bundle; since $\mr{G}_{k}$ acts trivially on invariant differentials of elliptic curves $E \to S$ where $S$ is a $k$-scheme, the action of $\mr{G}_{k}$ on $\Pic(\ms{M}^{\mr{sep}})$ is trivial. Hence we have \[ \mr{E}_{2}^{1,1} = \H^{1}(\mr{G}_{k} , \H^{1}_{\et}(\ms{M}^{\mr{sep}} , \G_{m})) \stackrel{1}{=} \Hom_{\mr{cont}}(\mr{G}_{k} , \Pic(\ms{M}^{\mr{sep}})) \stackrel{2}{=} \Z/(12) \] where equality 1 is by \cite[4.3.7]{LEIFU} and equality 2 is since $\mr{G}_{k} \simeq \widehat{\Z}$. We have \[ \mr{E}_{2}^{0,2} = \H^{0}(\mr{G}_{k} , \H^{2}_{\et}(\ms{M}^{\mr{sep}} , \G_{m})) \stackrel{1}{=} (\Z/(2))^{\mr{G}_{k}} \stackrel{2}{=} \Z/(2) \] where equality 1 is by the computation for an algebraically closed field (\Cref{48}) and also the fact that $\H^{2}_{\et}(\ms{M}^{\mr{sep}} , \G_{m})$ is a torsion group (see \cite[Proposition 2.5 (iii)]{ANTIEAU-MEIER-TBGOTMSOEC}) and equality 2 is because any group action on the group of order $2$ is necessarily trivial. Thus \labelcref{20170322-02-eqn-01} reduces to a natural extension \begin{align} \label{20170322-02-eqn-02} 0 \to \Z/(12) \to \Br \ms{M} \to \Z/(2) \to 0 \end{align} and it remains to see whether \labelcref{20170322-02-eqn-02} is split. It suffices to compute the size of $(\Br \ms{M})[2]$, since $(\Br \ms{M})[2]$ has $4$ or $2$ elements depending on whether \labelcref{20170322-02-eqn-02} is split or not, respectively. \par As in \Cref{04-02}, the fppf Kummer sequence \begin{align} \label{20170322-05-eqn-20} 1 \to \mu_{2} \to \G_{m} \stackrel{\times 2}{\to} \G_{m} \to 1 \end{align} gives an exact sequence \begin{align} \label{20170322-05-eqn-02} 1 \to \Z/(2) \stackrel{\partial}{\to} \H_{\on{fppf}}^{2}(\ms{M} , \mu_{2}) \to (\Br \ms{M})[2] \to 1 \end{align} of abelian groups. We compute $\H_{\on{fppf}}^{2}(\ms{M} , \mu_{2})$ using the Leray spectral sequence which is of the form \[ \mr{E}_{2}^{p,q} = \H^{p}(\mr{G}_{k},\H^{q}_{\mr{fppf}}(\ms{M}^{\mr{sep}} , \mu_{2})) \implies \H^{p+q}_{\mr{fppf}}(\ms{M} , \mu_{2}) \] with differentials $\mr{E}_{2}^{p,q} \to \mr{E}_{2}^{p+2,q-1}$. We have \[ \H^{p}_{\mr{fppf}}(\ms{M}^{\mr{sep}},\mu_{2}) = \begin{cases} 0 &\text{if }p = 0 \\ \Z/(2) &\text{if }p = 1 \\ \Z/(2) \oplus \Z/(2) &\text{if }p = 2 \end{cases} \] from the fppf Kummer sequence on $\ms{M}^{\mr{sep}}$, where the $p=0$ case follows since we are in characteristic $2$ and $\Gamma(\ms{M}^{\mr{sep}},\G_{m}) = \Gamma(\A_{k^{\mr{sep}}}^{1},\G_{m}) = (k^{\mr{sep}})^{\times}$, the $p=1$ case is since the multiplication-by-2 map on $\Gamma(\ms{M}^{\mr{sep}},\G_{m}) = (k^{\mr{sep}})^{\times}$ is an isomorphism, and the $p=2$ case is by the computation in the algebraically closed case (combine \labelcref{04-eqn-12}, \labelcref{04-eqn-13}, \labelcref{04-eqn-19}, \labelcref{04-eqn-10}). \par Since $k$ has characteristic 2, the 2-cohomological dimension of $k$ satisfies $\mr{cd}_{2}(k) \le 1$ by e.g. \cite[6.1.9]{GILLE-SZAMUELY}; hence $\mr{E}_{2}^{p,q} = 0$ for $p \ge 2$ and any $q$. Hence there is an exact sequence \begin{align} \label{20170322-02-eqn-04} 0 \to \H^{1}(\mr{G}_{k} , \H^{1}_{\mr{fppf}}(\ms{M}^{\mr{sep}} , \mu_{2})) \to \H^{2}_{\mr{fppf}}(\ms{M} , \mu_{2}) \to \H^{0}(\mr{G}_{k} , \H^{2}_{\mr{fppf}}(\ms{M}^{\mr{sep}} , \mu_{2})) \to 0 \end{align} of abelian groups. As above, the $\mr{G}_{k}$-action on $\H^{1}_{\mr{fppf}}(\ms{M}^{\mr{sep}} , \mu_{2})$ is necessarily trivial so we have an isomorphism $\H^{1}(\mr{G}_{k} , \H^{1}_{\mr{fppf}}(\ms{M}^{\mr{sep}} , \mu_{2})) \simeq \Hom_{\mr{cont}}(\mr{G}_{k} , \Z/(2)) \simeq \Z/(2)$. \par To describe $\H^{0}(\mr{G}_{k} , \H^{2}_{\mr{fppf}}(\ms{M}^{\mr{sep}} , \mu_{2}))$, we describe the $\mr{G}_{k}$-action on $\H^{2}_{\mr{fppf}}(\ms{M}^{\mr{sep}} , \mu_{2})$. Let \[ \xi \in k^{\mr{sep}} \] be a fixed root of $x^{2}+x+1$ (i.e. a primitive 3rd root of unity). \par If $\xi \in k$, then $\mr{G}_{k}$ acts trivially on $\H^{2}_{\mr{fppf}}(\ms{M}^{\mr{sep}} , \mu_{2})$; hence $\H^{0}(\mr{G}_{k} , \H^{2}_{\mr{fppf}}(\ms{M}^{\mr{sep}} , \mu_{2}))$ has $4$ elements, hence $\H^{2}_{\mr{fppf}}(\ms{M} , \mu_{2})$ has $8$ elements by \labelcref{20170322-02-eqn-04}, hence $(\Br \ms{M})[2]$ has $4$ elements by \labelcref{20170322-05-eqn-02}, hence $\Br \ms{M} \simeq \Z/(2) \oplus \Z/(12)$. \par Suppose $\xi \not\in k$. The $k$-algebra map \[ \textstyle k[\mu,\omega,\frac{1}{\mu^{3}-1}]/(\omega^{2}+\omega+1) \to k^{\mr{sep}}[\nu_{1},\frac{1}{\nu_{1}^{3}-1}] \times k^{\mr{sep}}[\nu_{2},\frac{1}{\nu_{2}^{3}-1}] \] sending $\mu \mapsto (\nu_{1},\nu_{2})$ and $\omega \mapsto (\xi,\xi^{2})$ induces an isomorphism \begin{align} \label{20170322-05-eqn-10} \textstyle k[\mu,\omega,\frac{1}{\mu^{3}-1}]/(\omega^{2}+\omega+1) \otimes_{k} k^{\mr{sep}} \to k^{\mr{sep}}[\nu_{1},\frac{1}{\nu_{1}^{3}-1}] \times k^{\mr{sep}}[\nu_{2},\frac{1}{\nu_{2}^{3}-1}] \end{align} of $k^{\mr{sep}}$-algebras. The inverse to \labelcref{20170322-05-eqn-10} sends \[ ( f_{1}(\nu_{1}) , f_{2}(\nu_{2}) ) \mapsto f_{1}(\mu) \left( \omega \otimes \frac{1}{\xi-\xi^{2}} + 1 \otimes \frac{\xi}{\xi-1} \right) + f_{2}(\mu) \left( (-\omega) \otimes \frac{1}{\xi-\xi^{2}} + (-1) \otimes \frac{1}{\xi-1} \right) \] for $f_{i}(\nu_{i}) \in k[\nu_{i} , \frac{1}{\nu_{i}^{3}-1}]$. \par Let \[ \lambda \in \mr{G}_{k} \] be an automorphism of $k^{\mr{sep}}$ such that $\lambda(\xi) = \xi^{2}$. Then the $k$-algebra automorphism of $k^{\mr{sep}}[\nu_{1},\frac{1}{\nu_{1}^{3}-1}] \times k^{\mr{sep}}[\nu_{2},\frac{1}{\nu_{2}^{3}-1}]$ induced by \labelcref{20170322-05-eqn-10} sends $(\nu_{1} , 0) \mapsto (0 , \nu_{2})$ and $(0, \nu_{2}) \mapsto (\nu_{1} , 0)$ and $(\xi , 0) \mapsto (0 , \xi^{2})$ and $(0 , \xi) \mapsto (\xi^{2} , 0)$. We see that the action of $\lambda$ on $M$ (see \labelcref{04-eqn-07}) is given by \labelcref{20170322-02-eqn-03}. \begin{equation} \label{20170322-02-eqn-03} {\small \begin{array}{r|c|c|c|c|c|c} & \nu_{1}-1 & \nu_{1}-\xi & \nu_{1}-\xi^{2} & \nu_{2}-1 & \nu_{2}-\xi & \nu_{2}-\xi^{2} \\ \lambda & \nu_{2}-1 & \nu_{2}-\xi^{2} & \nu_{2}-\xi & \nu_{1}-1 &\nu_{1}-\xi^{2} & \nu_{1}-\xi \end{array} } \end{equation} A computation with \labelcref{20170322-02-eqn-03} and \labelcref{04-eqn-08} shows that \begin{align} \label{20170322-02-eqn-05} \lambda \mathsf{g} \lambda^{-1} \cdot m = \mathsf{g} \cdot m \end{align} for any $m \in M$ and $\mathsf{g} \in \GL_{2,3}$. \par Let $f_{\GL_{2,3}} : \GL_{2,3} \to M$ be an inhomogeneous 1-cocycle as in \Cref{22}. Multiplying the 1-cocycle condition \labelcref{04-eqn-18} on the left by $\lambda$ gives \begin{align*} \lambda \cdot f_{\GL_{2,3}}(\mathsf{g}_{1} \cdot \mathsf{g}_{2}) &= \lambda\mathsf{g}_{1} \cdot f_{\GL_{2,3}}(\mathsf{g}_{2}) + \lambda \cdot f_{\GL_{2,3}}(\mathsf{g}_{1}) \\ &\stackrel{1}{=} \mathsf{g}_{1} \cdot (\lambda \cdot f_{\GL_{2,3}}(\mathsf{g}_{2})) + \lambda \cdot f_{\GL_{2,3}}(\mathsf{g}_{1}) \end{align*} where equality 1 follows from \labelcref{20170322-02-eqn-05}. Hence the function $\lambda \cdot f_{\GL_{2,3}} : \GL_{2,3} \to M$ sending $\mathsf{g} \mapsto \lambda \cdot f_{\GL_{2,3}}(\mathsf{g})$ is a 1-cocycle as well. Using \labelcref{20170322-02-eqn-03} and \labelcref{22-eqn-04}, we have that \begin{align} \label{20170322-02-eqn-06} \begin{aligned} (\lambda \cdot f_{\GL_{2,3}})(\mathsf{e}) &= ((0,0,0),(0,0,0)) \\ (\lambda \cdot f_{\GL_{2,3}})(\mathsf{i}) &= ((s_{1}+s,0,s),(s_{1},s,0)) \\ (\lambda \cdot f_{\GL_{2,3}})(\mathsf{j}) &= ((0,s,s_{1}+s),(0,s_{1},s)) \\ (\lambda \cdot f_{\GL_{2,3}})(\mathsf{k}) &= ((s,s_{1}+s,0),(s,0,s_{1})) \end{aligned} \end{align} and so \begin{align} \label{20170322-02-eqn-08} \begin{aligned} f_{\GL_{2,3}}(\mathsf{e}) - (\lambda \cdot f_{\GL_{2,3}})(\mathsf{e}) &= ((0,0,0),(0,0,0)) \\ f_{\GL_{2,3}}(\mathsf{i}) - (\lambda \cdot f_{\GL_{2,3}})(\mathsf{i}) &= ((s,0,0),(s,0,0)) \\ f_{\GL_{2,3}}(\mathsf{j}) - (\lambda \cdot f_{\GL_{2,3}})(\mathsf{j}) &= ((0,0,s),(0,s,0)) \\ f_{\GL_{2,3}}(\mathsf{k}) - (\lambda \cdot f_{\GL_{2,3}})(\mathsf{k}) &= ((0,s,0),(0,0,s)) \end{aligned} \end{align} for the same $s,s_{1} \in \Z/(2)$ as in \labelcref{22-eqn-04}. \par Suppose $f_{\GL_{2,3}}$ and $\lambda \cdot f_{\GL_{2,3}}$ differ by a 1-coboundary, in other words there exists an element \[ m := ((m_{1}^{1},m_{2}^{1},m_{3}^{1}),(m_{1}^{2},m_{2}^{2},m_{3}^{2})) \in M \] such that \begin{align} \label{20170322-02-eqn-07} f_{\GL_{2,3}}(\mathsf{g}) - (\lambda \cdot f_{\GL_{2,3}})(\mathsf{g}) = \mathsf{g} \cdot m - m \end{align} for all $\mathsf{g} \in \GL_{2,3}$. By \labelcref{20170322-02-eqn-08}, taking $\mathsf{g} = \mathsf{M}_{2}$ in \labelcref{20170322-02-eqn-07} gives $m^{i} := m_{1}^{i} = m_{2}^{i} = m_{3}^{i}$ for $i=1,2$; then taking $\mathsf{g} = \mathsf{M}_{1}$ gives $m^{1} = m^{2}$; then taking $\mathsf{g} = \mathsf{i}$ gives $m = 0$. We see that $f_{\GL_{2,3}}$ and $\lambda \cdot f_{\GL_{2,3}}$ differ by a 1-coboundary if and only if $s = 0$. \par Hence we have that $\H^{0}(\mr{G}_{k} , \H^{2}_{\mr{fppf}}(\ms{M}^{\mr{sep}} , \mu_{2})) \simeq \Z/(2)$, hence $\H^{2}_{\mr{fppf}}(\ms{M} , \mu_{2})$ has $4$ elements by \labelcref{20170322-02-eqn-04}, hence $(\Br \ms{M})[2]$ has $2$ elements by \labelcref{20170322-05-eqn-02}, hence $\Br \ms{M} \simeq \Z/(24)$. \qed \end{pg}

\appendix

\section{The Weierstrass and Hesse presentations of $[\Gamma(3)]$} \label{sec-08}

The purpose of this section is to prove \Cref{24} below, which we could not find proved in the literature. For completeness of exposition, we first recall the definition of a full level $N$ structure on an elliptic curve $E/S$.

\begin{pg}[Full level $N$ structure] \label{20} \cite[Ch. 3]{KATZ-MAZUR-AMOEC} Let $N$ be a positive integer. We define $[\Gamma(N)]$ to be the category of pairs \[ (E/S , \xi) \] where \[ E/S = (f : E \to S , e : S \to E) \] is an elliptic curve and \[ \xi : (\Z/(N))^{2}_{S} \to E \] is a morphism of $S$-group schemes inducing an isomorphism $(\Z/(N))^{2}_{S} \simeq E[N]$. A morphism \[ (E_{1}/S_{1},\xi_{1}) \to (E_{2}/S_{2},\xi_{2}) \] is a pair \[ (\alpha : E_{1} \to E_{2} \;,\; \beta : S_{1} \to S_{2} ) \] of morphisms of schemes such that the diagram \begin{equation} \label{20-eqn-01} \begin{tikzpicture}[>=stealth, baseline=(current bounding box.center)] 
\matrix[matrix of math nodes,row sep=3em, column sep=1.5em, text height=1.5ex, text depth=0.25ex] { 
|[name=11]|  & |[name=12]| E_{1} & |[name=13]|  & |[name=14]| E_{2} \\ 
|[name=21]| (\Z/(N))^{2}_{S_{1}} & |[name=22]|  & |[name=23]| (\Z/(N))^{2}_{S_{2}} & |[name=24]|  \\ 
|[name=31]|  & |[name=32]| S_{1} & |[name=33]|  & |[name=34]| S_{2} \\ 
}; 
\draw[->,font=\scriptsize]
(12) edge node[above=-1pt] {$\alpha$} (14) (32) edge node[below=-1pt] {$\beta$} (34) (21) edge[pos=0.8] node[above=-1pt] {$\id \times \beta$} (23) (21) edge node[above left=-2pt] {$\xi_{1}$} (12) (23) edge node[above left=-2pt] {$\xi_{2}$} (14) (12) edge[pos=0.75] node[right=-1pt] {$f_{1}$} (32) (14) edge node[right=-1pt] {$f_{2}$} (34) (21) edge (32) (23) edge (34); \end{tikzpicture} \end{equation} commutes, where the morphism $\id \times \beta$ is the one induced by the identity on $(\Z/(3))^{2}_{\Z}$ and $\beta$, and such that $\alpha$ induces an isomorphism of $S_{1}$-group schemes $E_{1} \simeq S_{1} \times_{\beta,S_{2}} E_{2}$. \par There is a functor \[ [\Gamma(N)] \to \ms{M}_{1,1,\Z} \] sending $(E/S,\xi) \mapsto E/S$ on objects and $(\alpha,\beta) \mapsto (\alpha,\beta)$ on morphisms. If $E/S$ admits a full level $N$ structure, then $N$ is invertible on $S$ by \cite[2.3.2]{KATZ-MAZUR-AMOEC}, hence the above functor factors through $\ms{M}_{1,1,\Z[\frac{1}{N}]}$. If $N \ge 3$, then for any scheme $S$ the fiber category $[\Gamma(N)](S)$ is equivalent to a set by \cite[2.7.2]{KATZ-MAZUR-AMOEC}, so $[\Gamma(N)]$ is fibered in sets over the category of schemes. \end{pg}

\begin{pg}[The {$\GL_{2}(\Z/(N))$}-action on {$[\Gamma(N)]$}] \label{23} Fix a scheme $S$. For any element \[ \sigma = \begin{bmatrix} \sigma_{11} & \sigma_{12} \\ \sigma_{21} & \sigma_{22} \end{bmatrix} \] in $\GL_{2}(\Z/(N))$, let \[ \varphi_{\sigma} : (\Z/(N))^{2}_{S} \to (\Z/(N))^{2}_{S} \] be the $S$-group scheme automorphism of $(\Z/(N))^{2}_{S}$ corresponding to the abelian group homomorphism $(\Z/(N))^{2} \to (\Z/(N))^{2}$ defined by \[ \begin{bmatrix} x_{1} \\ x_{2} \end{bmatrix} \mapsto \begin{bmatrix} \sigma_{11} & \sigma_{12} \\ \sigma_{21} & \sigma_{22} \end{bmatrix} \begin{bmatrix} x_{1} \\ x_{2} \end{bmatrix} = \begin{bmatrix} \sigma_{11}x_{1} + \sigma_{12}x_{2} \\ \sigma_{21}x_{1} + \sigma_{22}x_{2} \end{bmatrix} \] for $x_{1},x_{2} \in \Z/(N)$, i.e. acting by multiplication on the left on $(\Z/(N))^{2}$ viewed as vertical vectors. We have \[ \varphi_{\sigma_{1}}\varphi_{\sigma_{2}} = \varphi_{\sigma_{1}\sigma_{2}} \] for $\sigma_{1},\sigma_{2} \in \GL_{2}(\Z/(N))$. \par Fix an object $(E/S,\xi) \in [\Gamma(N)](E/S)$; then $(E/S,\xi \circ \varphi_{\sigma})$ is another object of $[\Gamma(N)](E/S)$, i.e. corresponds to another full level $N$ structure on $E/S$. This implies that there is a natural action of $\GL_{2}(\Z/(N))$ on each fiber category $[\Gamma(N)](E/S)$; the action is a right action since it is defined by precomposition. \end{pg}

\begin{theorem} \label{21} \cite[4.7.2]{KATZ-MAZUR-AMOEC} If $N \ge 3$, the category $[\Gamma(N)]$ is representable by a smooth affine curve $Y(N)$ over $\Z[\frac{1}{N}]$. \end{theorem}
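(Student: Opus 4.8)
The plan is to realize $[\Gamma(N)]$ first as an algebraic space that is finite \'etale over $\ms{M}_{1,1,\Z[\frac{1}{N}]}$, and then to upgrade this to an affine scheme by exploiting the coarse space $\A_{\Z[\frac{1}{N}]}^{1}$ together with the compactification at the cusp. Concretely I would proceed in four steps: (i) rigidity, so that $[\Gamma(N)]$ is an \'etale sheaf fibered in sets; (ii) identifying the forgetful map to $\ms{M}_{1,1,\Z[\frac{1}{N}]}$ as representable, finite, and \'etale, whence $[\Gamma(N)]$ is an algebraic space $Y(N)$; (iii) proving $Y(N)$ is representable by an \emph{affine} scheme; and (iv) reading off smoothness and relative dimension $1$ from \'etaleness over $\ms{M}_{1,1}$.

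For step (i) the content is that, for $N \ge 3$, an automorphism $\alpha$ of an elliptic curve $E/S$ inducing the identity on $E[N]$ must equal $\id_{E}$. Since the locus where $\alpha = \id_{E}$ is closed and the automorphism scheme of $E/S$ is unramified, this may be checked on geometric fibers, where it follows from the faithfulness of the action of the finite group $\Aut(E)$ (of order dividing $24$) on $E[N] = \Aut$-module, i.e. from the injection $\Aut(E) \hookrightarrow \GL_{2}(\Z/(N))$ valid once $N \ge 3$; this is exactly \cite[2.7.2]{KATZ-MAZUR-AMOEC} recorded above, and with \'etale descent it shows $[\Gamma(N)]$ is a sheaf fibered in sets. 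For step (ii), the forgetful morphism $\rho : [\Gamma(N)] \to \ms{M}_{1,1,\Z[\frac{1}{N}]}$ has fiber over $E/S$ the functor $\underline{\on{Isom}}_{S}((\Z/(N))_{S}^{2}, E[N])$; as $N$ is invertible on $S$, the group scheme $E[N]$ is finite \'etale of order $N^{2}$ and \'etale-locally constant, so this Isom functor is representable by a $\GL_{2}(\Z/(N))$-torsor, hence finite \'etale over $S$. Thus $\rho$ is representable, finite, and \'etale, and since $\ms{M}_{1,1,\Z[\frac{1}{N}]}$ is Deligne-Mumford and $[\Gamma(N)]$ is a sheaf, $[\Gamma(N)]$ is an algebraic space $Y(N)$, finite \'etale over the stack.

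Step (iii) is the crux. Composing with the coarse space morphism $\pi : \ms{M}_{1,1,\Z[\frac{1}{N}]} \to \A_{\Z[\frac{1}{N}]}^{1}$ gives $Y(N) \to \A_{\Z[\frac{1}{N}]}^{1}$, which is separated and quasi-finite; by Zariski's Main Theorem for algebraic spaces (Knutson), a separated quasi-finite morphism from an algebraic space to a scheme is representable by schemes, so $Y(N)$ is a scheme. For affineness I would pass to the standard compactification $\overline{Y(N)} \to \mathbb{P}_{\Z[\frac{1}{N}]}^{1}$, a finite morphism whose restriction away from the cusps over $j = \infty$ recovers $Y(N) \to \A^{1}$; being proper and quasi-finite this restriction is finite, and a scheme finite over the affine scheme $\A^{1}$ is affine. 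Equivalently, one may exhibit $Y(N)$ directly as a closed subscheme of affine space via a universal rigidified Weierstrass model, which is the explicit route pursued in \Cref{sec-08}.

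Finally, for step (iv): $\ms{M}_{1,1,\Z}$ is smooth of relative dimension $1$ over $\Z$ \cite[13.1.2]{OLSSON}, and base-changing to $\Z[\frac{1}{N}]$ and composing with the \'etale morphism $\rho$ preserves smoothness and relative dimension, so $Y(N)$ is a smooth affine curve over $\Z[\frac{1}{N}]$. I expect the main obstacle to be step (iii): \'etaleness over the stack yields only an algebraic space a priori, and establishing that $Y(N)$ is not merely a scheme but affine is where the geometry of the compactification at the cusp (or the explicit equations) does the real work.
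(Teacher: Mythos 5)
Your steps (i), (ii), and (iv) are sound, as is the use of Zariski's Main Theorem (Knutson) to see that $Y(N)$ is a scheme. The genuine gap is exactly where you yourself locate ``the real work'': affineness. You invoke ``the standard compactification $\overline{Y(N)} \to \mathbb{P}^{1}_{\Z[\frac{1}{N}]}$'' as a finite morphism whose boundary sits entirely over $j = \infty$, but that statement \emph{is} the hard content: it is equivalent to properness of $Y(N) \to \A^{1}_{\Z[\frac{1}{N}]}$, which is what you are trying to establish. Normalizing $\mathbb{P}^{1}$ in $Y(N)$ does produce, via ZMT, an open immersion $Y(N) \hookrightarrow \overline{Y(N)}$ with $\overline{Y(N)}$ finite over $\mathbb{P}^{1}$, but gives no control over where the boundary points lie; and the compactified modular curves in the literature (Deligne--Rapoport, Katz--Mazur Ch.~8) are constructed \emph{after} the representability theorem you are proving, so quoting them here is circular. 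Your fallback --- ``exhibit $Y(N)$ directly as a closed subscheme of affine space via a universal rigidified Weierstrass model'' --- is only carried out for $N=3$ (that is what \Cref{sec-08} and \cite[(2.2.11)]{KATZ-MAZUR-AMOEC} do); for general $N$ no such explicit model is available.

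There are two honest ways to close the gap. (a) Use that the coarse moduli map $\pi : \ms{M}_{1,1,\Z[\frac{1}{N}]} \to \A^{1}_{\Z[\frac{1}{N}]}$ is proper (Keel--Mori); then $Y(N) \to \A^{1}_{\Z[\frac{1}{N}]}$ is proper, quasi-finite and representable, hence finite, hence $Y(N)$ is affine --- note this makes both your ZMT step and the compactification unnecessary. (b) Check the valuative criterion directly: for a DVR $R$ with $N$ invertible and fraction field $K$, an elliptic curve $E/K$ with $j(E) \in R$ and constant $N$-torsion ($N \ge 3$) has good reduction over $R$, because the finite inertia group attached to potential good reduction acts faithfully on the $N$-torsion of the reduction (Serre's lemma again) and trivially by hypothesis; the level structure then extends uniquely. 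Finally, a point of comparison: the paper does not prove \Cref{21} at all --- it quotes \cite[4.7.2]{KATZ-MAZUR-AMOEC} --- and Katz--Mazur's own proof is of a completely different character from yours: it avoids stacks and algebraic spaces, and instead combines explicit Weierstrass rigidification in low level (the $Y(3)$ presentation recalled in \Cref{sec-08}) with their general machinery for rigid, relatively representable, affine moduli problems, realizing $Y(N)$ as a quotient of an auxiliary simultaneous moduli scheme by a free action of a finite group. So your outline, once repaired as in (a) or (b), is a legitimately different, stack-theoretic proof rather than a reconstruction of the cited one.
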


We are primarily interested in the case $N=3$. The $3$-torsion points of an elliptic curve correspond to its inflection points (also ``flex points''). In \cite[(2.2.11)]{KATZ-MAZUR-AMOEC} it is shown that $Y(3) \simeq \Spec A_{\on{W}}$ where \[ A_{\on{W}} := \textstyle \Z[\frac{1}{3},B,C,\frac{1}{C} , \frac{1}{a_{3}} , \frac{1}{a_{1}^{3}-27a_{3}}]/(B^{3}-(B+C)^{3}) \] and the universal elliptic curve over $A_{\on{W}}$ with full level 3 structure is the pair \begin{align} \label{20150206-30-eqn-15} \begin{cases} E_{\on{W}} := \Proj A_{\on{W}}[X,Y,Z]/(Y^{2}Z + a_{1}XYZ + a_{3}YZ^{2} = X^{3}) \\ [0:0:1] , [C:B+C:1] \end{cases} \end{align} where \begin{align} \label{20150206-30-eqn-03} a_{1} &= 3C-1 \\ \label{20150206-30-eqn-04} a_{3} &= -3C^{2}-B-3BC \;. \end{align} 
The formulas \labelcref{20150206-30-eqn-03} and \labelcref{20150206-30-eqn-04} are obtained by imposing the condition that the line $Y = X + BZ$ is a flex tangent to $E_{\on{W}}$ at $[C : B+C : 1]$. The ring $A_{\on{W}}$ is isomorphic to $TMF(3)_{0}$ \labelcref{08-eqn-01}, with mutually inverse ring isomorphisms $TMF(3)_{0} \to A_{\on{W}}$ and $A_{\on{W}} \to TMF(3)_{0}$ given by $(\zeta,t) \mapsto (\frac{B+C}{C} , \frac{1}{3C})$ and $(B,C) \mapsto (\frac{1}{3(\zeta-1)t} , \frac{1}{3t})$ respectively.

In this paper, however, we use the ``Hesse presentation'' of $Y(3)$ as in \cite[5.1]{FULTON-OLSSON-PICARD}. The following is claimed without proof in the Introduction to \cite{DELIGNE-RAPOPORT-LSDMDCE} and \cite[5.2.30]{HARDER-LOAG-I}.

\begin{proposition} \label{24} There is an isomorphism $Y(3) \simeq \Spec A_{\on{H}}$ where \[ A_{\on{H}} := \textstyle \Z[\frac{1}{3},\mu,\omega,\frac{1}{\mu^{3}-1}]/(\omega^{2}+\omega+1) \] and the universal elliptic curve over $A_{\on{H}}$ with full level 3 structure is the pair \begin{align} \label{20150206-30-eqn-16} \begin{cases} E_{\on{H}} := \Proj A_{\on{H}}[X,Y,Z]/(X^{3}+Y^{3}+Z^{3} = 3\mu XYZ) \\ [-1:0:1] , [1:-\omega:0] \end{cases} \end{align} with identity section $[1:-1:0]$. \end{proposition}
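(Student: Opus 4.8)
The plan is to take the Weierstrass presentation $Y(3)\simeq\Spec A_{\on{W}}$ recalled above (from \cite[(2.2.11)]{KATZ-MAZUR-AMOEC}; see \Cref{21}) as known, and to compare it with the Hesse datum by an explicit projective change of coordinates. Giving the pair $(E_{\on{H}},\xi_{\on{H}})$ is the same as giving a morphism $\Spec A_{\on{H}}\to Y(3)$, and the goal is to show this morphism is an isomorphism by transforming the universal Weierstrass curve with its level structure into Hesse normal form and reading off mutually inverse ring maps $A_{\on{W}}\rightleftarrows A_{\on{H}}$.

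First I would verify that $(E_{\on{H}},[1:-1:0])$ is an elliptic curve over $A_{\on{H}}$. The cubic $X^{3}+Y^{3}+Z^{3}=3\mu XYZ$ is smooth over a base on which $3$ is a unit exactly when $\mu^{3}-1$ is a unit: the singular members of the Hesse pencil are precisely those with $\mu^{3}=1$ (together with the degenerate fibre at $\mu=\infty$), as one checks by locating the common zeros of the three partial derivatives. Both conditions are built into $A_{\on{H}}=\Z[\frac{1}{3},\mu,\omega,\frac{1}{\mu^{3}-1}]/(\omega^{2}+\omega+1)$, so $E_{\on{H}}$ is a smooth proper curve with geometrically connected genus-$1$ fibres, and $[1:-1:0]$ is a section lying on it.

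Next I would check that the marked points define a full level $3$ structure in the sense of \Cref{20}. Since the chosen origin $[1:-1:0]$ is an inflection point, the subgroup $E_{\on{H}}[3]$ is exactly the set of the nine flexes, namely $[0:-\omega^{a}:1]$, $[-\omega^{a}:0:1]$, and $[1:-\omega^{a}:0]$ for $a\in\{0,1,2\}$; the presence of $\omega$ is what splits $E_{\on{H}}[3]$ as the constant group scheme $(\Z/(3))^{2}$. A group-law computation shows that $[-1:0:1]$ (on the line $Y=0$) and $[1:-\omega:0]$ (on the line $Z=0$) each have order $3$ and are independent, so $e_{1}\mapsto[-1:0:1]$, $e_{2}\mapsto[1:-\omega:0]$ defines an isomorphism $\xi_{\on{H}}\colon(\Z/(3))^{2}_{A_{\on{H}}}\to E_{\on{H}}[3]$. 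By \Cref{21} this yields the classifying morphism $\Phi\colon\Spec A_{\on{H}}\to Y(3)$.

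The remaining and essential step is to invert $\Phi$ by bringing the universal Weierstrass curve into Hesse form. Here the origin $[0:1:0]$ and the two further flexes $[0:0:1]$ and $[0:-a_{3}:1]$ of $E_{\on{W}}$ are collinear on the line $X=0$, so I would write down the projective linear transformation carrying $X=0$ to the Hesse line $Z=0$, sending these three flexes to $[1:-1:0]$, $[1:-\omega:0]$, $[1:-\omega^{2}:0]$, and normalizing the coefficient of $XYZ$; this expresses $\mu$ and $\omega$ as explicit rational functions of $B,C$ (equivalently of $a_{1},a_{3}$). One then checks that these satisfy $\omega^{2}+\omega+1=0$ with $\mu^{3}-1$ a unit, that their denominators lie among the inverted elements $C$, $a_{3}$, $a_{1}^{3}-27a_{3}$ of $A_{\on{W}}$, and conversely that $B,C$ are recovered from $\mu,\omega$ inside $A_{\on{H}}$, so that the two ring maps are mutually inverse. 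The hard part will be the final compatibility: one must track where the marked $3$-torsion points $[0:0:1]$ and $[C:B+C:1]$ go under the transformation and confirm that they land on the standard Hesse basis $[-1:0:1]$, $[1:-\omega:0]$ (possibly after composing with a fixed element of $\GL_{2,3}$), so that the full level $3$ structures, and not merely the underlying curves, correspond. Once this matching of level structures is verified, $\Phi$ and its inverse identify the universal objects, giving $Y(3)\simeq\Spec A_{\on{H}}$.
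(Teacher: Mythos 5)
Your overall route is the same as the paper's: take the Katz--Mazur presentation $Y(3) \simeq \Spec A_{\on{W}}$ (\Cref{21}) as input, bring the universal Weierstrass curve into Hesse form by an explicit projective change of coordinates, read off mutually inverse ring maps $A_{\on{W}} \rightleftarrows A_{\on{H}}$, and repair the mismatch of marked sections at the end. Your steps on smoothness of $E_{\on{H}}$, on the nine flexes giving the level structure (this is \Cref{07} in the paper), and your closing remark that the level structures need only match ``possibly after composing with a fixed element of $\GL_{2,3}$'' (the paper uses an automorphism of the pair $(A_{\on{H}},E_{\on{H}})$ together with the simply transitive $\GL_{2,3}$-action on ordered bases of the $3$-torsion) all correspond to what is actually done.

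However, there is a genuine gap at the center of your third step, and it is exactly the point that makes the proposition require proof at all (it is asserted without proof in Deligne--Rapoport and Harder). First, your prescription does not determine a transformation: the subgroup of $\PGL_{3}$ fixing three collinear points is $3$-dimensional, so ``send the three flexes on $X=0$ to $[1:-1:0],[1:-\omega:0],[1:-\omega^{2}:0]$ and normalize the coefficient of $XYZ$'' leaves a positive-dimensional family of maps, almost none of which carry $E_{\on{W}}$ into the Hesse pencil. Second, and more seriously, the prescription hides the real obstruction. Carrying out the reduction (as in Artebani--Dolgachev, which the paper follows), after moving the three collinear flexes one arrives at an equation of the form $X^{3}+Y^{3}+\beta Z^{3} = -3XYZ$ with $\beta = (27a_{3}-a_{1}^{3})/a_{1}^{3}$, see \labelcref{20150206-30-eqn-07}; to reach Hesse normal form one must rescale $Z$ by a cube root of $-\beta$, i.e.\ one needs $a_{1}^{3}-27a_{3}$ to be a perfect cube \emph{in $A_{\on{W}}$}. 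Over a field one can always pass to a cubic extension (this is what Smart's finite-field formulas do), but here the cube root must lie in $A_{\on{W}}$ itself, otherwise $\mu$ is only defined over a degree-$3$ extension and you get no ring map $A_{\on{H}} \to A_{\on{W}}$; so your claim that the procedure ``expresses $\mu$ and $\omega$ as explicit rational functions of $B,C$'' is precisely what is at stake, not a routine check of denominators. The paper's proof turns on the identity $a_{1}^{3}-27a_{3} = (3C+9B-1)^{3}$ in $A_{\on{W}}$ \labelcref{20150206-30-eqn-01}, a consequence of the defining relation $B^{3}=(B+C)^{3}$; conceptually, a cubic $x^{3}+y^{3}+\beta = 3xy$ admitting a rational flex tangent line has $\beta$ a cube (\Cref{20150206-28}). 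Your argument needs this identity, or some substitute rationality argument (for instance, pinning down the transformation uniquely over $A_{\on{W}}$ by matching four of the nine $A_{\on{W}}$-rational flexes in general position and then proving the image curve lies in the Hesse pencil); as written, the existence of your transformation over $A_{\on{W}}$ is unproved.
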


The explicit $\Z[\frac{1}{3}]$-algebra isomorphisms $A_{\on{H}} \to A_{\on{W}}$ and $A_{\on{W}} \to A_{\on{H}}$ are given in \labelcref{20150206-30-eqn-13} and \labelcref{20150206-30-eqn-14} respectively.

\begin{pg} \label{07} By \cite[\S4]{SMART-HESSIAN}, the group law of an elliptic curve $E = \Proj A[X,Y,Z]/(X^{3}+Y^{3}+Z^{3} = 3\mu XYZ)$ in Hessian form over a ring $A$ is as follows. If $P = [x:y:z]$, then $2P = [x':y':z']$ where \begin{align*} x' &= y(z^{3}-x^{3}) \\ y' &= x(y^{3}-z^{3}) \\ z' &= z(x^{3}-y^{3}) \end{align*} and if $P_{i} = [x_{i}:y_{i}:z_{i}]$ are points of $E_{\on{H}}$ for $i=1,2,3$ satisfying $P_{1} + P_{2} = P_{3}$, then \begin{align*} x_{3} &= x_{2}y_{1}^{2}z_{2} - x_{1}y_{2}^{2}z_{1} \\ y_{3} &= x_{1}^{2}y_{2}z_{2} - x_{2}^{2}y_{1}z_{1} \\ z_{3} &= x_{2}y_{2}z_{1}^{2} - x_{1}y_{1}z_{2}^{2} \end{align*} which only makes sense if $P_{1} \ne P_{2}$. \par Using the above formulas, we may check that the full level 3 structure $\xi_{\on{H}} : (\Z/(3))^{2}_{A_{\on{H}}} \to E_{\on{H}}$ is given by the table \labelcref{20150206-09-eqn-01}. \begin{equation} \label{20150206-09-eqn-01} \xi_{\on{H}} \left( \begin{bmatrix} (0,0) & (1,0) & (2,0) \\[0.5em] (0,1) & (1,1) & (2,1) \\[0.5em] (0,2) & (1,2) & (2,2) \end{bmatrix} \right) = \begin{bmatrix} [1:-1:0] & [-1:0:1] & [0:1:-1] \\[0.5em] [1:-\omega:0] & [-\omega:0:1] & [0:1:-\omega] \\[0.5em] [1:-\omega^{2}:0] & [-\omega^{2}:0:1] & [0:1:-\omega^{2}] \end{bmatrix} \end{equation} The Hesse presentation \labelcref{20150206-30-eqn-16} is sometimes easier to work with than the Weierstrass presentation \labelcref{20150206-30-eqn-15} since the equation of the universal elliptic curve is symmetric in $X,Y,Z$, which means that there is also considerable symmetry in the 3-torsion points \labelcref{20150206-09-eqn-01}. \end{pg}

\begin{pg} \label{25} We describe the $\GL_{2}(\Z/(3))$-action on $E_{\on{H}}/A_{\on{H}}$. Set $S_{\mr{H}} := \Spec A_{\mr{H}}$. The functor $[\Gamma(3)]$ being representable by $S_{\mr{H}}$ means explicitly that for any $\Z[\frac{1}{3}]$-scheme $T$ and object $(E/T,\xi) \in ([\Gamma(3)])(T)$, there exists a unique pair $(\alpha,\beta)$ of morphisms of schemes $\alpha : E \to E_{\mr{H}}$ and $\beta : T \to S_{\mr{H}}$ such that the diagram \begin{center}\begin{tikzpicture}[>=stealth] 
\matrix[matrix of math nodes,row sep=3em, column sep=1.5em, text height=1.5ex, text depth=0.25ex] { 
|[name=11]|  & |[name=12]| E & |[name=13]|  & |[name=14]| E_{\mr{H}} \\ 
|[name=21]| (\Z/(3))^{2}_{T} & |[name=22]|  & |[name=23]| (\Z/(3))^{2}_{S_{\mr{H}}} & |[name=24]|  \\ 
|[name=31]|  & |[name=32]| T & |[name=33]|  & |[name=34]| S_{\mr{H}} \\ 
}; 
\draw[->,font=\scriptsize]
(12) edge node[above=-1pt] {$\alpha$} (14) (32) edge node[below=-1pt] {$\beta$} (34) (21) edge[pos=0.8] node[above=-1pt] {$\id \times \beta$} (23) (21) edge node[above left=-2pt] {$\xi$} (12) (23) edge node[above left=-2pt] {$\xi_{\mr{H}}$} (14) (12) edge[pos=0.75] node[right=-1pt] {$f_{T}$} (32) (14) edge node[right=-1pt] {$f_{S_{\mr{H}}}$} (34) (21) edge (32) (23) edge (34); \end{tikzpicture} \end{center} commutes and induces an isomorphism of $T$-group schemes $E \simeq T \times_{\beta,S_{\mr{H}}} E_{\mr{H}}$ as in \labelcref{20-eqn-01}. \par As in \Cref{23}, for every $\sigma \in \GL_{2}(\Z/(3))$, let $\varphi_{\sigma}$ be the $S_{\mr{H}}$-automorphism of $(\Z/(3))^{2}_{S_{\mr{H}}}$ induced by $\sigma$; then precomposition $\xi_{\mr{H}} \varphi_{\sigma}$ defines another full level 3 structure on $E_{\mr{H}}/S_{\mr{H}}$. Taking $T = S_{\mr{H}}$ and $\xi = \xi_{\mr{H}}\varphi_{\sigma}$ above, there is a unique pair $(\alpha_{\sigma},\beta_{\sigma})$ of morphisms of schemes $\alpha_{\sigma} : E_{\mr{H}} \to E_{\mr{H}}$ and $\beta_{\sigma} : S_{\mr{H}} \to S_{\mr{H}}$ such that the diagram \begin{center}\begin{tikzpicture}[>=stealth] 
\matrix[matrix of math nodes,row sep=3em, column sep=1.5em, text height=1.5ex, text depth=0.25ex] { 
|[name=11]|  & |[name=12]| E_{\mr{H}} & |[name=13]|  & |[name=14]| E_{\mr{H}} \\ 
|[name=21]| (\Z/(3))^{2}_{S_{\mr{H}}} & |[name=22]|  & |[name=23]| (\Z/(3))^{2}_{S_{\mr{H}}} & |[name=24]|  \\ 
|[name=31]|  & |[name=32]| S_{\mr{H}} & |[name=33]|  & |[name=34]| S_{\mr{H}} \\ 
}; 
\draw[->,font=\scriptsize]
(12) edge node[above=-1pt] {$\alpha_{\sigma}$} (14) (32) edge node[below=-1pt] {$\beta_{\sigma}$} (34) (21) edge[pos=0.8] node[above=-1pt] {$\id \times \beta_{\sigma}$} (23) (21) edge node[above left=-2pt] {$\xi_{\mr{H}} \varphi_{\sigma}$} (12) (23) edge node[above left=-2pt] {$\xi_{\mr{H}}$} (14) (12) edge[pos=0.75] node[right=-1pt] {$f_{S_{\mr{H}}}$} (32) (14) edge node[right=-1pt] {$f_{S_{\mr{H}}}$} (34) (21) edge (32) (23) edge (34); \end{tikzpicture} \end{center} commutes and induces an isomorphism of $S_{\mr{H}}$-group schemes $E_{\mr{H}} \simeq S_{\mr{H}} \times_{\beta_{\sigma},S_{\mr{H}}} E_{\mr{H}}$. Given two elements $\sigma_{1},\sigma_{2} \in \GL_{2}(\Z/(3))$, we have a commutative diagram \begin{center}\begin{tikzpicture}[>=stealth] 
\matrix[matrix of math nodes,row sep=3em, column sep=1.5em, text height=1.5ex, text depth=0.25ex] { 
|[name=11]|  & |[name=12]| E_{\mr{H}} & |[name=13]|  & |[name=14]| E_{\mr{H}} & |[name=15]|  & |[name=16]| E_{\mr{H}} \\ 
|[name=21]| (\Z/(3))^{2}_{S_{\mr{H}}} & |[name=22]|  & |[name=23]| (\Z/(3))^{2}_{S_{\mr{H}}} & |[name=24]|  & |[name=25]| (\Z/(3))^{2}_{S_{\mr{H}}} & |[name=26]|  \\ 
|[name=31]|  & |[name=32]| S_{\mr{H}} & |[name=33]|  & |[name=34]| S_{\mr{H}} & |[name=35]|  & |[name=36]| S_{\mr{H}} \\ 
}; 
\draw[->,font=\scriptsize]
(12) edge node[above=-1pt] {$\alpha_{\sigma_{1}}$} (14) (14) edge node[above=-1pt] {$\alpha_{\sigma_{2}}$} (16) (32) edge node[below=-1pt] {$\beta_{\sigma_{1}}$} (34) (34) edge node[below=-1pt] {$\beta_{\sigma_{2}}$} (36) (21) edge (23) (23) edge (25) (21) edge node[above left=-2pt] {$\xi_{\mr{H}} \varphi_{\sigma_{1}} \varphi_{\sigma_{2}}$} (12) (23) edge node[above left=-2pt] {$\xi_{\mr{H}} \varphi_{\sigma_{2}}$} (14) (25) edge node[above left=-2pt] {$\xi_{\mr{H}}$} (16) (12) edge[pos=0.75] node[right=-1pt] {$f_{S_{\mr{H}}}$} (32) (14) edge[pos=0.75] node[right=-1pt] {$f_{S_{\mr{H}}}$} (34) (16) edge node[right=-1pt] {$f_{S_{\mr{H}}}$} (36) (21) edge (32) (23) edge (34) (25) edge (36); \end{tikzpicture} \end{center} which implies \[ \beta_{\sigma_{2}}\beta_{\sigma_{1}} = \beta_{\sigma_{1}\sigma_{2}} \] since $\varphi_{\sigma_{1}\sigma_{2}} = \varphi_{\sigma_{1}} \varphi_{\sigma_{2}}$ (see \Cref{23}). Thus the assignment \begin{align} \label{25-eqn-01} \sigma \mapsto \beta_{\sigma} \end{align} defines a right action of $\GL_{2}(\Z/(3))$ on the scheme $S_{\mr{H}}$. \par In terms of the generators \[ \mathsf{M}_{1} = \begin{bmatrix} 1 & 0 \\ 0 & -1 \end{bmatrix} \;,\; \mathsf{M}_{2} = \begin{bmatrix} 1 & 0 \\ -1 & 1 \end{bmatrix} \;,\; \mathsf{i} = \begin{bmatrix} 0 & -1 \\ 1 & 0 \end{bmatrix} \] of $\GL_{2}(\Z/(3))$, the action of $\GL_{2}(\Z/(3))$ on $E_{\mr{H}}/A_{\mr{H}}$ is as follows. (We refer to \labelcref{20150206-09-eqn-01} for the additive structure on $E_{\on{H}}[3]$.) \begin{enumerate} \itemsep5pt \item For $\sigma = \mathsf{M}_{1}$, the new level 3 structure $\xi_{\mr{H}}\varphi_{\mathsf{M}_{1}}$ is \[ \begin{bmatrix} [-1:0:1] & [1:-\omega:0] \end{bmatrix} \begin{bmatrix} 1 & 0 \\ 0 & -1 \end{bmatrix} = \begin{bmatrix} [-1:0:1] & [1:-\omega^{2}:0] \end{bmatrix} \] and the scheme morphisms $\alpha_{\mathsf{M}_{1}} : E_{\on{H}} \to E_{\on{H}}$ and $\beta_{\mathsf{M}_{1}} : S_{\mr{H}} \to S_{\mr{H}}$ correspond to the ring homomorphisms sending \begin{align*} \begin{cases} (X,Y,Z) \leftarrow\!\shortmid (X,Y,Z) \\ (\mu,\omega^{2}) \leftarrow\!\shortmid (\mu,\omega) \end{cases} \end{align*} respectively. \item For $\sigma = \mathsf{M}_{2}$, the new level 3 structure $\xi_{\mr{H}}\varphi_{\mathsf{M}_{2}}$ is \[ \begin{bmatrix} [-1:0:1] & [1:-\omega:0] \end{bmatrix} \begin{bmatrix} 1 & 0 \\ -1 & 1 \end{bmatrix} = \begin{bmatrix} [-\omega^{2}:0:1] & [1:-\omega:0] \end{bmatrix} \] and the scheme morphisms $\alpha_{\mathsf{M}_{2}} : E_{\on{H}} \to E_{\on{H}}$ and $\beta_{\mathsf{M}_{2}} : S_{\mr{H}} \to S_{\mr{H}}$ correspond to the ring homomorphisms sending \begin{align*} \begin{cases} (X,Y,\omega^{2} Z) \leftarrow\!\shortmid (X,Y,Z) \\ (\omega\mu,\omega) \leftarrow\!\shortmid (\mu,\omega) \end{cases} \end{align*} respectively. \item For $\sigma = \mathsf{i}$, the new level 3 structure $\xi_{\mr{H}}\varphi_{\mathsf{i}}$ is \[ \begin{bmatrix} [-1:0:1] & [1:-\omega:0] \end{bmatrix} \begin{bmatrix} 0 & -1 \\ 1 & 0 \end{bmatrix} = \begin{bmatrix} [1:-\omega:0] & [0:1:-1] \end{bmatrix} \] and the scheme morphisms $\alpha_{\mathsf{i}} : E_{\on{H}} \to E_{\on{H}}$ and $\beta_{\mathsf{i}} : S_{\mr{H}} \to S_{\mr{H}}$ correspond to the ring homomorphisms sending \begin{align*} \begin{cases} (\omega X + \omega^{2} Y + Z , \omega^{2} X + \omega Y + Z , X+Y+Z) \leftarrow\!\shortmid (X,Y,Z) \\ (\frac{\mu+2}{\mu-1},\omega) \leftarrow\!\shortmid (\mu,\omega) \end{cases} \end{align*} respectively. \end{enumerate} \end{pg}

\begin{remark} According to our convention, the action of $\GL_{2}(\Z/(3))$ on the fiber category $[\Gamma(3)](E_{\on{H}}/\Spec A_{\on{H}})$ is by precomposition, hence the action of $\GL_{2}(\Z/(3))$ on pairs of points on the right hand side of \labelcref{20150206-09-eqn-01} is a \emph{right} action; thus the induced action of $\GL_{2}(\Z/(3))$ on the scheme $\Spec A_{\on{H}}$ is a \emph{right} action (as described in \labelcref{25-eqn-01}) and the corresponding action of $\GL_{2}(\Z/(3))$ on the coordinate ring $A_{\on{H}}$ is a \emph{left} action. \end{remark}

\begin{pg}[Proof of {\Cref{24}}] In fact, it turns out that the identities \begin{align} \label{20150206-30-eqn-01} a_{1}^{3}-27a_{3} &= (3C+9B-1)^{3} \\ \label{20150206-30-eqn-02} a_{3} &= B(6C+9B-1) \end{align} hold in $A_{\on{W}}$ which yields a simpler description \begin{align*} A_{\on{W}} &\simeq \textstyle \Z[\frac{1}{3},B,C,\frac{1}{C} , \frac{1}{3C+9B-1} , \frac{1}{6C+9B-1}]/(C^{2}+3CB+3B^{2}) \end{align*} of $A_{\on{W}}$. (For \labelcref{20150206-30-eqn-01}, write out $a_{1}^{3}-27a_{3}$ in terms of $B,C$ and notice that it is of the form $9C+27B-1$ plus higher order terms; then check that the naive guess works. To see \labelcref{20150206-30-eqn-02}, substitute $C^{2} = -3CB-3B^{2}$ into \labelcref{20150206-30-eqn-04}.) \par We follow the argument of \cite[2.1]{ARTEBANI-DOLGACHEV-THPOPCC}; see also \cite[\S1.4.1, \S1.4.2]{CONNELL-ECH}. Working ``generically'', we will assume that $a_{1}$ is a unit to obtain the coordinate change formula \labelcref{20150206-30-eqn-09}, then observe that it applies also to the case when $a_{1}$ is not a unit. Starting with \begin{align} \label{20150206-30-eqn-05} Y_{1}Z_{1}(Y_{1} + a_{1}X_{1} + a_{3}Z_{1}) = X_{1}^{3} \end{align} we define $X_{2},Y_{2},Z_{2}$ by the system \[ \begin{bmatrix} X_{1} \\ Y_{1} \\ Z_{1} \end{bmatrix} = \begin{bmatrix} u^{2} & & \\ & u^{3} & \\ & & 1 \end{bmatrix} \begin{bmatrix} X_{2} \\ Y_{2} \\ Z_{2} \end{bmatrix} \] where $u = a_{1}/3$ and substitute into \labelcref{20150206-30-eqn-05} to get \begin{align} \label{20150206-30-eqn-06} \textstyle Y_{2}Z_{2}(Y_{2} + 3X_{2} + \frac{27a_{3}}{a_{1}^{3}}Z_{2}) = X_{2}^{3} \;. \end{align} We define $X_{3},Y_{3},Z_{3}$ by the system \[ \begin{bmatrix} 1 & 1 &  \\ 1 &  & \frac{27a_{3}}{a_{1}^{3}} \\ 1 &  &  \end{bmatrix} \begin{bmatrix} X_{2} \\ Y_{2} \\ Z_{2} \end{bmatrix} = \begin{bmatrix} \omega & \omega^{2} &  \\ \omega^{2} & \omega &  \\  &  & 1 \end{bmatrix} \begin{bmatrix} X_{3} \\ Y_{3} \\ Z_{3} \end{bmatrix} \] where $\omega = \frac{C+B}{B}$ \footnote{Since $3$ is invertible, if $x$ is a root of the polynomial $T^{2}+3T+3$ then $x+1$ is a root of the polynomial $T^{2}+T+1$, thus it is natural to take $\frac{C+B}{B}$ as our $\omega$.} and substitute into \labelcref{20150206-30-eqn-06} to get \[ \textstyle (\omega X_{3} + \omega^{2}Y_{3} - Z_{3})(\omega^{2} X_{3} + \omega Y_{3} - Z_{3})(-X_{3}-Y_{3}+Z_{3}) = \frac{27a_{3}}{a_{1}^{3}}Z_{3}^{3} \] or equivalently \begin{align} \label{20150206-30-eqn-07} \textstyle X_{3}^{3} + Y_{3}^{3} + \frac{27a_{3}-a_{1}^{3}}{a_{1}^{3}}Z_{3}^{3} = -3X_{3}Y_{3}Z_{3} \;. \end{align} We know that the coefficient of $Z_{3}^{3}$ in \labelcref{20150206-30-eqn-07} is a cube \labelcref{20150206-30-eqn-01} so we normalize by defining $X_{4},Y_{4},Z_{4}$ by the system \[ \begin{bmatrix} X_{3} \\ Y_{3} \\ Z_{3} \end{bmatrix} = \begin{bmatrix} 1 &  &  \\  & 1 &  \\  &  & \frac{-a_{1}}{3C+9B-1} \end{bmatrix} \begin{bmatrix} X_{4} \\ Y_{4} \\ Z_{4} \end{bmatrix} \] and substitute into \labelcref{20150206-30-eqn-07} to get \begin{align} \label{20150206-30-eqn-08} \textstyle X_{4}^{3}+Y_{4}^{3}+Z_{4}^{3} = 3\frac{a_{1}}{3C+9B-1}X_{4}Y_{4}Z_{4} \;. \end{align} To summarize the above, there is a ring homomorphism $\varphi_{21} : A_{\on{H}} \to A_{\on{W}}$ sending \begin{align} \label{20150206-30-eqn-13} \begin{aligned} &\mu \mapsto \frac{3C-1}{3C+9B-1} \\ &\omega \mapsto \frac{C+B}{B} \end{aligned} \end{align} and solving for $B,C$ in terms of $\mu,\omega$ implies that the inverse $\varphi_{12} : A_{\on{W}} \to A_{\on{H}}$ sends \begin{align} \label{20150206-30-eqn-14} \begin{aligned} B &\mapsto \frac{\mu-1}{3(\omega+2)(\mu-\omega)} \\ C &\mapsto \frac{(\omega-1)(\mu-1)}{3(\omega+2)(\mu-\omega)} \end{aligned} \end{align} where $\omega+2$ is a unit of $A_{\on{H}}$ since $(\omega+2)(\omega-1) = -3$ and $\mu-\omega$ is a unit of $A_{\on{H}}$ since $\mu^{3}-1 = (\mu-1)(\mu-\omega)(\mu-\omega^{2})$. We may check that the product \[ \begin{bmatrix} u^{2} & & \\ & u^{3} & \\ & & 1 \end{bmatrix} \begin{bmatrix} 1 & 1 &  \\ 1 &  & \frac{27a_{3}}{a_{1}^{3}} \\ 1 &  &  \end{bmatrix}^{-1} \begin{bmatrix} \omega & \omega^{2} &  \\ \omega^{2} & \omega &  \\  &  & 1 \end{bmatrix} \begin{bmatrix} 1 &  &  \\  & 1 &  \\  &  & \frac{-a_{1}}{3C+9B-1} \end{bmatrix} \] is ``projectively equivalent'' to the matrix \begin{align} \label{20150206-30-eqn-09} X := \begin{bmatrix} 0 & 0 & \frac{-3}{3C+9B-1} \\[4pt] \omega & \omega^{2} & \frac{3u}{3C+9B-1} \\[4pt] \frac{\omega^{2}}{a_{3}} & \frac{\omega}{a_{3}} & \frac{3u}{a_{3}(3C+9B-1)} \end{bmatrix} \end{align} whose determinant is a unit of $A_{\on{W}}$. Given a section $[s_{X}:s_{Y}:s_{Z}]$ of \labelcref{20150206-30-eqn-05}, the corresponding section of \labelcref{20150206-30-eqn-08} is $X^{-1} \cdot [s_{X}:s_{Y}:s_{Z}]^{\on{T}}$ where \[ X^{-1} = \begin{bmatrix} \frac{-a_{1}}{3} & \frac{B}{C} & \frac{-9CB-18B^{2}-C}{3} \\[4pt] \frac{-a_{1}}{3} & \frac{-B}{C+3B} & \frac{-9CB-9B^{2}+C+3B}{3} \\[4pt] \frac{-3C-9B+1}{3} & 0 & 0 \end{bmatrix} \;. \] The above implies that the sections \[ [0:1:0] \;,\; [0:0:1] \;,\; [C:B+C:1] \] of \labelcref{20150206-30-eqn-05} (i.e. the identity section and ordered basis for the 3-torsion) correspond to the sections \begin{align} \label{20150206-30-eqn-10} [1:-\omega:0] \;,\; [1:-\omega^{2}:0] \;,\; [-1:0:1] \end{align} of \labelcref{20150206-30-eqn-08}. We may apply an automorphism of the pair $(A_{\on{H}},E_{\on{H}}/A_{\on{H}}) \in \ms{M}_{1,1,\Z}$ of the form \labelcref{25}(2) (for $Y$ instead of $Z$) to \labelcref{20150206-30-eqn-10} to get \begin{align} \label{20150206-30-eqn-11} [1:-1:0] \;,\; [1:-\omega:0] \;,\; [-1:0:1] \end{align} and using the fact that there is a simply transitive action of $\GL_{2}(\Z/(3))$ on the set of ordered bases of the 3-torsion in $E_{\on{H}}/A_{\on{H}}$, we may switch the second and third sections of \labelcref{20150206-30-eqn-11} to obtain \begin{align} \label{20150206-30-eqn-12} [1:-1:0] \;,\; [-1:0:1] \;,\; [1:-\omega:0] \end{align} as desired. \qed \end{pg}

\begin{remark} For \labelcref{20150206-30-eqn-01}, see also Stojanoska's derivation \cite[\S4.1]{STOJANOSKA-CDF2PTMF}. \end{remark}

\begin{remark} There are coordinate change formulas in \cite[\S3]{SMART-HESSIAN} transforming a Weierstrass equation into Hesse normal form, but there it is assumed that the base ring is a finite field $\F_{q}$ where $q \equiv 2 \pmod{3}$, in order to take cube roots of $a_{1}^{3}-27a_{3}$, but from this description it is not clear that the cube root is an algebraic function. As shown in \labelcref{20150206-30-eqn-01}, it turns out that in fact $a_{1}^{3}-27a_{3}$ is a cube in the ring $A_{\on{W}}$. One suspects that this is the case after tracing through the proof of \cite[2.1]{ARTEBANI-DOLGACHEV-THPOPCC} and arriving at the equation $x^{3} + y^{3} + \frac{27a_{3}-a_{1}^{3}}{a_{1}^{3}}z^{3} = 3xyz$, in which case we know that $\frac{27a_{3}-a_{1}^{3}}{a_{1}^{3}}$ is a cube by \Cref{20150206-28}. \end{remark}

\begin{lemma} \label{20150206-28} Let $k$ be a field of characteristic not $3$, and let \begin{align} \label{20150206-28-eqn-01} x^{3} + y^{3} + \beta = 3xy \end{align} be a curve in $\A_{k}^{2}$. Suppose that \begin{align} \label{20150206-28-eqn-02} ax + by + c = 0 \end{align} is the tangent line to a flex point of $E$ and suppose that $a^{3} \ne b^{3}$. Then $\beta$ is a cube in $k$. \end{lemma}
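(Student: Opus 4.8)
The plan is to pass to the projective closure and use the defining property of a flex tangent. Homogenizing $x = X/Z$, $y = Y/Z$, the curve becomes $C : X^{3} + Y^{3} + \beta Z^{3} = 3XYZ$ in $\mathbb{P}^{2}_{k}$, and the flex $P$ together with its tangent $\ell : aX + bY + cZ = 0$ now live in $\mathbb{P}^{2}$. The points of $C$ at infinity are the $[t:1:0]$ with $t^{3} = -1$, and $\ell$ passes through one of them exactly when $a^{3} = b^{3}$; hence the hypothesis $a^{3} \neq b^{3}$ forces $\ell \cap C$ to be purely affine. In particular $P$ is an affine point and, being a flex, $\ell$ meets $C$ at $P$ with intersection multiplicity (at least, hence exactly) $3$.

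First I would restrict the affine equation to $\ell$. Assuming $b \neq 0$, substituting $y = -(ax+c)/b$ and clearing $b^{3}$ gives the cubic
\[ q(x) = (b^{3}-a^{3})x^{3} + 3a(b^{2}-ac)x^{2} + 3c(b^{2}-ac)x + (\beta b^{3} - c^{3}), \]
whose leading coefficient is nonzero by hypothesis, so $\deg q = 3$. Since $P$ is an affine flex, the root of $q$ at $P$ has multiplicity $3$, i.e. $q$ is a perfect cube $(b^{3}-a^{3})(x-x_{0})^{3}$.

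Next I would match coefficients. Setting $d := b^{2} - ac$, being a perfect cube amounts to $B^{2} = 3AC$ and $27A^{2}D = B^{3}$ for the coefficients $A,B,C,D$ of $q$. The first relation factors as $d\,(a^{2} d - c(b^{3}-a^{3})) = 0$. If $d \neq 0$ it reduces (using $b\neq 0$) to $c = a^{2}/b$, whence $d = (b^{3}-a^{3})/b$; substituting into the second relation gives $\beta b^{3} = c^{3} + a^{3} d^{3} / (b^{3}-a^{3})^{2} = a^{3}$, so $\beta = (a/b)^{3}$. If $d = 0$ then $q = (b^{3}-a^{3})x^{3} + (\beta b^{3} - c^{3})$, and (using $\on{char} k \neq 3$) a triple root forces $\beta b^{3} = c^{3}$, i.e. $\beta = (c/b)^{3}$. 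The remaining case $b = 0$ (so $a \neq 0$) reduces to the above by the symmetry $(x,y)\mapsto(y,x)$ of $C$, which sends $\ell$ to a line with nonzero $y$-coefficient $a$ and yields $\beta = 0$. In every case $\beta$ is a cube in $k$.

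The difficulty here is bookkeeping rather than depth. The two points requiring care are: justifying that a flex tangent restricts to a one-variable polynomial with a genuine triple root — this is precisely where $a^{3} \neq b^{3}$ is used, since it both places $P$ in the affine chart and keeps $\deg q = 3$ — and disposing of the degenerate branches $d = 0$ and $b = 0$. As a consistency check, this lemma is exactly what licenses the assertion of the preceding remark that $\frac{27a_{3}-a_{1}^{3}}{a_{1}^{3}}$ is a cube in $A_{\on{W}}$.
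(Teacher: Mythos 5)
Your proposal is correct and takes essentially the same route as the paper's proof: substitute the line into the cubic, use the flex hypothesis together with $a^{3} \neq b^{3}$ to conclude that the resulting polynomial of degree exactly $3$ is a perfect cube, and compare coefficients. The differences are purely bookkeeping --- you keep $a,b,c$ general and split on $d = b^{2}-ac$ (handling $b=0$ by the $x \leftrightarrow y$ symmetry), whereas the paper first treats $a=0$, then normalizes $b=-1$ and splits on $ac-1$; your projective interpretation of $a^{3}\neq b^{3}$ as keeping $\ell \cap C$ away from infinity is a nice gloss but not load-bearing.
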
 \begin{proof} If $a = 0$, then $b \ne 0$ and substituting $y = -\frac{c}{b}$ into \labelcref{20150206-28-eqn-01} and rearranging gives $x^{3}+\frac{3c}{b}x - (\frac{c}{b})^{3}+\beta = 0$ which by assumption is of the form $(x+\ell)^{3}$ for some $\ell \in k$. Comparing coefficients, we have $\ell = 0$ and so $\beta = (\frac{c}{b})^{3}$. \par By symmetry we may assume that $a,b \ne 0$. By scaling \labelcref{20150206-28-eqn-02}, we may assume that $b = -1$. Substituting $y=ax+c$ into $E$ gives \[ (a^{3}+1)x^{3} + 3(a)(ac-1)x^{2} + 3(c)(ac-1)x + (c^{3}+\beta) \] and dividing by the leading coefficient gives \[ x^{3} + 3\left(\frac{a(ac-1)}{a^{3}+1}\right)x^{2} + 3\left(\frac{c(ac-1)}{a^{3}+1}\right)x + \left(\frac{c^{3}+\beta}{a^{3}+1}\right) \] and comparing this to \[ x^{3}+3\ell x^{2} + 3\ell^{2}x + \ell^{3} \] gives either $ac-1 = 0$ in which case $c^{3}+\beta = 0$ as well (so that $\beta = (-1/a)^{3} = (-c)^{3}$), otherwise if $ac-1 \ne 0$ then \[ \frac{c}{a} = a \left(\frac{ac-1}{a^{3}+1}\right) \] which implies $c = -a^{2}$ so that the original equation of the tangent line is $y = ax-a^{2}$. Substituting this back into $E$ gives $\beta = (-a)^{3}$. \end{proof}

\section{Higher direct images of sheaves on classifying stacks of discrete groups} \label{sec-06}

The material in this section is standard and we claim no originality.

For a category $\mc{C}$, we denote by $\on{PSh}(\mc{C})$ (resp. $\on{PAb}(\mc{C})$) the category of presheaves (resp. abelian presheaves) on $\mc{C}$. If $\mc{C}$ is a site, we denote by $\on{Sh}(\mc{C})$ (resp. $\on{Ab}(\mc{C})$) the category of sheaves (resp. abelian sheaves) on $\mc{C}$. 

Let $\mc{C}$ be a site, let $G$ be a finite (discrete) group, let $\mathrm{B}G_{\mc{C}}$ be the classifying stack associated to $G$ over $\mc{C}$. Let \[ \pi : \mathrm{B}G_{\mc{C}} \to \mc{C} \] be the projection and let \[ \varphi : \mc{C} \to \mathrm{B}G_{\mc{C}} \] be the canonical section of $\pi$. We view any fibered category $p : \mc{F} \to \mc{C}$ as a site via the Grothendieck topology inherited from $\mc{C}$ via $p$.

\begin{lemma} \label{45} In the setup above, for any abelian sheaf $\ms{F} \in \on{Ab}(\mathrm{B}G_{\mc{C}})$ the higher pushforward $\mathbf{R}^{i}\pi_{\ast}\ms{F}$ is naturally isomorphic to the sheaf associated to the presheaf whose value on an object $U \in \mc{C}$ is $\H^{i}(G,\Gamma(U,\varphi^{\ast}\ms{F}))$. \end{lemma}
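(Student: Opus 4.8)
The plan is to reduce the statement to a computation of derived $G$-invariants and then carry out that computation with the bar resolution. First I would record the standard descent description of abelian sheaves on $\mathrm{B}G_{\mc{C}}$: pulling back along the section $\varphi$, which is the universal $G$-torsor over $\mathrm{B}G_{\mc{C}}$, gives an equivalence of abelian categories $\varphi^{\ast} : \on{Ab}(\mathrm{B}G_{\mc{C}}) \xrightarrow{\sim} \on{Ab}^{G}(\mc{C})$ onto the category of abelian sheaves on $\mc{C}$ equipped with a $G$-action, the action on $\varphi^{\ast}\ms{F}$ being induced by $G = \underline{\Aut}(\text{trivial torsor})$. Under this equivalence the pushforward $\pi_{\ast}$ corresponds to the invariants functor $(-)^{G}$: indeed, for $U \in \mc{C}$ one has $\Gamma(U,\pi_{\ast}\ms{F}) = \Gamma(\mathrm{B}G_{U},\ms{F})$, and a global section over $\mathrm{B}G_{U}$ is precisely a $G$-invariant section of $\varphi^{\ast}\ms{F}$ over $U$, so $\pi_{\ast}\ms{F} = (\varphi^{\ast}\ms{F})^{G}$. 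Since $\varphi^{\ast}$ is an equivalence it identifies derived functors, so it suffices to prove that for every $\mc{G} \in \on{Ab}^{G}(\mc{C})$ the sheaf $\mathbf{R}^{i}(-)^{G}(\mc{G})$ is the sheafification of $U \mapsto \H^{i}(G,\Gamma(U,\mc{G}))$.

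For the computation, let $P_{\bullet} \to \Z$ be the (homogeneous) bar resolution of the trivial module $\Z$ over $\Z[G]$; since $G$ is finite, each $P_{j}$ is a finite free $\Z[G]$-module. Viewing the $P_{j}$ as constant equivariant sheaves and forming
\[ \mc{K}^{\bullet}(\mc{G}) := \underline{\Hom}_{\Z[G]}(P_{\bullet},\mc{G}) , \]
each term $\mc{K}^{j}(\mc{G})$ is a finite product of copies of $\mc{G}$, so $\mc{K}^{\bullet}(-)$ is an exact functor and the $h^{i}\mc{K}^{\bullet}$ form a cohomological $\delta$-functor $T^{\bullet}$ with $T^{0} = (-)^{G}$. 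Taking sections over $U$ and using that $\Gamma(U,-)$ commutes with the finite products appearing, $\Gamma(U,\mc{K}^{\bullet}(\mc{G}))$ is the usual inhomogeneous cochain complex computing $\H^{\bullet}(G,\Gamma(U,\mc{G}))$; hence $T^{i}(\mc{G}) = h^{i}\mc{K}^{\bullet}(\mc{G})$ is, by the definition of cohomology sheaves, the sheafification of $U \mapsto \H^{i}(G,\Gamma(U,\mc{G}))$.

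It remains to identify $T^{\bullet}$ with $\mathbf{R}^{\bullet}(-)^{G}$, for which I would show $T^{\bullet}$ is universal by effacing positive-degree classes. The unit of the induction--coinduction adjunction embeds $\mc{G}$ as a subobject of the coinduced sheaf $\on{Coind}_{1}^{G}(\mc{G}) = \underline{\Hom}_{\Z}(\Z[G],\mc{G})$. Its sections over $U$ form the $G$-module $\on{Coind}_{1}^{G}(\Gamma(U,\mc{G}))$, so Shapiro's lemma gives $\H^{i}(G,\Gamma(U,\on{Coind}_{1}^{G}\mc{G})) = 0$ for $i > 0$; sheafifying, $T^{i}(\on{Coind}_{1}^{G}\mc{G}) = 0$ for $i > 0$. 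Thus $T^{>0}$ is effaceable, the $\delta$-functor $T^{\bullet}$ is universal, and $T^{i} \simeq \mathbf{R}^{i}(-)^{G}$. Combining this with the reduction above (applied to $\mc{G} = \varphi^{\ast}\ms{F}$) yields the claim.

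The main obstacle is the reduction step: one must carefully justify the descent equivalence $\on{Ab}(\mathrm{B}G_{\mc{C}}) \simeq \on{Ab}^{G}(\mc{C})$ for the classifying stack of the finite group $G$ over the general site $\mc{C}$, and verify that $\pi_{\ast}$ really becomes $(-)^{G}$ under it, so that their derived functors agree. Once this structural input is in place, the remaining two steps are routine homological algebra with the bar resolution.
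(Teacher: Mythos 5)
Your proposal is correct, but its computational core takes a genuinely different route from the paper's, so a comparison is worthwhile. Both arguments start from the same reduction: abelian sheaves on $\mathrm{B}G_{\mc{C}}$ are the same as $G$-equivariant abelian sheaves on $\mc{C}$, under which $\pi_{\ast}$ becomes the invariants functor. The paper packages this as the statement that the prestack $\mathrm{P}G_{\mc{C}} \simeq \mc{C} \times [\ast/G]$ has $\mathrm{B}G_{\mc{C}}$ as its stackification and the inclusion induces an equivalence of topoi $\on{Sh}(\mathrm{P}G_{\mc{C}}) \simeq \on{Sh}(\mathrm{B}G_{\mc{C}})$; this is precisely the ``structural input'' you flag as your main obstacle, so that step is standard and is handled in the paper by the prestack observation rather than by torsor descent --- the two formulations are interchangeable. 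After this the proofs diverge. The paper stays at the presheaf level: it uses exactness of sheafification to get $\mathbf{R}\pi_{\ast}(\ms{F}^{\on{sh}}) \simeq (\mathbf{R}\pi^{\on{pre}}_{\ast}\ms{F})^{\on{sh}}$, resolves $\ms{F}$ by injective abelian presheaves, and the key input (\Cref{46}) is that evaluation at $U$ preserves injectives --- proved via the exact left adjoint ``extension by zero'' --- so that the sections over $U$ of the invariants of the resolution compute $\H^{i}(G,\Gamma(U,\ms{F}))$ directly. You instead work with the sheaf-level derived functors of $(-)^{G}$ on $G$-equivariant sheaves: the bar cochain complex of sheaves gives an exact $\delta$-functor whose cohomology sheaves are, by definition, the sheafifications of $U \mapsto \H^{i}(G,\Gamma(U,\mc{G}))$, and you identify this $\delta$-functor with $\mathbf{R}^{\bullet}(-)^{G}$ by effaceability (coinduced sheaves plus Shapiro's lemma) and Grothendieck's universality theorem. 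Your route trades the paper's injective-presheaf bookkeeping and \Cref{46} for standard $\delta$-functor formalism: it makes the sheafification description immediate and needs only Shapiro's lemma as effacement, at the cost of invoking enough injectives in the Grothendieck category of $G$-equivariant abelian sheaves and the universality comparison. Both are complete proofs of \Cref{45}; yours never mentions injective presheaves, while the paper's yields the slightly finer presheaf-level description of $\mathbf{R}\pi^{\on{pre}}_{\ast}\ms{F}$ before sheafifying.
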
 \begin{proof} Let $\mathrm{P}G_{\mc{C}}$ denote the category whose objects are the objects of $\mc{C}$ and where a morphism $X_{1} \to X_{2}$ in $\mathrm{P}G_{\mc{C}}$ is a pair $(\varphi,g)$ where $\varphi \in \on{Mor}_{\mc{C}}(X_{1},X_{2})$ and $g \in G$. (In other words, there is an equivalence of categories $\mathrm{P}G_{\mc{C}} \simeq \mc{C} \times [\ast/G]$ where $[\ast/G]$ is the category with one object $\ast$ and where $\Hom_{[\ast/G]}(\ast,\ast)$ is isomorphic to $G$.) The fibered category $\mathrm{P}G_{\mc{C}}$ is a (separated) prestack whose associated stack is $\mathrm{B}G_{\mc{C}}$, and the inclusion $\mathrm{P}G_{\mc{C}} \to \mathrm{B}G_{\mc{C}}$ induces an equivalence of topoi $\on{Sh}(\mathrm{P}G_{\mc{C}}) \simeq \on{Sh}(\mathrm{B}G_{\mc{C}})$. Hence in the statement of the lemma we may replace $\mathrm{B}G_{\mc{C}}$ by $\mathrm{P}G_{\mc{C}}$ where by abuse of notation we also denote \[ \pi : \mathrm{P}G_{\mc{C}} \to \mc{C} \] the projection morphism. Since sheafification is an exact functor, the diagram \begin{center}\begin{tikzpicture}[>=stealth] 
\matrix[matrix of math nodes,row sep=2em, column sep=2em, text height=1.5ex, text depth=0.25ex] { 
|[name=11]| \on{PAb}(\mathrm{P}G_{\mc{C}}) & |[name=12]| \on{PAb}(\mc{C}) \\ 
|[name=21]| \on{Ab}(\mathrm{P}G_{\mc{C}}) & |[name=22]| \on{Ab}(\mc{C}) \\
}; 
\draw[->,font=\scriptsize]
(11) edge node[above=-1pt] {$\pi^{\on{pre}}_{\ast}$} (12) (21) edge node[below=-1pt] {$\pi_{\ast}$} (22) (11) edge node[left=-1pt] {$\on{sh}$} (21) (12) edge node[right=-1pt] {$\on{sh}$} (22); \end{tikzpicture} \end{center} is (2-)commutative. For the same reason, we have a natural isomorphism \begin{align} \label{45-eqn-01} (\mathbf{R}\pi^{\on{pre}}_{\ast}(\ms{F}))^{\on{sh}} \simeq \mathbf{R}\pi_{\ast}(\ms{F}^{\on{sh}}) \end{align} in $\on{D}^{+}(\on{Ab}(\mc{C}))$ for any abelian presheaf $\ms{F} \in \on{PAb}(\mathrm{P}G_{\mc{C}})$. Presheaves on $\mathrm{P}G_{\mc{C}}$ correspond to presheaves $\ms{F}$ on $\mc{C}$ equipped with a $G$-action, and under this identification $\pi^{\on{pre}}_{\ast}(\ms{F}) = \ms{F}^{G}$ where $\Gamma(U,\ms{F}^{G}) := (\Gamma(U,\ms{F}))^{G}$ for all $U \in \mc{C}$. Let $\ms{F} \in \on{Ab}(\mathrm{P}G_{\mc{C}})$ be an abelian sheaf, and let \[ \ms{F} \to \mc{I}^{0} \to \mc{I}^{1} \to \mc{I}^{2} \to \dotsb \] be a resolution of $\ms{F}$ by injective abelian presheaves $\mc{I}^{i} \in \on{PAb}(\mathrm{P}G_{\mc{C}})$. Then $\mathbf{R}\pi^{\on{pre}}_{\ast}(\ms{F})$ is isomorphic to \begin{align} \label{45-eqn-02} (\mc{I}^{\bullet})^{G} = \{(\mc{I}^{0})^{G} \to (\mc{I}^{1})^{G} \to (\mc{I}^{2})^{G} \to \dotsb\} \end{align} in $\on{D}^{+}(\on{PAb}(\mc{C}))$, and $\Gamma(U,\mathbf{R}\pi^{\on{pre}}_{\ast}(\ms{F}))$ is isomorphic to \begin{align} \label{45-eqn-03} \Gamma(U,(\mc{I}^{\bullet})^{G}) = \{(\Gamma(U,\mc{I}^{0}))^{G} \to (\Gamma(U,\mc{I}^{1}))^{G} \to (\Gamma(U,\mc{I}^{2}))^{G} \to \dotsb\} \end{align} in $\on{D}^{+}(\on{PAb}(\mc{C}))$. Furthermore $\Gamma(U,\mc{I}^{i}) \simeq (i_{U})^{\ast}\mc{I}^{i}$ is an injective $G$-module for all $i$ by \Cref{46}, thus we have an isomorphism \[ h^{i}(\Gamma(U,(\mc{I}^{\bullet})^{G})) \simeq \H^{i}(G,\Gamma(U,\ms{F})) \] of abelian groups. \end{proof}

\begin{lemma} \label{46} Let $\mc{C}$ be a category, let $U \in \mc{C}$ be an object, let $\mc{A}_{\mc{C},U}$ denote the full subcategory of $\mc{C}$ containing exactly $U$, and let $i_{U} : \mc{A}_{\mc{C},U} \to \mc{C}$ denote the inclusion. The inverse image functor $(i_{U})^{\ast} : \on{PAb}(\mc{C}) \to \on{PAb}(\mc{A}_{\mc{C},U})$ preserves injectives. \end{lemma}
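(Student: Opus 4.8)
The plan is to realize $(i_{U})^{\ast}$ as the right adjoint of a left Kan extension and then to apply the standard principle that a right adjoint preserves injectives as soon as its left adjoint preserves monomorphisms. Write $E := \End_{\mc{C}}(U)$. Since $\mc{A}_{\mc{C},U}$ is the one-object category with endomorphism monoid $E$, an abelian presheaf on it is exactly an abelian group with a contravariant $E$-action, i.e.\ a right $\Z[E]$-module, and under this identification $(i_{U})^{\ast}\ms{F} = \ms{F}(U)$. First I would write down the left adjoint $(i_{U})_{!}$ of $(i_{U})^{\ast}$: by the coend formula for left Kan extension along $i_{U}$ it is given objectwise by
\[ ((i_{U})_{!}M)(Y) = M \otimes_{\Z[E]} \Z[\Hom_{\mc{C}}(Y,U)], \]
where $\Z[\Hom_{\mc{C}}(Y,U)]$ is a left $\Z[E]$-module via post-composition and the presheaf structure in $Y$ is by pre-composition. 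The adjunction $\Hom_{\on{PAb}(\mc{C})}((i_{U})_{!}M,\ms{F}) \simeq \Hom_{\Z[E]}(M,\ms{F}(U))$ is then a direct verification (and restricting back recovers $((i_{U})_{!}M)(U) = M \otimes_{\Z[E]} \Z[E] = M$, as it must for the fully faithful $i_{U}$).

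With this in hand the preservation of injectives is formal: given an injective $\ms{I} \in \on{PAb}(\mc{C})$, a monomorphism $M \hookrightarrow N$ of right $\Z[E]$-modules, and a map $M \to (i_{U})^{\ast}\ms{I}$, I would pass to the adjoint $(i_{U})_{!}M \to \ms{I}$, extend it along $(i_{U})_{!}M \to (i_{U})_{!}N$ using injectivity of $\ms{I}$, and take adjoints again to produce the required extension $N \to (i_{U})^{\ast}\ms{I}$. The only input this argument needs is that $(i_{U})_{!}M \to (i_{U})_{!}N$ is again a monomorphism, so the entire statement reduces to showing that $(i_{U})_{!}$ preserves monomorphisms.

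This last reduction is the crux, and is where I expect the real work to lie. Because $(i_{U})_{!}$ is computed objectwise by the displayed tensor product and monomorphisms of presheaves are detected objectwise, $(i_{U})_{!}$ preserves monomorphisms if and only if $\Z[\Hom_{\mc{C}}(Y,U)]$ is a flat left $\Z[E]$-module for every object $Y$. I would establish this flatness by exhibiting $\Hom_{\mc{C}}(Y,U)$ as a \emph{free} left $E$-set under post-composition, so that $\Z[\Hom_{\mc{C}}(Y,U)]$ is a free, hence flat, $\Z[E]$-module. This freeness is the one genuinely substantive point, and it is exactly the hypothesis to watch: it says that post-composition by $\End_{\mc{C}}(U)$ has no nontrivial fixed points on each $\Hom_{\mc{C}}(Y,U)$, and it is \emph{not} automatic for a completely arbitrary $\mc{C}$ (if some endomorphism of $U$ fixes a morphism $Y\to U$, the corresponding $\Z[E]$-module acquires a non-flat summand and the restriction of a coinduced injective fails to be injective). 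The key observation that makes the lemma usable is that this freeness does hold in the situation of \Cref{45}: there the ambient category is the prestack $\mathrm{P}G_{\mc{C}}$, every morphism into $U$ carries a $G$-component on which $G \subseteq \Aut(U)$ acts by free left translation, so the relevant Hom-sets are free as $G$-sets and the restriction one actually needs — injectivity as a $G$-module — follows. I would therefore carry out the general reduction above verbatim and isolate, as the single load-bearing step, the freeness of the post-composition action, checking it in the form required by \Cref{45}.
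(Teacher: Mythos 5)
Your route is not just different from the paper's --- it is sounder, and your caution at the final step is fully warranted. The paper's own proof asserts that $(i_{U})^{\ast}$ has an exact left adjoint given by ``extension by zero'' ($\Gamma(V,i_{U,\dagger}M) = M$ if $V = U$ and $0$ otherwise). This fails on two counts. First, $i_{U,\dagger}M$ need not even be a presheaf: if some endomorphism of $U$ factors as $g \circ h$ with $h : U \to V$, $g : V \to U$, $V \neq U$, functoriality forces that endomorphism to act by zero on $M$ (e.g.\ $g \circ h = \id_{U}$ forces $M = 0$). Second, even when it is a presheaf, it is not left adjoint to restriction: a morphism $i_{U,\dagger}M \to \ms{F}$ is a $\Z[E]$-map $M \to \ms{F}(U)$ whose composite with every restriction $\ms{F}(U) \to \ms{F}(V)$ along a morphism $V \to U$ with $V \neq U$ vanishes, which is in general a proper subgroup of $\Hom_{\Z[E]}(M,\ms{F}(U))$. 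The genuine left adjoint is the left Kan extension you write down, $((i_{U})_{!}M)(Y) = M \otimes_{\Z[E]} \Z[\Hom_{\mc{C}}(Y,U)]$, and its exactness is exactly the flatness question you isolate.

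Moreover, your suspicion that the statement fails for completely arbitrary $\mc{C}$ is correct, so no proof of the lemma as literally stated exists. Take $\mc{C}$ with two objects $U,Y$, with $\End(U) = \{\id_{U},\sigma\}$, $\sigma^{2} = \id_{U}$, $\Hom(Y,U) = \{f\}$ (so necessarily $\sigma f = f$), $\Hom(U,Y) = \emptyset$, $\End(Y) = \{\id_{Y}\}$. A presheaf is a triple $(M,N,\phi : M \to N)$ with $M$ a $\Z[\Z/(2)]$-module and $\phi \circ \sigma = \phi$. Evaluation at $Y$ is exact and one checks directly that it has right adjoint $N \mapsto (N_{\mr{triv}},N,\id)$, which therefore preserves injectives; applying it to $\Q/\Z$ gives an injective presheaf whose restriction to $\mc{A}_{\mc{C},U}$ is $\Q/\Z$ with trivial $\Z/(2)$-action, and this is not injective over $\Z[\Z/(2)]$ since $\on{Ext}^{1}_{\Z[\Z/(2)]}(\Z,\Q/\Z) = \H^{1}(\Z/(2),\Q/\Z) = \Z/(2) \neq 0$. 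This is precisely the failure mode you predicted (an endomorphism of $U$ fixing a morphism $Y \to U$ under post-composition). Your salvage is also exactly what \Cref{45} needs: there one only requires that $\ms{F} \mapsto \Gamma(U,\ms{F})$ preserve injectives as a functor to $\Z[G]$-modules, i.e.\ restriction along the functor from the one-object category with morphism group $G$ to $\mathrm{P}G_{\mc{C}}$ sending the object to $U$ and $g$ to $(\id_{U},g)$; since $\Hom_{\mathrm{P}G_{\mc{C}}}(Y,U) = \Mor_{\mc{C}}(Y,U) \times G$ is a free left $G$-set, the left adjoint is objectwise a direct sum of copies of $M$, hence exact. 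So carry out your argument as written, but state the lemma with your freeness (or flatness) hypothesis on the post-composition action --- or state directly the $G$-module version used in \Cref{45} --- rather than for an arbitrary category.
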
 \begin{proof} The functor $(i_{U})^{\ast} : \on{PAb}(\mathrm{P}G_{\mc{C}}) \to \on{PAb}(\mc{A}_{\mc{C},U})$ has an exact left adjoint, namely the ``extension by zero'' functor $i_{U,!} : \on{PAb}(\mc{A}_{\mc{C},U}) \to \on{PAb}(\mathrm{P}G_{\mc{C}})$ which sends $M \in \on{PAb}(\mc{A}_{\mc{C},U})$ to the abelian presheaf $i_{U,\dagger}(M)$ where $\Gamma(V,i_{U,\dagger}(M)) = M$ if $V = U$ and $0$ otherwise (with the only nontrivial restriction morphisms being those corresponding to the endomorphisms of $U$).  \end{proof}

\section{Computation using Magma} \label{sec-07}

We compute $\H^{1}(\GL_{2}(\Z/(3)),M)$ in \Cref{04-03} using \textsc{Magma} \cite{BOSMA1997235}. Here \texttt{G} is defined as the subgroup of $\GL_{2}(\Z/(3))$ generated by the matrices in \labelcref{04-eqn-08}, but the specified matrices constitute a generating set so in fact \texttt{G} $ = \GL_{2}(\Z/(3))$. The group \texttt{G} acts on the abelian group $M = (\Z/(2))^{\oplus 6}$ by the three specified elements of $\on{Mat}_{6 \times 6}(\Z)$, where each $\mb{x} \in M$ is viewed as a horizontal vector and each $6 \times 6$ matrix $\mb{A}$ acts on $M$ by right multiplication $\mb{x} \mapsto \mb{x} \cdot \mb{A}$. The last line computes $\H^{1}(G,(\Z/(2))^{\oplus 6})$.

{\small
\begin{verbatim}
G := MatrixGroup< 2 , FiniteField(3) |
  [ 1,0 , -1,1 ] , [ 0,-1 , 1,0] , [ 1,0 , 0,-1 ] 
>;
mats := [
  Matrix(Integers() , 6 , 6 , [
    0, 0, 1, 0, 0, 0 ,
    1, 0, 0, 0, 0, 0 ,
    0, 1, 0, 0, 0, 0 ,
    0, 0, 0, 0, 1, 0 ,
    0, 0, 0, 0, 0, 1 ,
    0, 0, 0, 1, 0, 0 ]) ,
  Matrix(Integers() , 6 , 6 , [
    1, 0, 0, 0, 0, 0 ,
    1, 0, 1, 0, 0, 0 ,
    1, 1, 0, 0, 0, 0 ,
    0, 0, 0, 1, 0, 0 ,
    0, 0, 0, 1, 0, 1 ,
    0, 0, 0, 1, 1, 0 ]) ,
  Matrix(Integers() , 6 , 6 , [
    0, 0, 0, 1, 0, 0 ,
    0, 0, 0, 0, 1, 0 ,
    0, 0, 0, 0, 0, 1 ,
    1, 0, 0, 0, 0, 0 ,
    0, 1, 0, 0, 0, 0 ,
    0, 0, 1, 0, 0, 0 ])
];
CM := CohomologyModule(G,[2,2,2,2,2,2],mats);
CohomologyGroup(CM,1);
\end{verbatim}
}

\bibliography{../allbib.bib}
\bibliographystyle{alpha}

\end{document}